\newtheorem{thm}{Theorem}[section]
\newtheorem{prop}{Proposition}[section]
\newtheorem{df}{Definition}[section]
\newtheorem{lem}{Lemma}[section]
\newtheorem{cor}{Corollary}[section]
\newtheorem{dflem}{Definition-Lemma}[section]
\newtheorem{ex}{Example}[section]
\newtheorem{conj}{Conjecture}[section]
\newtheorem{rem}{Remark}[section]
\newenvironment{dem}{\paragraph{Proof}}
{\begin{flushright}$\Box$\end{flushright}}
\newcommand{\Z}{\mathbb{Z}}
\newcommand{\Q}{\mathbb{Q}}
\newcommand{\C}{\mathbb{C}}
\newcommand{\Ss}{\mathbb{S}}
\newcommand{\R}{\mathbb{R}}
\newcommand{\F}{\mathbb{F}}
\newcommand{\AK}{\mathbb{A}_{K}}
\newcommand{\AF}{\mathbb{A}_{F}}
\newcommand{\AFf}{\mathbb{A}_{F,f}}
\newcommand{\AFp}{\mathbb{A}_{F^{+}}}
\newcommand{\AQ}{\mathbb{A}_{\mathbb{Q}}}
\newcommand{\AQf}{\mathbb{A}_{\mathbb{Q},f}}
\newcommand{\lieG}{\mathfrak{g}}
\newcommand{\lieK}{\mathfrak{k}}
\newcommand{\lieP}{\mathfrak{p}}
\newcommand{\lieH}{\mathfrak{h}}
\def\<{\langle}
\def\>{\rangle}
\def\R{\mathbb R}
\def\Z{\mathbb Z}
\def\Q{\mathbb Q}
\def\C{\mathbb C}
\def\F{\mathbb F}
\def\cm{F}
\def\Acm{\mathbb{A}_{F}}
\def\Atr{\mathbb{A}_{F^{+}}}
\title{Factorization of arithmetic automorphic periods}
\author{Jie LIN}
\begin{document}
\thanks{ J.L. was supported by the European Research Council under the European Community's Seventh Framework Programme (FP7/2007-2013) / ERC Grant agreement no. 290766 (AAMOT)}
\subjclass[2010]{11F67 (Primary) 11F70, 11G18,  22E55 (Secondary). }
\maketitle

\begin{abstract}
In this paper, we prove that the arithmetic automorphic periods for $GL_{n}$ over a CM field factorize through the infinite places. This generalizes a conjecture of Shimura in 1983, and is predicted by the Langlands correspondence between automorphic representations and motives.
\end{abstract}

\tableofcontents

\section*{Introduction}

\text{}
The aim of this paper is to prove the factorization of arithmetic automorphic periods defined as Petersson inner products of arithmetic automorphic forms on unitary Shimura varieties. This generalizes a conjecture of Shimura (c.f. Conjecture $5.12$ of \cite{shimura83}).

We first introduce the conjecture of Shimura to illustrate our main result. Let $E$ be a totally real field of degree $d$. Let $J_{E}$ be the set of real embeddings of $E$. 
Let $f$ be an arithmetic Hilbert cusp form inside a cuspidal automorphic representation $\pi$ of $GL_{2}(\mathbb{A}_{E})$. We define the period $P(\pi)$ as the Petterson inner product of $f$. One can show that up to multiplication by an algebraic number, the period $P(\pi)$ does not depend on the choice of $f$.

For each $\sigma\in J_{E}$, Shimura conjectured the existence of a complex number $P(\pi,\sigma)$, associated to a quaternion algebra which is split at $\sigma$ and ramified at other infinite places, such that:
\begin{equation}\label{Shimura 1}
P(\pi)\sim \prod\limits_{\sigma\in J_{E}}P(\pi,\sigma)
\end{equation}
where the relation $\sim$ means equality up to multiplication by an algebraic number.

Furthermore, if $D$ is a quaternion algebra and $\pi$ admets a Jacquet-Langlands transfer $\pi_{D}$ to $D$, we may define $P(\pi_{D})$ as Petersson inner product of an algebraic form in $\pi_{D}$, and Shimura conjectured that:
\begin{equation}\label{Shimura 2}
P(\pi_{D})\sim \prod\limits_{\sigma\in J_{E},\text{ split for }D}P(\pi,\sigma).\end{equation}

This conjecture was proved under some local hypotheses in an important paper of M. Harris (c.f. \cite{harrisunitary}) and was improved by H. Yoshida (c.f. \cite{yoshidashimuraconjecture}). The paper of M. Harris is very long and involves many techniques which seems extremely difficult to generalize. In this paper, we prove a generalization of Shimura's conjecture (c.f. Conjecture \ref{factorization conjecture}) by a new and simpler method.

We consider representations of $GL_{n}(\Acm)$ where $F$ is a CM field. We write $J_{F}$ for the set of complex embeddings of $F$. We fix $\Sigma$ a CM type of $F$, i.e., $\Sigma\subset J_{F}$ presents $J_{F}$ modulo the action of complex conjugation.

Let $F^{+}$ be the maximal totally real subfield of $F$. Instead of the quaternions algebras, we consider unitary groups of rank $n$ with respect to $F/F^{+}$. They are all inner forms of $GL_{n}(\AF^{+})$.  

We use $I$ to denote the signature of a unitary group. It can be considered as a map from $\Sigma$ to $\{0,1,\cdots,n\}$. For each $I$, let $U_{I}$ be a unitary group of signature $I$. We note that $U_{I,F}\cong GL_{n,F}$ as algebraic group over $F$. In particular, we have $U_{I}(\Acm)\cong GL_{n}(\Acm)$. We assume that $\Pi$, considered as a representation of $U_{I}(\Acm)$, descends by base change to $U_{I}(\Atr)$. We refer to \cite{arthurtraceformula}, \cite{harrislabesse}, \cite{labesse} or \cite{endoscopicfour} for details of base change.

We can then define a period $P^{(I)}(\Pi)$ as Petersson inner product of an algebraic automorphic form in the bottom degree of cohomology of the similitude unitary Shimura variety attached to $U_{I}$. The construction is given in section $2$.

 \begin{conj}
 There exists some non zero complex numbers $P^{(s)}(\Pi,\sigma)$ for all $0\leq s\leq n$ and $\sigma\in\Sigma$ such that 
 \begin{equation}\label{main conjecture intro}
 P^{(I)}(\Pi) \sim_{E(\Pi)} \prod\limits_{\sigma\in\Sigma}P^{(I(\sigma))}(\Pi,\sigma)
 \end{equation} for any $I=(I(\sigma))_{\sigma\in\Sigma}\in \{0,1,\cdots,n\}^{\Sigma}$.
  \end{conj}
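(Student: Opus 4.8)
The plan is to argue by induction on $n$. For $n=1$ the unitary group has rank one, its Shimura variety is zero-dimensional, and for an algebraic Hecke character $\chi$ of $F$ the period $P^{(I)}(\chi)$ is, up to $E(\chi)$-rational multiples, a product over $\sigma\in\Sigma$ of CM periods determined by the value $I(\sigma)\in\{0,1\}$, so the factorization is the theorem of Blasius on CM periods, refining Shimura. For general $n$ I would first reformulate \eqref{main conjecture intro}: since $\C^{\times}/\overline{\Q}^{\times}$ is a $\Q$-vector space (it is divisible and torsion-free), the existence of a decomposition $P^{(I)}(\Pi)\sim\prod_{\sigma}P^{(I(\sigma))}(\Pi,\sigma)$ is equivalent to the vanishing of all mixed second differences, i.e.\ to the statement that whenever $I,I'$ differ at a single place $\sigma_{0}$, the ratio $P^{(I)}(\Pi)/P^{(I')}(\Pi)$ depends only on $\sigma_{0}$ and on the pair $(I(\sigma_{0}),I'(\sigma_{0}))$, not on the common values at the other places. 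Granting this, the local periods are produced by telescoping along a path from a fixed base signature, and the refinement from $\overline{\Q}$-rational to $E(\Pi)$-rational relations is obtained by tracking rationality fields at each step.

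The one-place ratios will be extracted from Rankin--Selberg $L$-values. With $\Pi$ as above (in particular conjugate self-dual and cohomological), choose an auxiliary cuspidal cohomological conjugate self-dual $\Pi'$ on $GL_{n-1}(\Acm)$; for maximal flexibility I would take $\Pi'$ to be the automorphic induction of an algebraic Hecke character $\eta$ of an $(n-1)$-dimensional CM extension $F'/F$, whose infinity type we are free to prescribe. Then $\Pi\times\Pi'$ is conjugate self-dual and cohomological and $s=\tfrac12$ is critical for $L(s,\Pi\times\Pi')$. The input I would use — the known special value formulas for Rankin--Selberg $L$-functions of $GL_{n}\times GL_{n-1}$ over CM fields (Harris, and subsequent refinements, including the author's earlier work on the $GL_{n}$ case over CM fields) — gives
\begin{equation*}
L(\tfrac12,\Pi\times\Pi')\;\sim_{E(\Pi)E(\Pi')}\;(2\pi i)^{\beta(\Pi,\Pi')}\;P^{(I(\Pi,\Pi'))}(\Pi)\cdot\prod_{\sigma\in\Sigma}\big(\text{CM periods of }\eta\text{ at }\sigma\big),
\end{equation*}
where the signature $I(\Pi,\Pi')(\sigma)$ is determined, placewise, by the interleaving of the infinity types of $\Pi$ and of $\Pi'$ at $\sigma$ (essentially the number of Hodge--Tate weights of $\Pi'$ lying above the middle weight of $\Pi$) and $\beta(\Pi,\Pi')$ is an explicit integer. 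Varying $\eta$ moves the infinity type of $\Pi'$ at each place independently, hence moves $I(\Pi,\Pi')$; choosing two Hecke characters whose infinity types agree away from one place $\sigma_{0}$ and dividing the two formulas isolates a one-place ratio $P^{(I)}(\Pi)/P^{(I')}(\Pi)$ in terms of a power of $2\pi i$, a ratio of $L$-values, and the CM periods of the two characters.

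To see that such a ratio depends only on $\sigma_{0}$ and $(I(\sigma_{0}),I'(\sigma_{0}))$, I would invoke the factorization through the infinite places of the archimedean periods attached to $M(\Pi)\otimes M(\Pi')$. For motives over a CM field, Deligne's formalism gives a decomposition $M\otimes_{F}\C=\prod_{\sigma}M_{\sigma}$ and a corresponding factorization of the relevant Deligne period into local contributions indexed by $\sigma$; combined with the cases of Deligne's conjecture available for these conjugate self-dual $GL_{n}\times GL_{n-1}$ $L$-functions, this shows that $L(\tfrac12,\Pi\times\Pi')$ is itself, up to $E$-rational multiples and an explicit power of $2\pi i$, a product over $\sigma\in\Sigma$ of local archimedean periods depending only on the Hodge data at $\sigma$. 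Feeding in the factorization of the CM periods of $\eta$ — the base case $n=1$ applied over $F'$ — one concludes that $P^{(I(\Pi,\Pi'))}(\Pi)$ is, modulo $\overline{\Q}^{\times}$, a product over $\sigma$ of local terms each depending only on $\sigma$ and on $I(\Pi,\Pi')(\sigma)$. Hence the one-place ratios have the required form, the local periods $P^{(s)}(\Pi,\sigma)$ are well defined — their independence of the auxiliary $\eta$ being exactly the consistency of this decomposition, checked by comparing two admissible auxiliaries — and the telescoping of the first paragraph yields \eqref{main conjecture intro}, with the $E(\Pi)$-rational precision tracked throughout.

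The main obstacle I anticipate is twofold. First, realizing all the signatures: the conjugate self-duality of $\Pi'$ forces its infinity type at each place to be symmetric about the centre, so a single automorphic induction from $GL_{1}$ produces only a restricted family of interleaving indices $I(\Pi,\Pi')(\sigma)$. In particular the extremal cases $I(\sigma)\in\{0,n\}$ — the holomorphic and anti-holomorphic periods — and, more generally, enough signatures to make the telescoping run, have to be reached by a genuinely different route: combining several auxiliary representations along a chain of CM extensions, exploiting the functional equation and the duality relating $P^{(I)}(\Pi)$ to periods of $\Pi^{\vee}$, and analysing the constant signatures separately through the Petersson norms of holomorphic forms on definite unitary groups. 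Second — and this is the delicate bookkeeping — one must match the archimedean data on the two sides, so that the power $(2\pi i)^{\beta(\Pi,\Pi')}$ and the local Deligne periods combine to let the contribution of $\Pi$ at each place separate cleanly from that of $\eta$; this forces one to track the split-index combinatorics of the infinity types (the position of the middle Hodge--Tate weight) precisely, and to establish the existence of conjugate self-dual cohomological Hecke characters over the extensions $F'$ with the prescribed infinity types. Of the two, securing the full range of signatures is the part I would expect to demand the most new ideas.
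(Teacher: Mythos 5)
Your combinatorial reduction — factorization is equivalent to the vanishing of mixed second differences, and it suffices to treat swaps at a single place $\sigma_{0}$ — is exactly the paper's Corollary~\ref{factorization lemma} and Remark~\ref{remark factorization}. But the way you propose to verify the one-place swap identity has two genuine problems.

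First, your choice of auxiliary $\Pi'$ as a \emph{cuspidal} automorphic induction from an $(n-1)$-dimensional CM extension $F'/F$ gives you less control, not more. The paper's key trick is precisely that the $GL_{n-1}$ factor in the Rankin--Selberg theorem (Theorem~\ref{Whittaker period theorem CM}) need not be cuspidal, so one may take $\Pi^{\#}$ to be a Langlands sum of $n-1$ \emph{independent} algebraic Hecke characters of $F$ itself, with independently prescribable infinity types at each place. The resulting $L$-function then factors as $\prod_{u}L(\tfrac12+m,\Pi\times\chi_{u})$, each factor handled by the $GL_{n}\times GL_{1}$ period formula (Theorem~\ref{n*1}), while $p(\Pi^{\#})$ decomposes into abelian $L$-values handled by Blasius. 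Your automorphic-induction route constrains the infinity type by Galois conjugation over $F$ and by conjugate self-duality, and — as you yourself note — does not reach a wide enough family of signatures. You diagnose this weakness but do not repair it.

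Second, and more seriously, your middle step leans on a motivic factorization: ``Deligne's formalism gives a decomposition $M\otimes_{F}\C=\prod_{\sigma}M_{\sigma}$ and a corresponding factorization of the Deligne period,'' combined with ``cases of Deligne's conjecture available for these $GL_{n}\times GL_{n-1}$ $L$-functions.'' This is not an available input: Deligne's conjecture is not proven for these Rankin--Selberg $L$-functions, and the factorization of the Deligne period through the infinite places for $GL_n$ is essentially equivalent to the statement you are trying to prove. The argument is circular at this point. The paper avoids motives entirely and works only with proven automorphic period relations (the Whittaker-period/Rankin--Selberg theorem, the $GL_n\times GL_1$ theorem of \cite{guerberofflin}, and Blasius).

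Third, the extremal signatures $I(\sigma)\in\{0,n\}$: you flag this as the hardest point, correctly, but leave it unresolved. The paper resolves it with a genuinely new construction (Section~\ref{complete case}): take $\Pi^{\#}$ to be the $GL_{n+2}$ Langlands sum $\Pi\boxplus\chi_{1}^{c}\boxplus\chi_{2}^{c}$ (now containing $\Pi$ itself as a summand) and pair it against a fresh cuspidal conjugate self-dual $\Pi^{\diamondsuit}$ on $GL_{n+3}$ in good position. Dividing the resulting formula for $\Pi^{\#}$ by the one for $\Pi^{\#\#}$ (the swapped version) cancels everything except the desired ratio, and the $\Pi^{\diamondsuit}$-contribution is shown to be controlled either by the restricted factorization theorem applied to $\Pi^{\diamondsuit}$ (whose signatures stay strictly inside $\{1,\dots,n+2\}$) or by taking $\Pi^{\diamondsuit}$ to be an automorphic induction. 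Your sketch contains no analogue of this escalation to $GL_{n+3}$, and ``combining several auxiliary representations along a chain of CM extensions'' plus ``exploiting duality'' is not a substitute without more detail.

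In short: the reduction lemma matches, but your verification of the swap relation rests on an unproved motivic factorization and a less flexible auxiliary family, and the extremal-signature case — the real novelty of the paper — is left as an acknowledged hole.
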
  
  
  Our main theorem is the following (c.f. Theorem \ref{complete theorem}):
  
  \begin{thm}
The above conjecture is true provided that $\Pi$ is $2$-regular with a global non vanishing condition which is automatically satisfied if $\Pi$ is $6$-regular.
\end{thm}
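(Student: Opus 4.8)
The plan is to deduce the factorization from two facts established in the previous sections: the classical factorization of CM periods through the infinite places, and special value formulas expressing critical values of the Rankin--Selberg $L$-functions $L(s,\Pi\times\chi)$, for $\chi$ ranging over algebraic Hecke characters of $F$, in terms of a single arithmetic automorphic period $P^{(I)}(\Pi)$ together with CM periods of $\chi$. Schematically these formulas have the shape
\[
L(m+\tfrac12,\Pi\times\chi)\ \sim\ (2\pi i)^{\beta}\,p(\chi)\,P^{(I(m,\chi))}(\Pi),
\]
where the exponent $\beta$ and the signature $I(m,\chi)\in\{0,1,\dots,n\}^{\Sigma}$ are determined explicitly, and place by place, by the critical integer $m$ and the infinity type of $\chi$ --- here one uses that $\Pi$ is cohomological and conjugate self-dual up to a twist, so that the archimedean Hodge--theoretic bookkeeping is available and $\check\Pi$ is again of the same type. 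Since $p(\chi)\sim\prod_{\sigma\in\Sigma}p(\chi,\sigma)$ and the archimedean $L$-factors together with the power $(2\pi i)^{\beta}$ are themselves products over $\Sigma$, the right-hand side is a product of purely local quantities times the one genuinely global factor $P^{(I(m,\chi))}(\Pi)$; proving the factorization of the latter amounts to showing that, after dividing out the CM periods and the powers of $2\pi i$, what is left is ``as local as the combinatorics permits''.

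To extract this I would proceed as follows. First, using the regularity hypothesis, one shows that every signature $I$ is realized as $I(m,\chi)$ for some pair $(m,\chi)$ with $L(m+\tfrac12,\Pi\times\chi)\neq 0$: $2$-regularity is what makes every $I$ reachable by a suitable pair, while the stronger $6$-regularity furnishes enough critical points and enough room in the conductor and infinity type of $\chi$ to force the non-vanishing as well --- the latter being, in general, the extra ``global non vanishing condition'' in the statement. Next, one plays these relations against one another: given two pairs whose signatures differ only at a single place $\sigma_{0}$, one chooses the two Hecke characters to have identical local data away from $\sigma_{0}$ and relates the two $L$-values through the functional equation, whose archimedean constituents are again products of local factors and powers of $2\pi i$; the CM-period contributions then cancel except for a factor depending only on $\sigma_{0}$, yielding a relation
\[
\frac{P^{(I)}(\Pi)}{P^{(I')}(\Pi)}\ \sim_{E(\Pi)}\ \bigl(\text{a quantity depending only on }\sigma_{0},\ I(\sigma_{0}),\ I'(\sigma_{0})\bigr).
\]
Running over all places and all coordinate changes, and normalizing one base signature to absorb the overall constant, produces nonzero complex numbers $P^{(s)}(\Pi,\sigma)$ with $P^{(I)}(\Pi)\sim_{E(\Pi)}\prod_{\sigma\in\Sigma}P^{(I(\sigma))}(\Pi,\sigma)$ for all $I$, which is the conjecture.

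It is cleanest to separate out the purely formal part of this bookkeeping as a ``Meta-Lemma'': a family $(A(I))_{I\in\{0,\dots,n\}^{\Sigma}}$ of nonzero complex numbers satisfying, for ``enough'' $I$, relations of the form $A(I)\sim C\cdot\prod_{\sigma\in\Sigma}B_{\sigma}(I(\sigma))$ with $C$ independent of $I$, necessarily factors through $\Sigma$. The work is then to verify that, once the local quantities are stripped off as above, the special value formulas put $A(I)=P^{(I)}(\Pi)$ into the hypotheses of this lemma --- this requires pinning down exactly which signatures $I(m,\chi)$ occur (i.e.\ matching the Shimura-variety computation of $\beta$ and $I(m,\chi)$ with the combinatorics of CM types), and checking that the resulting relations are numerous and mutually consistent enough to determine all the $P^{(s)}(\Pi,\sigma)$, not merely ratios within a fixed ``slice''.

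The main obstacle --- visible already in the hypotheses of the theorem --- is the non-vanishing of the values $L(m+\tfrac12,\Pi\times\chi)$ at the prescribed critical points for Hecke characters $\chi$ of rather constrained infinity type; without it the relations above are empty. For $6$-regular $\Pi$ I would deduce this from known non-vanishing results for Rankin--Selberg $L$-functions of $GL_{n}$ twisted by Hecke characters (twisting with characters of sufficiently large conductor), after checking that the extra regularity leaves enough freedom to realize every required signature with such a twist, so that the condition becomes automatic; in the merely $2$-regular case one retains it as the stated global hypothesis. A subsidiary difficulty, more bookkeeping than substance, is the precise archimedean computation of $\beta$ and of the map $(m,\chi)\mapsto I(m,\chi)$, together with checking that the functional equation, the relations between the periods of $\Pi$ and of $\check\Pi$, and the boundary cases $I\equiv 0$ and $I\equiv n$ are all compatible with the proposed factorization.
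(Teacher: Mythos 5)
Your reduction to the formal ``Meta-Lemma'' is exactly the paper's Corollary 3.1 together with Remark 3.1, so that part is sound. The gap is in how you propose to produce the swap relations $P^{(I_1)}(\Pi)P^{(I_2)}(\Pi)\sim P^{(I_1')}(\Pi)P^{(I_2')}(\Pi)$. You plan to derive them solely from the $GL_n\times GL_1$ special-value formula (Theorem 2.5 in the paper) together with ``the functional equation''. This cannot work for two reasons. First, for a fixed Hecke character $\chi$ the signature $I(\Pi,\chi)$ in Theorem 2.5 depends only on the infinity type of $\chi$ (relative to that of $\Pi$), not on the critical point $m$; varying $m$ just rescales by powers of $2\pi i$ and gives no relation between $P^{(I)}(\Pi)$ for distinct $I$. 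Second, the functional equation relates $L(s,\Pi\times\chi)$ to $L(1-s,\Pi^\vee\times\chi^\vee)$, i.e.\ values of the \emph{same} $L$-function (after dualizing); it does not relate $L(\Pi\times\chi)$ to $L(\Pi\times\chi')$ for a \emph{different} character $\chi'$. Even taking duality into account one only relates $P^{(I)}(\Pi)$ to $P^{(I^c)}(\Pi^\vee)$, which by the paper's Lemma 2.4 is a constraint on a single period, not a new relation between two signatures that differ at one place. So your proposal produces no nontrivial relations at all between the $d^{\,n+1}$ numbers $P^{(I)}(\Pi)$, and the Meta-Lemma then has nothing to factor.

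The ingredient you are missing is the Rankin--Selberg Whittaker-period relation for $GL_{N}\times GL_{N-1}$ (Theorem 1.2 of the paper, from Grobner--Harris and Grobner's appendix), applied not to $\Pi$ itself but to an auxiliary cuspidal $\Pi^{\diamondsuit}$ on $GL_{n+3}(\AF)$ paired with the \emph{non-cuspidal} $\Pi^{\#}=\Pi\boxplus\chi_1^c\boxplus\chi_2^c$ on $GL_{n+2}(\AF)$. One then computes $L(\tfrac12+m,\Pi^{\diamondsuit}\times\Pi^{\#})$ in two ways: as a product of $L(\tfrac12+m,\Pi^{\diamondsuit}\times\Pi)$ and two $GL_{n+3}\times GL_1$ values (expressed via $P^{(\cdot)}(\Pi^{\diamondsuit})$ and CM periods), and via $p(\Pi^{\diamondsuit})p(\Pi^{\#})$, where $p(\Pi^{\#})$ in turn factors through $p(\Pi)L(1,\Pi\otimes\chi_1)L(1,\Pi\otimes\chi_2)L(1,\chi_1\chi_2^c)$ by the isobaric structure. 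Swapping $\chi_1,\chi_2$ at a single place $\sigma_0$, running the same computation for $\Pi^{\#\#}$ (which has the same infinity type, so all archimedean constants match), and dividing, one obtains a relation between the ratios $P^{(I_1')}(\Pi)P^{(I_2')}(\Pi)/P^{(I_1)}(\Pi)P^{(I_2)}(\Pi)$ and the corresponding ratio for $\Pi^{\diamondsuit}$. Since the $\Pi^{\diamondsuit}$-signatures that occur lie in $\{1,\dots,n+2\}$, the already-proved restricted factorization (Theorem 3.2) shows the $\Pi^{\diamondsuit}$ ratio is trivial, and the swap relation for $\Pi$ follows. It is this auxiliary $GL_{n+3}$/$GL_{n+2}$ construction --- not the functional equation --- that bridges different signatures $I$, and in particular lets one reach the boundary values $I(\sigma)\in\{0,n\}$; the $2$-regularity is what allows one to realize the prescribed $I_u$ as $I(\Pi,\chi_u)$, and the non-vanishing hypothesis is what guarantees $L(\tfrac12+m,\Pi^{\diamondsuit}\times\Pi^{\#})\neq 0$ at the chosen critical point.
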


We remark that this conjecture is not as simple as it may look like even if we do not put any restriction on the complex numbers $P^{(s)}(\Pi,\sigma)$. In fact, the number of different $P^{(I)}(\Pi)$ is $d^{n+1}$ and the number of different $P^{(I(\sigma))}(\Pi,\sigma)$ is only $d(n+1)$. 
 
On the other hand, it is true that the choice of $P^{(I(\sigma))}(\Pi,\sigma)$ is not unique. We have specified a canonical choice in section $3.5$. Similarly to Shimura's formulation, the canonical choice of $P^{(I(\sigma))}(\Pi,\sigma)$ is related to the unitary group of signature $(1,n-1)$ at $\sigma$ and $(0,n)$ at other places (c.f. section $4.4$ of \cite{harrislin}). The author proved that the periods $P^{(I)}(\Pi)$ as well as the local specified periods $P^{(I)}(\Pi,\sigma)$ are functorial in the sense of Langlands functoriality in \cite{lincomptesrendus} and \cite{linthesis}, .

We also get a partial result with a weaker regular condition (c.f. Theorem \ref{restricted theorem}):
  
 \begin{thm}
If $n\geq 4$ and $\Pi$ satisfies a global non vanishing condition, in particular, if $\Pi$ is $3$-regular, then there exists some non zero complex numbers $P^{(s)}(\Pi,\sigma)$ for all $1\leq s\leq n-1$, $\sigma\in\Sigma$ such that $P^{(I)}(\Pi) \sim_{E(\Pi)} \prod\limits_{\sigma\in\Sigma}P^{(I(\sigma))}(\Pi,\sigma)$ for all $I=(I(\sigma))_{\sigma\in\Sigma}\in \{1,2,\cdots,n-1\}^{\Sigma}$.
 \end{thm}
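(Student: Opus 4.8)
The plan is to reduce the restricted factorization statement to the full conjecture proved in Theorem \ref{complete theorem}, but run the same machinery under the weaker hypothesis, keeping track of exactly where the regularity is used. First I would recall the mechanism by which the periods $P^{(I)}(\Pi)$ are compared: for two signatures $I$ and $I'$ that differ at a single place $\sigma_{0}\in\Sigma$ (say $I'(\sigma_{0})=I(\sigma_{0})+1$ and $I'=I$ elsewhere), one relates $P^{(I)}(\Pi)$ and $P^{(I')}(\Pi)$ by a ratio that should, conjecturally, depend only on the local data at $\sigma_{0}$, namely on the jump from signature $s=I(\sigma_{0})$ to $s+1$ at $\sigma_{0}$. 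The tool producing such one-step comparisons is the automorphic analogue of the motivic calculation — cup products / arithmetic restriction-of-cohomology maps for the tower of Shimura varieties attached to the $U_{I}$, together with CM-period relations (as in \cite{harrislin}), and crucially an Eisenstein or Rankin--Selberg/functoriality input that guarantees the relevant global period is nonzero. It is precisely this nonvanishing that is encoded in the "global non vanishing condition" and that $3$-regularity is asserted to imply for $n\ge 4$.

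Next I would set up the combinatorial bookkeeping. Define, for each $\sigma\in\Sigma$ and each $1\le s\le n-1$, a candidate local period $P^{(s)}(\Pi,\sigma)$ by the telescoping recipe: fix the base signature $I_{0}$ with $I_{0}(\sigma)=1$ for all $\sigma$ (or any fixed reference in $\{1,\dots,n-1\}^{\Sigma}$), set $P^{(1)}(\Pi,\sigma)$ from $P^{(I_{0})}(\Pi)$ via the canonical normalization of section $3.5$, and then inductively define $P^{(s+1)}(\Pi,\sigma)$ so that the one-step comparison at $\sigma$ is an equality up to $E(\Pi)$. The content of the theorem is then that the resulting product formula $P^{(I)}(\Pi)\sim_{E(\Pi)}\prod_{\sigma\in\Sigma}P^{(I(\sigma))}(\Pi,\sigma)$ holds for \emph{every} $I\in\{1,\dots,n-1\}^{\Sigma}$, which follows once one checks that the one-step comparison ratios are "local", i.e. that changing the signature at $\sigma$ produces a ratio independent of the signatures at the other places. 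This independence is the heart of the matter and is where one invokes the factorization of CM periods and the compatibility of base change with the archimedean components place by place. The restriction to $1\le s\le n-1$ (equivalently $I(\sigma)\notin\{0,n\}$) is exactly what is needed so that at every intermediate step the Shimura variety is positive-dimensional at $\sigma$ and the cohomological restriction map is between the ranges where the arithmetic holomorphic/antiholomorphic forms behave well; the endpoints $s=0,n$ would require the extra input (higher regularity, or the automatic $6$-regular case) that Theorem \ref{complete theorem} supplies.

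I would then carry out the argument in the following order: (i) recall from section $2$ the definition of $P^{(I)}(\Pi)$ and from section $3.5$ the canonical local periods, and state the one-step comparison lemma as the key technical ingredient, citing where in the paper it is established; (ii) prove the global nonvanishing needed for the one-step comparison under the hypothesis "$\Pi$ global nonvanishing condition," and check that $3$-regularity with $n\ge 4$ implies it — here the point is that for $n\ge4$ there is enough room to place the auxiliary cuspidal/Eisenstein datum (a cohomological representation of a smaller group, or a suitable automorphic induction) whose $L$-value or archimedean period is forced to be nonzero by regularity; (iii) run the telescoping induction on $\sum_{\sigma}(I(\sigma)-1)$ over all $I\in\{1,\dots,n-1\}^{\Sigma}$, using at each step only a single place change, so that the "local" nature of the ratio established in (i) assembles into the product formula; (iv) observe that the $P^{(s)}(\Pi,\sigma)$ so produced are nonzero because each is a ratio of nonzero complex numbers (Petersson inner products of nonzero forms). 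The main obstacle I anticipate is step (ii): making the global nonvanishing unconditional from $3$-regularity alone, for $n\ge4$, requires an archimedean nonvanishing statement for the relevant branching/cup-product pairing, and a careful choice of the test vector and auxiliary representation so that the associated period does not vanish — the case $n=3$ genuinely fails the bound, which is why the hypothesis excludes it, and near the endpoints $s=1$ or $s=n-1$ the pairing is most degenerate and must be handled with care. Everything else is bookkeeping once the one-step comparison and its nonvanishing are in hand.
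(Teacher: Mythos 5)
Your proposal identifies the right high-level goal (reduce the factorization to a ``local'' comparison that only sees one place $\sigma_0$), but both the combinatorial reduction and the analytic engine you describe differ from the paper's, and the crucial step in your outline is asserted rather than carried out.

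The paper's reduction is a \emph{swap} identity, not a one-step telescope. By Corollary~\ref{factorization lemma} together with Remark~\ref{remark factorization}, to factorize a function on $\prod_{\sigma}\{1,\dots,n-1\}$ it suffices to show, for any $I_1,I_2$ with $I_1(\sigma)\neq I_2(\sigma)$ at \emph{every} $\sigma$ and any single $\sigma_0$, that $P^{(I_1)}(\Pi)P^{(I_2)}(\Pi)\sim_{E(\Pi)}P^{(I'_1)}(\Pi)P^{(I'_2)}(\Pi)$, where $I'_1,I'_2$ are obtained by exchanging the values at $\sigma_0$. This is where $n\geq 4$ enters: the weakened criterion of Remark~\ref{remark factorization} needs $\#\{1,\dots,n-1\}\geq 3$. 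Your explanation of $n\geq 4$ in terms of ``room for the auxiliary datum'' is not the mechanism. Likewise, your explanation of the exclusion of $s=0,n$ (positive-dimensionality of the Shimura variety) is not the reason; the constraint $I_u(\sigma)\in\{1,\dots,n-1\}$ comes from the good-position requirement on $(\Pi,\Pi^{\#})$ in Theorem~\ref{Whittaker period theorem CM}, which forces the auxiliary Hecke characters' infinity parameters to interlace strictly between those of $\Pi$.

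The analytic input is Theorem~\ref{main step for factorization}: choosing $\Pi^{\#}$ to be a Langlands sum of $n-1$ conjugate self-dual Hecke characters $\chi_u$ with interlacing infinity types, the $GL_n\times GL_{n-1}$ Whittaker-period formula (Theorem~\ref{Whittaker period theorem CM}), the $GL_n\times GL_1$ period relation (Theorem~\ref{n*1}), and Blasius' CM-period evaluation of $L(1,\chi_u\chi_v^{-1})$ combine to give
$p(\Pi)\sim_{E(\Pi)}Z(\Pi_{\infty})\prod_{u=1}^{n-1}P^{(I_u)}(\Pi)$
for any $I_1,\dots,I_{n-1}$ that for each $\sigma$ exhaust $\{1,\dots,n-1\}$. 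The swap identity then drops out by completing $\{I_1,I_2\}$ to such a system, completing $\{I'_1,I'_2\}$ with the \emph{same} $I_3,\dots,I_{n-1}$, and comparing the two evaluations of $p(\Pi)$. Your proposal never names this $p(\Pi)$-to-product relation, and instead gestures at cup products/restriction of cohomology for a tower of Shimura varieties; that is closer to Harris's original proof of Shimura's $GL_2$ conjecture, which this paper is explicitly replacing by a different route. As written, the one-step locality you rely on is not established, and I do not see how to deduce it from the tools actually available here, so the proposal has a genuine gap at its core.

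Finally, on the nonvanishing hypothesis: in Theorem~\ref{main step for factorization}, $3$-regularity allows $m=1$, placing the $L$-value in the absolutely convergent range so it is automatically nonzero; otherwise one must assume $L(\tfrac12,\Pi\times\Pi^{\#})\neq 0$. This is independent of the $n\geq 4$ condition, whereas your write-up runs the two together.
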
  
 
Before introducing the proof, let us look at equation (\ref{main conjecture intro}) that we want to prove. The left hand side involves $d^{n+1}$ periods and the right hand side involves only $d(n+1)$ periods. This is only possible if there are many relations between the periods $P^{(I)}(\Pi)$ in the left hand side.
 
In fact, the first step of the our proof is to reduce Conjecture $0.1$ to relations of the periods. The general argument is given in section $3.1$.

The next step is then to prove these relations. The proof involves several techniques like CM periods, Whittaker periods and special values of $L$-functions. We use three results on special values of $L$-functions. The first one is due to Blasius on relations between special values of $L$-functions for Hecke characters and CM periods (c.f. section $1$ of \cite{harrisunitary}). The second one relates special values of $L$-functions for $GL_{n}*GL_{1}$ and the arithmetic automorphic periods $P^{(I)}(\Pi)$ (c.f. \cite{guerberofflin}). The last one is about relations between special values of $L$-functions for $GL_{n}*GL_{n-1}$ and the Whittaker periods which is proved in \cite{harrismotivic} over quadratic imaginary field, and in \cite{haraldappendix} for general CM fields.

The advantage of the last result is that the $GL_{n-1}$-representation do not need to be cuspidal. This allows us to construct auxiliary representations of $GL_{n-1}$ more freely and leads to relations between Whittaker periods and arithmetic automorphic periods $P^{(I)}(\Pi)$ (see Theorem $3.1$) which generalizes Theorem $6.7$ of \cite{harrismotivic}. This relation already implies the partial result mentioned above.

To prove the whole conjecture, a more ingenious construction needs to be made. We construct carefully a non-cuspidal representation of $GL_{n+2}(\AF)$ related to $\Pi$, and an auxiliary cuspidal representation of $GL_{n+3}(\AF)$. The $GL_{n+3}(\AF)$ representation is induced from Hecke characters. Hence special values of its $L$-function can be written in terms of CM periods by Blasius's result. The details can be found in section $3.4$.

The manipulation of different special values with different auxiliary representations can give many interesting results of period relations or special values of $L$-functions. We refer the reader to \cite{lincomptesrendus}, \cite{harrismotivic} or \cite{linthesis} for more examples. More recently, the author and H. Grobner proved some results on special values which implies one case of the Ichino-Ikeda conjecture up to multiplication by an algebraic number in a very general setting (c.f. \cite{grobnerlin}).

We remark at last that Conjecture $0.1$ is predicted by motivic calculation (c.f. section $2.3$ of \cite{harrislin}). More generally, the motivic calculation and the Langlands correspondence predict the existence of more automorphic periods and some finer relations between them. This will be discussed in details in a forthcoming paper of the author with H. Grobner and M. Harris.

\subsection*{Acknowledgement}
I would like to thank Michael Harris for introducing this interesting question and also for helpful discussions. This paper is part of my thesis. I am grateful to Henri Carayol, Kartik Prasanna and Harald Grobner for their careful reading and useful comments.

\section{Preliminaries}

\subsection{Basic Notation}
Let $F$ be a CM field and $F^{+}$ be its maximal totally real subfield. We denote by $J_{F}$ the set of embeddings of $F$ in $\C$. The complex conjugation $c$ acts on the set $J_{F}$. We say $\Sigma$ a subset  of $J_{F}$ is a CM type if $J_{F}$ is the disjoint union of $\Sigma$ and $\Sigma^{c}$. For $\iota\in J_{E}$, we also write $\bar{\iota}$ for the complex conjugation of $\iota$.

As usual, we let $S$ be a finite set of places of $F$, containing all infinite places and all ramified places of any representation which will appear in the text.

Let $\chi$ be a Hecke character of $\cm$. We write $\chi_{\iota}$ by $z^{a_{\iota}}\bar{z}^{a_{\bar{\iota}}}$ for $\iota\in J_{F}$. We say that $\chi$ is {\it algebraic} if $a_{\iota}, a_{\bar{\iota}}\in \Z$ for all $\iota\in J_{F}$. We say that $\chi$ is {\it critical} if it is algebraic and moreover $a_{\iota}\neq a_{\bar{\iota}}$ for all $\iota\in  J_{\cm}$. It is equivalent to that the motive associated to $\chi$ has critical points in the sense of Deligne (cf.\ \cite{deligne79}). We remark that $0$ and $1$ are always critical points in this case.

Moreover, we write $\widecheck{\chi}$ for the Hecke character $\chi^{c,-1}$. Apparently if $\chi$ is algebraic or critical then so is $\widecheck{\chi}$.

For $\Pi$ an algebraic automorphic representation of $GL_{n}(\Acm)$, we know that for each $\iota\in J_{\cm}$, there exists $a_{\iota,1},\cdots, a_{\iota,n}, a_{\bar{\iota},1},\cdots, a_{\bar{\iota},n} \in \Z+\frac{n-1}{2}$ such that $$\Pi_{\iota}\cong {\rm Ind}_{B(\C)}^{G(\C)}[z_1^{a_{\iota,1}}\bar z^{a_{\bar{\iota},1}}_1 \otimes ...\otimes z_n^{a_{\iota,n}}\bar z^{a_{\bar{\iota},n}}_n].$$
Here $Ind$ refers to the normalised parabolic induction. We define the \textit{infinity type} of $\Pi$ at $\iota$ by $\{z^{a_{\iota,i}}\bar{z}^{a_{\bar{\iota},i}}\}_{1\leq i\leq n}$ (c.f. section $3.3$ of \cite{clozel}).

Let $N$ be any positive integer. We say that $\Pi$ is \textit{$N$-regular} if $| a_{\iota,i}-a_{\iota,j} | \geq N$ for any $\iota\in J_F$ and $1\leq i<j\leq n$. We say $\Pi$ is \textit{regular} if it is $1$ regular.

Throughout the text, we fix $\Sigma$ any CM type of $F$. We also fix $\psi$ an algebraic Hecke character of $F$ with infinity type $z^{1}\overline{z}^{0}$ at each place in $\Sigma$ such that $\psi\psi^{c}=||\cdot||_{\AK}$ (see Lemma $4.1.4$ of \cite{CHT} for its existence). It is easy to see that the restriction of $||\cdot||_{\AK}^{\frac{1}{2}}\psi$ to $\AQ^{\times}$ is the quadratic character associated to the extension $K/\Q$ by the class field theory. Consequently our construction is compatible with that in \cite{harrismotivic} or \cite{grobnerlin}.

Let $E$ be a number field.  We consider it as a subfield of $\C$. Let $x$, $y$ be two complex numbers. We say $x\sim_{E} y$ if $y\neq 0$ and $x/y\in E$. This relation is symmetric but not transitive unless we know both numbers involved are non-zero.

The previous relation can be defined in an equivariant way for $Aut(\C)$-families. More precisely,  let $x=\{x(\sigma)\}_{\sigma\in Aut(\C)}$ and $y=\{y(\sigma)\}_{\sigma\in Aut(\C)}$ be families of complex numbers. We say $x\sim_{E} y$ and equivariant under the action of $Aut(\C/F)$ if either $y(\sigma)=0$ for all $\sigma$, either $y(\sigma)\neq 0$ with the following properties:
\begin{enumerate}
\item $x(\sigma)\sim_{\sigma(E)} y(\sigma)$ for all $\sigma\in Aut(\C)$;
\item $\tau\left(\cfrac{x(\sigma)}{y(\sigma)}\right)=\cfrac{x(\tau\sigma)}{y(\tau\sigma)}$  for all $\tau\in Aut(\C/F)$ and all $\sigma\in Aut(\C)$.
\end{enumerate}

Lemma $1.17$ of \cite{grobnerlin} says that if $E$ contains $F^{Gal}$ and $x(\sigma)$ and $y(\sigma)$ depends only on $\sigma\mid E$, then the second point above will imply the first point.

We remark that all the $L$-values and periods in this paper will be considered as $Aut(\C)$-families.

\subsection{Rational structures on certain automorphic representations} 
\text{}

Let $\Pi$ be an automorphic representation of $GL_{n}(\AF)$. 

We denote by $V$ the representation space for $\Pi_{f}$. For $\sigma\in Aut(\C)$, we define another $GL_{n}(\AFf)$-representation $\Pi_{f}^{\sigma}$ to be $V\otimes_{\C,\sigma}\C$. Let $\Q(\Pi)$ be the subfield of $\C$ fixed by $\{\sigma\in Aut(\C) \mid  \Pi_{f}^{\sigma} \cong \Pi_{f}\}$. We call it the \textbf{rationality field} of $\Pi$.

For $E$ a number field, $G$ a group and $V$ a $G$-representation over $\C$, we say $V$ has an \textbf{$E$-rational structure} if there exists an $E$-vector space $V_{E}$ endowed with an action of $G$ such that $V=V_{E}\otimes_{E}\C$ as representation of $G$. We call $V_{E}$ an $E$-rational structure of $V$. 

We denote by $\mathcal{A}lg(n)$ the set of algebraic automorphic representations of $GL_{n}(\AF)$ which are isobaric sums of cuspidal representations as in section $1$ of \cite{clozel}.

\begin{thm}{(Th\'eor\`eme $3.13$ in \cite{clozel})}

Let $\Pi$ be a regular representation in $\mathcal{A}lg(n)$. We have that:
\begin{enumerate} 
\item $\Q(\Pi)$ is a number field. 
\item $\Pi_{f}$ has a $\Q(\Pi)$-rational structure unique up to homotheties. 
\item For all $\sigma\in Aut(\C)$, $\Pi_{f}^{\sigma}$ is the finite part of a regular representation in $\mathcal{A}lg(n)$. It is unique up to isomorphism by the strong multiplicity one theorem. We denote it by $\Pi^{\sigma}$.
\end{enumerate}

\end{thm}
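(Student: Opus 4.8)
This is Th\'eor\`eme $3.13$ of \cite{clozel}; I sketch the strategy. The plan is to realize $\Pi_{f}$ inside the Betti cohomology of the arithmetic locally symmetric spaces attached to $\mathrm{Res}_{F/\Q}GL_{n}$, and to import the rational structure and the $Aut(\C)$-action from topology. First I would reduce to the case where $\Pi$ is cuspidal: writing $\Pi$ as an isobaric sum of cuspidal automorphic representations $\Pi_{1},\dots,\Pi_{r}$ of $GL_{n_{i}}(\AF)$, the regularity of $\Pi$ makes each $\Pi_{i}$ again regular and algebraic, and the Jacquet--Shalika classification of isobaric representations shows that the multiset $\{\Pi_{i,f}\}$ is canonically attached to $\Pi_{f}$. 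Hence $\sigma\in Aut(\C)$ satisfies $\Pi_{f}^{\sigma}\cong\Pi_{f}$ precisely when it permutes this multiset, so $\Q(\Pi)$ is the fixed field of the stabilizer of the multiset, and a $\Q(\Pi)$-rational structure on $\Pi_{f}$ can be built from $\Q(\Pi_{i})$-rational structures on the $\Pi_{i,f}$ via (rational) parabolic induction and descent along the finite $Aut(\C)$-orbit. It therefore suffices to treat each $\Pi_{i}$, i.e.\ to assume $\Pi$ cuspidal.

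For cuspidal, regular, algebraic $\Pi$, the key input is Clozel's observation that such a $\Pi$ is cohomological: combining his purity argument with the Vogan--Zuckerman classification of unitary $(\g,\k)$-modules having nonzero relative Lie algebra cohomology, the regularity hypothesis pins $\Pi_{\infty}$ down to the (tempered, essentially unique) cohomological module matching the prescribed infinity type, and $\Pi_{\infty}$ then has nonvanishing cohomology against the finite-dimensional algebraic representation $W_{\mu}$ of $\mathrm{Res}_{F/\Q}GL_{n}$ whose highest weight $\mu$ is read off from the infinity type. Thus $\Pi_{f}$ occurs in $H^{\ast}(S_{K},\widetilde{W_{\mu}})$ for the tower of locally symmetric spaces $S_{K}$ with local system $\widetilde{W_{\mu}}$. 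Now $S_{K}$ and $\widetilde{W_{\mu}}$ are defined over the number field $E(\mu)$ generated by $\mu$; singular cohomology endows $H^{\ast}(S_{K},\widetilde{W_{\mu}}\otimes\C)=H^{\ast}(S_{K},\widetilde{W_{\mu}})\otimes_{E(\mu)}\C$ with an $E(\mu)$-rational structure carrying an $E(\mu)$-rational Hecke action, and $\sigma\in Aut(\C)$ acts through the coefficient field, matching the $\Pi_{f}$-isotypic part with the $\Pi_{f}^{\sigma}$-isotypic part of $H^{\ast}(S_{K},\widetilde{W_{\sigma\mu}}\otimes\C)$. Since the cohomology involved --- summed over the finite $Aut(\C)$-orbit of $\mu$, for $K$ sufficiently small --- is finite-dimensional and $Aut(\C)$-stable, the orbit of $\Pi_{f}$ is finite, its stabilizer has finite index, and $\Q(\Pi)$ is a number field; this gives $(1)$. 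Each $\Pi_{f}^{\sigma}$ occurs in cohomology with regular coefficients, so running the cohomological dictionary backwards (nonvanishing cohomology $\Rightarrow$ regular algebraic) and invoking strong multiplicity one identifies $\Pi_{f}^{\sigma}$ with the finite part of a unique regular $\Pi^{\sigma}\in\mathcal{A}lg(n)$, which is $(3)$. For $(2)$, one extracts the $\Pi_{f}$-isotypic subspace of the $E(\mu)$-rational cohomology and descends the span of its $Aut(\C)$-conjugates to $\Q(\Pi)$; uniqueness up to homothety is then formal from $\mathrm{End}_{GL_{n}(\AFf)}(\Pi_{f})=\C$ (Schur's lemma), which forces two $\Q(\Pi)$-rational structures to differ by a scalar.

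I expect the main obstacle to be the cohomological realization together with its rationality bookkeeping: first Clozel's lemma that a regular algebraic cuspidal $\Pi$ is cohomological for the correct coefficient system (regularity being exactly what rules out the non-tempered $A_{\q}(\lambda)$'s), and then the careful transport of the $E(\mu)$-rational structure and the $Aut(\C)$-equivariance through the comparison between relative Lie algebra (or de Rham) cohomology and singular cohomology. In the non-cuspidal case there is the additional point --- handled via Franke's description of automorphic cohomology --- that Eisenstein classes built from isobaric sums do contribute to $H^{\ast}(S_{K},\widetilde{W_{\mu}})$ and that $Aut(\C)$ preserves that Eisenstein part; this is why the reduction in the first paragraph, although essentially formal, has to be made compatible with the cohomological picture rather than carried out purely representation-theoretically.
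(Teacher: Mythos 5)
The paper does not actually prove this statement; it simply records Clozel's Th\'eor\`eme~$3.13$ as a black box, with Remark~$1.2$ following it to explain how the cuspidal version extends to the isobaric-sum case appearing in $\mathcal{A}lg(n)$. So there is no ``paper's own proof'' to compare against, and your proposal should be judged as a sketch of Clozel's argument itself.

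As such, it is a correct and reasonably complete account of the standard strategy. You correctly identify the two load-bearing steps: (i) the reduction to the cuspidal case via the Jacquet--Shalika classification, with rationality descended through parabolic induction (the paper handles the same point in Remark~$1.2$ by normalizing the isobaric summands by $||\cdot||^{\frac{1-n_i}{2}}$ before taking the sum, precisely to keep the half-integral twist from spoiling algebraicity/rationality — worth flagging explicitly in your first paragraph, since an unnormalized induction would not preserve the rational structure); (ii) Clozel's purity plus Vogan--Zuckerman showing that regular algebraic cuspidal $\Pi$ is cohomological, which lets $\Pi_f$ be realized Hecke-equivariantly inside the Betti cohomology of the arithmetic locally symmetric spaces with coefficients in an algebraic local system defined over a number field. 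Finiteness of the coefficient systems and of cohomology then bounds the $Aut(\C)$-orbit of $\Pi_f$, giving~$(1)$; reading the dictionary backwards plus strong multiplicity one gives~$(3)$; and descent of the isotypic subspace plus Schur's lemma gives~$(2)$. Your closing paragraph correctly singles out the two genuinely delicate points — the cohomological realization with its rationality bookkeeping, and the Eisenstein contribution in the non-cuspidal case — so the sketch is honest about where the real work lies. The only caveat I would add is that Clozel's original Th\'eor\`eme~$3.13$ is stated for cuspidal $\Pi$, so the isobaric extension you treat in paragraph one is not literally ``in Clozel'' but rather the content of the paper's Remark~$1.2$; your strategy for it is nonetheless the right one.
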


\begin{rem}
Let $n=n_{1}+n_{2}+\cdots+n_{k}$ be a partitian of positive integers and $\Pi_{i}$ be regular representations in $\mathcal{A}lg(n_{i})$ for $1\leq i\leq k$ respectively.

The above theorem implies that, for all $1\leq i\leq k$, the rational field $\Q(\Pi_{i})$ is a number field.

Let $\Pi=(\Pi_{1}||\cdot||_{\AK}^{\frac{1-n_{1}}{2}} \boxplus \Pi_{2}||\cdot||_{\AK}^{\frac{1-n_{2}}{2}}\boxplus\cdots \boxplus \Pi_{k}||\cdot||_{\AK}^{\frac{1-n_{k}}{2}})||\cdot||_{\AK}^{\frac{n-1}{2}}$ be the normalized isobaric sum of $\Pi_{i}$. It is still algebraic.

We can see from definition that $\Q(\Pi)$ is the compositum of $\Q(\Pi_{i})$ with $1\leq i\leq k$. Moreover, if $\Pi$ is regular, we know from the above theorem that $\Pi$ has a $\Q(\Pi)$-rational structure.
\end{rem}

\subsection{Rational structures on the Whittaker model}
\text{}
Let $\Pi$ be a regular representation in $\mathcal{A}lg(n)$ and then its rationality field $\Q(\Pi)$ is a number field.

We fix a nontrivial additive character $\phi$ of $\AF$. Since $\Pi$ is an isobaric sum of cuspidal representations, it is generic. Let $W(\Pi_{f})$ be the Whittaker model associated to $\Pi_{f}$ (with respect to $\phi_{f}$). It consists of certain functions on $GL_{n}(\AF\text{}_{,f})$ and is isomorphic to $\Pi_{f}$ as $GL_{n}(\AF\text{}_{,f})$-modules.

Similarly, we denote the Whittaker model of $\Pi$ (with respect to) $\phi$ by $W(\Pi)$.

\begin{df}{\textbf{Cyclotomic character}}

There exists a unique homomorphism $\xi: Aut(\C) \rightarrow \widehat{\Z}^{\times}$ such that for any $\sigma\in Aut(\C)$ and any root of unity $\zeta$, $\sigma(\zeta)=\zeta^{\xi(\sigma)}$, called the cyclotomic character.
\end{df}

For $\sigma\in Aut(\C)$, we define $t_{\sigma}\in (\widehat{\Z} \otimes_{\Z} \mathcal{O}_{F})^{\times}  =\widehat{\mathcal{O}_{F}}^{\times}$ to be the image of $\xi(\sigma)$ by the embedding $(\widehat{\Z})^{\times} \hookrightarrow (\widehat{\Z} \otimes_{\Z} \mathcal{O}_{F})^{\times} $. We define $t_{\sigma,n}$ to be the diagonal matrix $diag(t_{\sigma}^{-n+1},t_{\sigma}^{-n+2},\cdots,t_{\sigma}^{-1},1)\in GL_{n}(\AF\text{}_{,f})$ as in section $3.2$ of \cite{raghuramshahidi}. 

For $w\in W(\Pi_{f})$, we define a function $w^{\sigma}$ on $GL_{n}(\AF\text{}_{,f})$ by sending $g\in GL_{n}(\AF\text{}_{,f})$ to $\sigma(w(t_{\sigma,n}g))$. For classical cusp forms, this action is just the $Aut(\C)$-action on Fourier coefficients.

\begin{prop}(Lemma $3.2$ of \cite{raghuramshahidi} or Proposition $2.7$ of \cite{harrismotivic})

The map $w\mapsto w^{\sigma}$ gives a $\sigma$-linear $GL_{n}(\AF\text{}_{,f})$-equivariant isomorphism from $W(\Pi_{f})$ to $W(\Pi^{\sigma}_{f})$.

For any extension $E$ of $\Q(\Pi_{f})$, we can define an $E$-rational structure on $W(\Pi_{f})$ by taking the $Aut(\C/E)$-invariants.

Moreover, the $E$-rational structure is unique up to homotheties.

\end{prop}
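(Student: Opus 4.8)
\emph{Plan.} I would prove the two assertions place by place and then reassemble them through the restricted tensor product, obtaining the rationality statement by transporting Clozel's rational structure on $\Pi_f$ across the Whittaker realisation. The first thing to set up is the identification $W(\Pi_f)=\bigotimes'_{v}W(\Pi_v)$ (restricted tensor product over the finite places), the product formed with respect to the normalised spherical Whittaker functions $w_v^{\circ}$ (characterised by $w_v^{\circ}(1)=1$) at the places $v\notin S$; since $\Pi_f^{\sigma}$ is, by the theorem of Clozel recalled above, the finite part of a representation of $\mathcal{A}lg(n)$, one has likewise $W(\Pi_f^{\sigma})=\bigotimes'_{v}W(\Pi_v^{\sigma})$. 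I would then observe that $w\mapsto w^{\sigma}$ is the restricted tensor product of the local operators $w_v\mapsto w_v^{\sigma}$, $w_v^{\sigma}(g):=\sigma\bigl(w_v((t_{\sigma,n})_v\,g)\bigr)$, with $(t_{\sigma,n})_v$ the component of $t_{\sigma,n}$ at $v$.

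\emph{Local step (the heart).} Fix a finite place $v$. Because $\phi_v$ is valued in roots of unity, $\sigma(\phi_v(x))=\phi_v(\xi(\sigma)x)=\phi_v((t_\sigma)_v x)$ for all $x$. Writing $N_n$ for the standard maximal unipotent subgroup of $GL_{n}$, conjugation of $u\in N_n(F_v)$ by $(t_{\sigma,n})_v=\mathrm{diag}((t_\sigma)_v^{-n+1},\dots,(t_\sigma)_v^{-1},1)$ scales each superdiagonal entry of $u$ by $(t_\sigma)_v^{-1}$, and combining this with the previous identity yields $w_v^{\sigma}(ug)=\phi_v(u)\,w_v^{\sigma}(g)$: the operator does \emph{not} change the Whittaker datum, it produces again a $(N_n,\phi_v)$-Whittaker function. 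A one-line check with right translation shows $w_v\mapsto w_v^{\sigma}$ is $\sigma$-linear and $GL_{n}(F_v)$-equivariant, so its image is an irreducible generic representation realised in its $\phi_v$-Whittaker model; since forming the Whittaker model is compatible with the base change $V_v\mapsto V_v\otimes_{\C,\sigma}\C$ — the twist by $(t_{\sigma,n})_v$ being precisely what converts the $\phi_v^{\sigma}$-model back to the $\phi_v$-model — this image is $W(\Pi_v^{\sigma})$. At $v\notin S$ one has $(t_{\sigma,n})_v\in GL_{n}(\mathcal{O}_{F_v})$, so $(w_v^{\circ})^{\sigma}(1)=\sigma\bigl(w_v^{\circ}((t_{\sigma,n})_v)\bigr)=\sigma(1)=1$ by right $GL_{n}(\mathcal{O}_{F_v})$-invariance, and $(w_v^{\circ})^{\sigma}$ is again spherical, hence equals the normalised spherical vector of $W(\Pi_v^{\sigma})$. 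The local operators thus match the vectors defining the two restricted tensor products, so they glue into a $\sigma$-linear $GL_{n}(\AFf)$-equivariant isomorphism $W(\Pi_f)\ira W(\Pi_f^{\sigma})$.

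\emph{Rational structure.} Since $\xi$ is a homomorphism one has $t_{\sigma\tau,n}=t_{\sigma,n}\,\sigma(t_{\tau,n})$, hence $w^{\sigma\tau}=(w^{\sigma})^{\tau}$, so the operators $w\mapsto w^{\sigma}$ do form an action. By Clozel's theorem $\Pi_f$ carries a $\Q(\Pi_f)$-rational structure $V_0$, unique up to homotheties; fixing a Whittaker functional $\Lambda$ spanning the one-dimensional space of $(N_n,\phi)$-functionals on $\Pi_f$, I would transport $V_0$ to a $\Q(\Pi_f)$-rational structure $W_0\subset W(\Pi_f)$, well defined up to a scalar because $\Lambda$ is. The same computation as in the local step, run globally, shows that $\Lambda$ is — up to a scalar — rational over $\Q(\Pi_f)$ in the twisted sense adapted to $\phi$; concretely $w\mapsto w^{\sigma}$ carries $W_0$ to the corresponding structure on $W(\Pi_f^{\sigma})$ and fixes $W_0$ pointwise when $\sigma\in Aut(\C/\Q(\Pi_f))$. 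Granting this, for any $E\supseteq\Q(\Pi_f)$ the subspace $W_0\otimes_{\Q(\Pi_f)}E$ is an $E$-rational structure, and it coincides with the $Aut(\C/E)$-invariants of $\{w\mapsto w^{\sigma}\}$, the coincidence being checked level by level on the finite-dimensional admissible subspaces $W(\Pi_f)^{K'}$, where ordinary Galois descent applies. It is $GL_{n}(\AFf)$-stable since each $w\mapsto w^{\sigma}$ is equivariant, and uniqueness up to homotheties follows from $\mathrm{End}_{GL_{n}(\AFf)}(W(\Pi_f))=\C$ (admissibility and irreducibility of $\Pi_f$).

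\emph{Main obstacle.} The real work is the local computation that $w_v^{\sigma}$ keeps the datum $(N_n,\phi_v)$ instead of a $\sigma$-twist of $\phi_v$ — this is exactly what dictates the shape of $t_{\sigma,n}$ — together with the bookkeeping showing that the induced semilinear $Aut(\C/E)$-action is smooth enough for Galois descent to yield an $E$-form of the infinite-dimensional space $W(\Pi_f)$. The spherical normalisation at the places $v\notin S$, although routine, cannot be skipped: it is what makes the local operators compatible with the restricted tensor product.
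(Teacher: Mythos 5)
Your treatment of the first assertion — the local computation that $w_v\mapsto w_v^{\sigma}$ preserves the Whittaker datum $(N_n,\phi_v)$, is $\sigma$-linear, $GL_n(F_v)$-equivariant, and sends normalized spherical vectors to normalized spherical vectors — is a careful expansion of what the paper simply cites as ``well-known.'' The uniqueness assertion you argue via $\mathrm{End}_{GL_n(\AFf)}(W(\Pi_f))=\C$; the paper instead uses the one-dimensionality of the essential vector from Theorem $5.1$(ii) of Jacquet--Piatetski-Shapiro--Shalika. Both are standard, though when $\Pi$ is a proper isobaric sum you should be careful to justify $\mathrm{End}=\C$ from uniqueness of Whittaker models rather than from irreducibility of $\Pi_f$, which may fail.

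The genuine gap is in the second assertion. You transport Clozel's rational structure $V_0\subset\Pi_f$ across the isomorphism $\iota_\Lambda\colon v\mapsto(g\mapsto\Lambda(\Pi_f(g)v))$ and assert that $W_0:=\iota_\Lambda(V_0)$ coincides with the $Aut(\C/E)$-fixed vectors. What you actually have, for $\sigma\in Aut(\C/\Q(\Pi))$, is that $\sigma\circ\Lambda\circ\Pi_f(t_{\sigma,n})^{-1}\circ\sigma_{V_0}^{-1}$ is again a $\phi$-Whittaker functional, hence equals $c_\sigma\Lambda$ for some scalar $c_\sigma\in\C^{\times}$; the family $(c_\sigma)_{\sigma}$ is a $1$-cocycle. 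Your phrase ``$\Lambda$ is --- up to a scalar --- rational in the twisted sense'' names this scalar but does not show the cocycle is a coboundary, i.e.\ that $\Lambda$ can be renormalized so that $W_0$ is genuinely pointwise-fixed. That trivialization \emph{is} the content of the proposition; it is not automatic, and the route through ``ordinary Galois descent on each $W(\Pi_f)^{K'}$'' presupposes smoothness of the $Aut(\C/E)$-action (each vector fixed by an open subgroup), which is again a consequence of --- not a substitute for --- what is missing. The paper closes this gap concretely: one exhibits a nonzero $Aut(\C)$-invariant vector as a restricted tensor of local invariant vectors, using the normalized spherical vector at $v\notin S$ (which you do discuss) and the essential/new vector of JPSS Theorem $5.1$(ii) at ramified $v$ (which you do not invoke). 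Producing that local invariant vector at the ramified places, and noting that its existence extends from the cuspidal to the isobaric case as in Grobner--Harris, is the heart of the argument and should not be replaced by an unproved rationality claim for $\Lambda$.
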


\begin{dem}
The first part is well-known (see the references in \cite{raghuramshahidi}). mahnkopf05

For the second part, the original proof in \cite{raghuramshahidi} works for cuspidal representations. The key point is to find a nonzero global invariant vector. It is equivalent to finding a nonzero local invariant vector for every finite place. Then Theorem $5.1(ii)$ of \cite{jac-ps-shalika-mathann} is involved as in \cite{harrismotivic}.

The last part follows from the one-dimensional property of the invariant vector which is the second part of Theorem $5.1(ii)$ of \cite{jac-ps-shalika-mathann}.
\end{dem}

\subsection{Rational structures on cohomology spaces and comparison of rational structures}\label{cohomology}
\text{}
Let $\Pi$ be a regular representation in $\mathcal{A}lg(n)$. The Lie algebra cohomology of $\Pi$ has a rational structure. It is described in section $3.3$ of \cite{raghuramshahidi}. We give a brief summary here.

Let $Z$ be the center of $GL_{n}$. Let $\lieG_{\infty}$ be the Lie algebra of $GL_{n}(\R\otimes_{\Q}F)$. Let $S_{real}$ be the set of real places of $F$, $S_{complex}$ be the set of complex places of $F$ and $S_{\infty}=S_{real}\cup S_{complex}$ be the set of infinite places of $F$.

For $v\in S_{real}$, we define $K_{v}:=Z(\R)O_{n}(\R)\subset GL_{n}(F_{v})$. For $v\in S_{complex}$, we define $K_{v}:=Z(\C)U_{n}(\C)\subset GL_{n}(F_{v})$. We denote by $K_{\infty}$ the product of $K_{v}$ with $v\in S_{\infty}$, and by $K_{\infty}^{0}$ the topological connected component of $K_{\infty}$.

We fix $T$ the maximal torus of $GL_{n}$ consisting of diagonal matrices and $B$ the Borel subgroup of $G$ consisting of upper triangular matrices. For $\mu$ a dominant weight of $T(\R\otimes_{\Q}F)$ with respect to $B(\R\otimes_{\Q}F)$, we can define $W_{\mu}$ an irreducible representation of $GL_{n}(\R\otimes_{\Q}F)$ with highest weight $\mu$.

From the proof of Th\'eor\`eme $3.13$ \cite{clozel}, we know that there exists a dominant algebraic weight $\mu$, such that $H^{*}(\lieG_{\infty},K_{\infty}^{0};\Pi_{\infty}\otimes W_{\mu})\neq 0$. 

Let $b$ be the smallest degree such that $H^{b}(\lieG_{\infty},K_{\infty}^{0};\Pi_{\infty}\otimes W_{\mu})\neq 0$. We have an explicit formula for $b$ in \cite{raghuramshahidi}. More precisely, we set $r_{1}$ and $r_{2}$ the numbers of real and complex embeddings of $F$ respectively. We have $b=r_{1}\left[\frac{n^{2}}{4}\right]+r_{2}\frac{n(n-1)}{2}$.

We can decompose this cohomology group via the action of $K_{\infty}/K_{\infty}^{0}$. There exists a character $\epsilon$ of $K_{\infty}/K_{\infty}^{0}$ described explicitly in \cite{raghuramshahidi} such that:
\begin{enumerate}
\item The isotypic component $H^{b}(\lieG_{\infty},K_{\infty}^{0};\Pi_{\infty}\otimes W_{\mu})(\epsilon)$ is one dimensional.

\item For fixed $w_{\infty}$, a generator of $H^{b}(\lieG_{\infty},K_{\infty}^{0};\Pi_{\infty}\otimes W_{\mu})(\epsilon)$, we have a $GL_{n}(\AF\text{}_{,f})$-equivariant isomorphisms:
\begin{eqnarray}\label{rational isomorphism}
W(\Pi_{f}) &\xrightarrow{\sim}& W(\Pi_{f})\otimes H^{b}(\lieG_{\infty},K_{\infty}^{0};\Pi_{\infty}\otimes W_{\mu})(\epsilon)\nonumber\\
&\xrightarrow{\sim}& H^{b}(\lieG_{\infty},K_{\infty}^{0};W(\Pi)\otimes W_{\mu})(\epsilon)\nonumber\\
&\xrightarrow{\sim}&  H^{b}(\lieG_{\infty},K_{\infty}^{0};\Pi \otimes W_{\mu})(\epsilon)
\end{eqnarray}
where the first map sends $w_{f}$ to $w_{f}\otimes w_{\infty}$ and the last map is given by the isomorphism $W(\Pi)\xrightarrow{\sim} \Pi$.  

\item The cohomology space $H^{b}(\lieG_{\infty},K_{\infty}^{0};\Pi \otimes W_{\mu})(\epsilon)$ is related to the cuspidal cohomology if $\Pi$ is cuspidal and to the Eisenstein cohomology if $\Pi$ is not cuspidal. In both cases, it is endowed with a $\Q(\Pi)$-rational structure (see \cite{raghuramshahidi} for cuspidal case and \cite{harrismotivic} for non cuspidal case).

\end{enumerate}

We denote by $\Theta_{\Pi_{f},\epsilon,w_{\infty}}$ the $GL_{n}(\AFf)$-isomorphism given in (\ref{rational isomorphism}) 
\begin{equation}W(\Pi_{f})\xrightarrow{\sim}  H^{b}(\lieG_{\infty},K_{\infty}^{0};\Pi \otimes W_{\mu})(\epsilon)\nonumber.
\end{equation}
 Both sides have a $\Q(\Pi)$-rational structure. In particular, the preimage of the rational structure on the right hand side gives a rational structure on $W(\Pi_{f})$. But the rational structure on $W(\Pi_{f})$ is unique up to homotheties. Therefore, there exists a complex number $p(\Pi_{f},\epsilon,w_{\infty})$ such that the new map $\Theta^{0}_{\Pi_{f},\epsilon,w_{\infty}}=p(\Pi_{f},\epsilon,w_{\infty})^{-1}\Theta_{\Pi_{f},\epsilon,w_{\infty}}$ preserves the rational structure on both sides. It is easy to see that this number $p(\Pi_{f},\epsilon,w_{\infty})$ is unique up to multiplication by elements in $\Q(\Pi)^{\times}$.

Finally, we observe that the $Aut(\C)$-action preserves rational structures on both the Whittaker models and cohomology spaces. We can adjust the numbers $p(\Pi_{f}^{\sigma},\epsilon^{\sigma},w_{\infty}^{\sigma})$ for all $\sigma\in Aut(\C)$ by elements in $\Q(\Pi)^{\times}$ such that the following diagram commutes:

\[\xymatrixcolsep{12pc}
\xymatrix{
W(\Pi_{f}) \ar[d]^{\sigma} \ar[r]^{p(\Pi_{f},\epsilon,w_{\infty})^{-1}\Theta_{\Pi_{f},\epsilon,w_{\infty}}} &H^{b}(\lieG_{\infty},K_{\infty}^{0};\Pi \otimes W_{\mu})(\epsilon)\ar[d]^{\sigma}\\
W(\Pi_{f}^{\sigma}) \ar[r]^{p(\Pi_{f}^{\sigma},\epsilon^{\sigma},w_{\infty}^{\sigma})^{-1}\Theta_{\Pi_{f}^{\sigma},\epsilon^{\sigma},w_{\infty}^{\sigma}}}          &H^{b}(\lieG_{\infty},K_{\infty}^{0};\Pi^{\sigma} \otimes W_{\mu}^{\sigma})(\epsilon^{\sigma})}
\]
The proof is the same as the cuspidal case in \cite{raghuramshahidi}.

In the following, we fix $\epsilon$, $w_{\infty}$ and we define the \textbf{Whittaker period} $p(\Pi):=p(\Pi_{f},\epsilon,w_{\infty})$. For any $\sigma\in Aut(\C)$, we define $p(\Pi^{\sigma}):=p(\Pi_{f}^{\sigma},\epsilon^{\sigma},w_{\infty}^{\sigma})$. It is easy to see that $p(\Pi^{\sigma})=p(\Pi)$ for $\sigma\in Aut(\C/\Q(\Pi))$. 

Moreover, the elements $(p(\Pi^{\sigma}))_{\sigma\in Aut(\C)}$ are well defined up to $\Q(\Pi)^{\times}$ in the following sense: if $(p'(\Pi^{\sigma}))_{\sigma\in Aut(\C)}$ is another family of complex numbers such that $p'(\Pi^{\sigma})^{-1}\Theta_{\Pi_{f}^{\sigma},\epsilon^{\sigma},w_{\infty}^{\sigma}}$ preserves the rational structure and the above diagram commutes, then there exists $t\in \Q(\Pi)^{\times}$ such that $p'(\Pi^{\sigma})=\sigma(t) p(\Pi^{\sigma})$ for any $\sigma\in Aut(\C)$. This also follows from the one dimensional property of the invariant vector. The argument is the same as the last part of the proof of Definition/Proposition $3.3$ in \cite{raghuramshahidi}.\\

The Whittaker periods are closely related to special values of $L$-functions. We refer to \cite{raghuram10}, \cite{harderraghuram}, \cite{harrismotivic} or \cite{grob_harris_lapid} for more details. Here we state a theorem which generalizes the main theorem of \cite{harrismotivic}. The proof can be found in \cite{haraldappendix}.

 Let $\Pi$ be a regular cuspidal cohomological representation of $GL_{n}(\AF)$. Let $\Pi^{\#}$ be a regular automorphic cohomological representation of $GL_{n-1}(\AF)$ which is the Langlands sum of cuspidal representations. Write the infinity type of $\Pi$ (resp. $\Pi'$) at $\sigma\in \Sigma$ by $\{z^{a_{i}(\sigma)}\bar{z}^{-a_{i}(\sigma)}\}_{1\leq i\leq n}$ (resp $\{z^{b_{j}(\sigma)}\bar{z}^{-b_{j}(\sigma)}\}_{1\leq j\leq n-1}$) with $a_{1}(\sigma)> a_{2}(\sigma)> \cdots >a_{n}(\sigma)$ (resp. $b_{1}(\sigma)> b_{2}(\sigma)> \cdots >b_{n-1}(\sigma)$). We say that the pair $(\Pi,\Pi^{\#})$ is in \textit{good position} if for all $\sigma\in\Sigma$ we have $$a_{1}(\sigma)>-b_{n-1}(\sigma)>a_{2}(\sigma)>\cdots>-b_{1}(\sigma)>a_{n}(\sigma).$$

 \begin{thm}\label{Whittaker period theorem CM}
If $(\Pi,\Pi^{\#})$ is in good position, then there exists a non-zero complex number $p(m,\Pi_{\infty},\Pi^{\#}_{\infty})$ which depends on $m,\Pi_{\infty}$ and $\Pi^{\#}_{\infty}$ well defined up to $(E(\Pi)E(\Pi^{\#}))^{\times}$ such that for $m\in \Z$ with $m+\cfrac{1}{2}$ critical for $\Pi\times \Pi^{\#}$, we have
\begin{equation}\label{CM Whittaker period}
L(\cfrac{1}{2}+m,\Pi\times \Pi^{\#})\sim_{E(\Pi)E(\Pi^{\#})} p(m,\Pi_{\infty},\Pi^{\#}_{\infty})p(\Pi)p(\Pi^{\#})
\end{equation}
and is equivariant under the action of $Aut(\C/\Q)$.
\end{thm}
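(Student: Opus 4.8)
The plan is to deduce \eqref{CM Whittaker period} from the Rankin--Selberg integral representation of $L(s,\Pi\times\Pi^{\#})$ of Jacquet--Piatetski-Shapiro--Shalika, reinterpreted as a cup-product pairing on the cohomology of the locally symmetric spaces of $GL_{n}$ and $GL_{n-1}$ over $F$, exactly as in \cite{harrismotivic} and \cite{raghuramshahidi} but now over an arbitrary CM field. For a cusp form $\phi$ in the space of $\Pi$ and an automorphic form $\phi^{\#}$ in the space of $\Pi^{\#}$ one has the global zeta integral
\begin{equation}\label{RS integral}
Z(s,\phi,\phi^{\#})=\int_{GL_{n-1}(F)\backslash GL_{n-1}(\AF)}\phi\begin{pmatrix}g & \\ & 1\end{pmatrix}\phi^{\#}(g)\,|\det g|^{s-\frac{1}{2}}\,dg,
\end{equation}
which unfolds to an Euler product: at the unramified places it contributes the local $L$-factors, at the finite ramified places it contributes factors that are algebraic and $Aut(\C)$-equivariant by the rationality of the local $\gamma$-factors (cf.\ \cite{raghuramshahidi}, \cite{grob_harris_lapid}), hence may be absorbed into $\sim_{E(\Pi)E(\Pi^{\#})}$, and at the archimedean places it contributes a factor to be analysed separately. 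Thus the theorem reduces to controlling the \emph{rational} and the \emph{archimedean} parts of \eqref{RS integral}.

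For the rational part, I would use the isomorphisms $\Theta^{0}_{\Pi_{f},\epsilon,w_{\infty}}$ and $\Theta^{0}_{\Pi^{\#}_{f},\epsilon^{\#},w^{\#}_{\infty}}$ of Section~\ref{cohomology} to pass from $\phi$ and $\phi^{\#}$ to their distinguished cohomology classes on the locally symmetric spaces of $GL_{n}$ and $GL_{n-1}$ over $F$. The embedding $GL_{n-1}\hookrightarrow GL_{n}$ induces a proper map of locally symmetric spaces; pulling the $GL_{n}$-class back along it, cupping with the $\Pi^{\#}$-class, and evaluating against the fundamental class of the $GL_{n-1}$-space realizes \eqref{RS integral} cohomologically, the degrees matching up by the bookkeeping of \cite{raghuramshahidi} and \cite{harrismotivic}. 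Since pull-back, cup product, and evaluation against the fundamental class are each defined over $\Q(\Pi)\Q(\Pi^{\#})$ up to an algebraic scalar (which is absorbed into $\sim_{E(\Pi)E(\Pi^{\#})}$), feeding test vectors drawn from the rational structures of Section~\ref{cohomology} into \eqref{RS integral} shows that, at a critical point $s=\frac{1}{2}+m$, the value $Z$ equals $p(\Pi)\,p(\Pi^{\#})$ times a purely archimedean quantity times an element of $E(\Pi)E(\Pi^{\#})$; the $Aut(\C)$-equivariance of this assertion follows from the commutative diagram at the end of Section~\ref{cohomology}, together with the $\sigma$-equivariance of \eqref{RS integral} up to the $t_{\sigma,n}$-translation of Section~1.3.

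For the archimedean part, I would define $p(m,\Pi_{\infty},\Pi^{\#}_{\infty})$ as the value at $s=\frac{1}{2}+m$ of the archimedean zeta integral evaluated on the distinguished vectors $w_{\infty}$, $w^{\#}_{\infty}$ underlying the cohomology classes, divided by the archimedean $L$-factor $L_{\infty}(s,\Pi_{\infty}\times\Pi^{\#}_{\infty})$; its being well defined up to $(E(\Pi)E(\Pi^{\#}))^{\times}$ comes from the one-dimensionality of the relevant $(\lieG_{\infty},K_{\infty}^{0})$-cohomology and Whittaker spaces, as elsewhere in the construction. The decisive point is that this factor is \emph{non-zero}, and here the good-position hypothesis enters twice: first, applied to the interlaced infinity types via Deligne's criterion, it guarantees that $\frac{1}{2}+m$ is critical for a nonempty set of integers $m$; second, it is precisely the condition ensuring that the archimedean Rankin--Selberg functional does not vanish on the cohomological vectors, which one checks place by place --- each infinite place of the CM field $F$ being complex --- through the branching behaviour for the pair $(GL_{n}(\C),GL_{n-1}(\C))$ and B.~Sun's non-vanishing theorem for archimedean Rankin--Selberg integrals (cf.\ \cite{haraldappendix}). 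I expect this archimedean analysis to be the main obstacle: over a general CM field one must redo the $(\lieG_{\infty},K_{\infty})$-cohomology computations at every complex place, match the cohomological generator $w_{\infty}\otimes w^{\#}_{\infty}$ with a vector on which Sun's non-vanishing applies, and verify that ``good position'' is the correct sufficient condition --- this, together with the degree bookkeeping and the treatment of the central twist, is the technical heart that goes beyond the imaginary quadratic case of \cite{harrismotivic} and is carried out in \cite{haraldappendix}.

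Finally, comparing the two computations of $Z(\frac{1}{2}+m,\phi,\phi^{\#})$ for rational test vectors --- the cohomological one, giving $p(\Pi)p(\Pi^{\#})$ times the archimedean quantity times an element of $E(\Pi)E(\Pi^{\#})$, and the unfolded Euler product, giving algebraic local factors times $p(m,\Pi_{\infty},\Pi^{\#}_{\infty})$ times $L(\frac{1}{2}+m,\Pi\times\Pi^{\#})$ --- yields \eqref{CM Whittaker period}; since $p(m,\Pi_{\infty},\Pi^{\#}_{\infty})\neq 0$ the relation $\sim_{E(\Pi)E(\Pi^{\#})}$ is genuine. Applying the whole argument to the twisted pairs $(\Pi^{\sigma},(\Pi^{\#})^{\sigma})$, which remain in good position (the condition being stable under the $Aut(\C)$-action on the infinity types), and invoking the diagram of Section~\ref{cohomology}, gives the equivariance under $Aut(\C/\Q)$.
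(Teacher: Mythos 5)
The paper does not prove this theorem; it states the result and cites \cite{haraldappendix} (generalizing \cite{harrismotivic} from imaginary quadratic fields to CM fields) for the argument. Your sketch outlines exactly the Rankin--Selberg-as-cup-product strategy that underlies those references --- unfolding the zeta integral, passing through the rational structures of Section~\ref{cohomology} to extract $p(\Pi)p(\Pi^{\#})$, and isolating an archimedean constant whose non-vanishing rests on Sun's theorem plus the good-position interlacing --- and you correctly identify the archimedean branching analysis and degree bookkeeping at each complex place as the technical heart, honestly deferring those steps to \cite{haraldappendix}, which is precisely what the paper itself does.
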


\section{Arithmetic automorphic periods}
\subsection{CM periods}\label{section CM periods}
Let $(T,h)$ be a Shimura datum where $T$ is a torus defined over $\Q$ and $h:Res_{\C/\R}\mathbb{G}_{m,\C}\rightarrow G_{\R}$ a homomorphism satisfying the axioms defining a Shimura variety. Such pair is called a \textbf{special} Shimura datum. Let $Sh(T,h)$ be the associated Shimura variety and $E(T,h)$ be its reflex field.

Let $(\gamma,V_{\gamma})$ be a one-dimensional algebraic representation of $T$ (the representation $\gamma$ is denoted by $\chi$ in \cite{harrisappendix}). We denote by $E(\gamma)$ a definition field for $\gamma$. We may assume that $E(\gamma)$ contains $E(T,h)$. Suppose that $\gamma$ is motivic (see \textit{loc.cit} for the notion). We know that $\gamma$ gives an automorphic line bundle $[V_{\gamma}]$ over $Sh(T,h)$ defined over $E(\gamma)$. Therefore, the complex vector space $H^{0}(Sh(T,h),[V_{\gamma}])$ has an $E(\gamma)$-rational structure, denoted by $M_{DR}(\gamma)$ and called the De Rham rational structure.

On the other hand, the canonical local system $V_{\gamma}^{\triangledown}\subset [V_{\gamma}]$ gives another $E(\gamma)$-rational structure $M_{B}(\gamma)$ on $H^{0}(Sh(T,h),[V_{\gamma}])$, called the Betti rational structure.

We now consider $\chi$ an algebraic Hecke character of $T(\AQ)$ with infinity type $\gamma^{-1}$ (our character $\chi$ corresponds to the character $\omega^{-1}$ in \textit{loc.cit}). Let $E(\chi)$ be the number field generated by the values of $\chi$ on $T(\AQ \text{}_{,f})$ over $E(\gamma)$. We know $\chi$ generates a one-dimensional complex subspace of $H^{0}(Sh(T,h),[V_{\gamma}])$ which inherits two $E(\chi)$-rational structures, one from $M_{DR}(\gamma)$, the other from $M_{B}(\gamma)$. Put $p(\chi,(T,h))$ the ratio of these two rational structures which is well defined modulo $E(\chi)^{\times}$.

\begin{rem}\label{CM complex conjugation}
If we identify $H^{0}(Sh(T,h),[V_{\gamma}])$ with the set $$\{f\in \mathbb{C}^{\infty}(T(\Q)\backslash T(\AQ),\C\mid f(tt_{\infty}))=\gamma^{-1}(t_{\infty})f(t), t_{\infty}\in T(\R), t\in T(\AQ)\}$$, then $\chi$ itself is in the rational structure inherits from $M_{B}(\gamma)$. See discussion from $A.4$ to $A.5$ in \cite{harrisappendix}.
\end{rem}

Suppose that we have two tori $T$ and $T'$ both endowed with a Shimura datum $(T,h)$ and $(T',h')$. Let $u:(T',h')\rightarrow (T,h)$ be a map between the Shimura data. Let $\chi$ be an algebraic Hecke character of $T(\AQ)$. We put $\chi':=\chi\circ u$ an algebraic Hecke character of $T'(\AQ)$. Since both the Betti structure and the De Rham structure commute with the pullback map on cohomology, we have the following proposition:

\begin{prop}\label{propgeneral}
Let $\chi$, $(T,h)$ and $\chi'$, $(T',h')$ be as above. We have:
\begin{equation}\nonumber
p(\chi,(T,h)) \sim_{E(\chi)} p(\chi',(T',h'))
\end{equation}
and is equivariant under the action of $Aut(\C/E(T)E(T'))$.
\end{prop}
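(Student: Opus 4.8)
The plan is to deduce this functoriality statement directly from the functoriality of the two $E(\gamma)$-rational structures $M_{DR}(\gamma)$ and $M_{B}(\gamma)$ under pullback along the map $u$ of Shimura data, so that the period $p(\chi,(T,h))$—being the ratio of these two structures on the line spanned by $\chi$—transforms accordingly. Concretely, let $\gamma$ be the infinity type of $\chi$ (a one-dimensional algebraic representation of $T$) and $\gamma' = \gamma\circ u$ the corresponding representation of $T'$; note $\chi' = \chi\circ u$ has infinity type $\gamma'^{-1}$, and $E(\gamma')\subseteq E(\gamma)$, $E(\chi')\subseteq E(\chi)$ after enlarging fields if necessary, so that working with $E(\chi)$ throughout is harmless. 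The map $u$ induces a morphism of Shimura varieties $Sh(u)\colon Sh(T',h')\to Sh(T,h)$ defined over $E(T)E(T')$, and the automorphic line bundle $[V_{\gamma'}]$ is the pullback $Sh(u)^{*}[V_{\gamma}]$ as a bundle defined over $E(\gamma)$. Hence pullback gives a map on global sections
\[
Sh(u)^{*}\colon H^{0}(Sh(T,h),[V_{\gamma}]) \longrightarrow H^{0}(Sh(T',h'),[V_{\gamma'}]),
\]
and by functoriality of coherent cohomology of line bundles defined over a field, this map sends $M_{DR}(\gamma)$ into $M_{DR}(\gamma')$. The key parallel fact is that the same pullback sends the canonical local system $V_{\gamma}^{\triangledown}$ to $V_{\gamma'}^{\triangledown}$ (since the local system is the one attached to the representation $\gamma$ of $T(\mathbb{R})$, pulled back via $u$), so $Sh(u)^{*}$ also sends the Betti structure $M_{B}(\gamma)$ into $M_{B}(\gamma')$.

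First I would pin down that the pullback map $Sh(u)^{*}$ sends the line $\C\cdot\chi$ spanned by $\chi$ inside $H^{0}(Sh(T,h),[V_{\gamma}])$ onto the line $\C\cdot\chi'$ inside $H^{0}(Sh(T',h'),[V_{\gamma'}])$; this is essentially the identity $\chi\circ u = \chi'$ once one uses the description of Remark~\ref{CM complex conjugation} realizing $\chi$ as an explicit function on $T(\Q)\backslash T(\AQ)$, and noting that $u$ induces the expected map on adelic points. (One must check that the section corresponding to $\chi$ does not pull back to zero, i.e.\ the restriction of $Sh(u)^{*}$ to this line is nonzero; this holds because $\chi'$ is a nonzero Hecke character.) Second, I would invoke the two inclusions above: on the line $\C\cdot\chi$, the De Rham structure is $E(\chi)$ spanned by some vector $v_{DR}$ and the Betti structure is $E(\chi)$ spanned by some $v_{B}$, with $v_{DR} = p(\chi,(T,h))\, v_{B}$ up to $E(\chi)^{\times}$ by definition; applying $Sh(u)^{*}$ and using that it sends $M_{DR}(\gamma)\to M_{DR}(\gamma')$ and $M_{B}(\gamma)\to M_{B}(\gamma')$ compatibly, one gets that $p(\chi',(T',h'))$ is the ratio of $Sh(u)^{*}v_{DR}$ to $Sh(u)^{*}v_{B}$, which equals $p(\chi,(T,h))$ modulo $E(\chi)^{\times}$. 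This yields the relation $p(\chi,(T,h))\sim_{E(\chi)} p(\chi',(T',h'))$.

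For the $Aut(\C/E(T)E(T'))$-equivariance, I would recall that both $M_{DR}$ and $M_{B}$ are not merely rational structures but are compatible with the natural $Aut(\C)$-action on the relevant cohomology (via the fact that $Sh(T,h)$, $[V_{\gamma}]$ and $Sh(u)$ are all defined over the number fields $E(T,h)$, $E(\gamma)$, $E(T)E(T')$), so the whole diagram of pullback maps and rational structures is $Aut(\C/E(T)E(T'))$-equivariant; the ratio $p(\chi,(T,h))/p(\chi',(T',h'))$, lying in $E(\chi)$, then transforms under $\tau\in Aut(\C/E(T)E(T'))$ exactly as required in the definition of $\sim_{E}$ for $Aut(\C)$-families. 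The main obstacle, and the only genuinely nonformal point, is verifying carefully that the canonical local system is compatible with pullback of Shimura data—i.e.\ that $Sh(u)^{*}V_{\gamma}^{\triangledown} = V_{\gamma'}^{\triangledown}$ as sub-local-systems, matching the De Rham side—and that the identification of the $\chi$-line with an explicit automorphic form (Remark~\ref{CM complex conjugation}) is genuinely functorial in $u$; both are implicit in the constructions of \cite{harrisappendix}, and I would cite the discussion there (around sections $A.4$–$A.5$) rather than reprove it, noting that this is precisely the kind of statement for which the Betti–De Rham formalism is designed.
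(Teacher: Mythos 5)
Your proof is correct and takes essentially the same approach as the paper, which disposes of this proposition in a single sentence ("Since both the Betti structure and the De Rham structure commute with the pullback map on cohomology, we have the following proposition") and a citation to Proposition 1.4 of \cite{harrisCMperiod}. You have simply unpacked that one-liner: identifying $\gamma'=\gamma\circ u$, tracing the pullback $Sh(u)^{*}$ on global sections, checking it carries both $M_{DR}$ and $M_{B}$ to their primed counterparts, verifying the $\chi$-line maps nontrivially onto the $\chi'$-line, and observing that the whole diagram is defined over $E(T)E(T')$ to get the Galois equivariance.
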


\begin{rem}
In Proposition $1.4$ of \cite{harrisCMperiod}, the relation is up to $E(\chi);E(T,h)$ where $E(T,h)$ is a number field associated to $(T,h)$. Here we consider the action of $G_{\Q}$ and can thus obtain a relation up to $E(\chi)$ (see the paragraph after Proposition $1.8.1$ of \textit{loc.cit}).
\end{rem}

For $F$ a CM field and $\Psi$ a subset of $\Sigma_{F}$ such that $\Psi\cap \iota\Psi=\varnothing$, we can define a Shimura datum $(T_{F},h_{\Psi})$ where $T_{F}:=Res_{F/\Q}\mathbb{G}_{m,F}$ is a torus and $h_{\Psi}:Res_{\C/\R}\mathbb{G}_{m,\C} \rightarrow T_{F,\R}$ is a homomorphism such that over $\sigma\in \Sigma_{F}$, the Hodge structure induced on $F$ by $h_{\Psi}$ is of type $(-1,0)$ if $\sigma\in \Psi$, of type $(0,-1)$ if $\sigma\in \iota\Psi$, and of type $(0,0)$ otherwise. \\

Let $\chi$ be a motivic critical character of a CM field $F$. By definition, $p_{F}(\chi,\Psi)=p(\chi,(T_{F},h_{\Psi}))$ and we call it a \textbf{CM period}. Sometimes we write $p(\chi,\Psi)$ instead of $p_{F}(\chi,\Psi)$ if there is no ambiguity concerning the base field $F$.

\begin{ex}
We have $p(||\cdot||_{\AK},1)\sim_{\Q} (2\pi i)^{-1}.$ See  $(1.10.9)$ on page $100$ of \cite{harris97}. 
\end{ex}

\begin{prop}\label{propCM}

Let $\tau\in J_{F}$ and $\Psi$ be a subset of $J_{F}$ such that $\Psi\cap \Psi^{c}=\varnothing$. We take $\Psi=\Psi_{1}\sqcup\Psi_{2}$ a partition of $\Psi$. 

 We then have:
\begin{eqnarray}\label{charmulti}
&p(\chi_{1}\chi_{2}), \Psi) \sim _{E(\chi_{1})E(\chi_{2})} p(\chi_{1}, \Psi)p(\chi_{2}^{\sigma}, \Psi).&
\\ \label{separateCMtype}
&p(\chi, \Psi_{1}\sqcup\Psi_{2})\sim _{E(\chi) } p(\chi, \Psi_{1})p(\chi,\Psi_{2}).&
\\
&p(\chi, \Psi) \sim _{E(\chi)} p(\bar{\chi},\bar{\Psi}).&
\end{eqnarray}
All the relations are equivariant under the action of $Aut(\C/F^{Gal})$.
\end{prop}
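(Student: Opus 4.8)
The plan is to deduce all three relations from the functoriality statement of Proposition \ref{propgeneral}, applied to three morphisms of Shimura data built from the tori $T_F$, together with one compatibility of the De Rham and Betti rational structures with products. Concretely: if $(T_1,h_1)$ and $(T_2,h_2)$ are special Shimura data and $\gamma_i$ is a motivic character of $T_i$, then $Sh(T_1\times T_2,h_1\times h_2)=Sh(T_1,h_1)\times Sh(T_2,h_2)$ over the compositum of the reflex fields, the automorphic line bundle attached to $\gamma_1\boxtimes\gamma_2$ is $\mathrm{pr}_1^{*}[V_{\gamma_1}]\otimes\mathrm{pr}_2^{*}[V_{\gamma_2}]$, and the Künneth formula identifies both the De Rham and the Betti rational structure on its $H^{0}$ with the tensor product of the corresponding structures downstairs. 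Since a Hecke character $\chi_1\boxtimes\chi_2$ spans the tensor product of the lines spanned by $\chi_1$ and $\chi_2$, one deduces
\[
p(\chi_1\boxtimes\chi_2,(T_1\times T_2,h_1\times h_2))\ \sim_{E(\chi_1)E(\chi_2)}\ p(\chi_1,(T_1,h_1))\,p(\chi_2,(T_2,h_2)),
\]
equivariantly under $Aut(\C/E(T_1,h_1)E(T_2,h_2))$.

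For the multiplicativity in the character I would apply Proposition \ref{propgeneral} to the diagonal $\Delta\colon T_F\to T_F\times T_F$, which is a morphism of Shimura data $(T_F,h_\Psi)\to(T_F\times T_F,h_\Psi\times h_\Psi)$ (the reflex fields agree), taking for the character on the target $\chi_1\boxtimes\chi_2$, whose pullback along $\Delta$ is $\chi_1\chi_2$; chaining with the product formula above (all terms being nonzero, as ratios of rational structures) gives $p(\chi_1\chi_2,\Psi)\sim_{E(\chi_1)E(\chi_2)}p(\chi_1,\Psi)p(\chi_2,\Psi)$. For the additivity in the CM type I would apply Proposition \ref{propgeneral} to the multiplication $m\colon T_F\times T_F\to T_F$; the point to verify is that $m$ carries $h_{\Psi_1}\times h_{\Psi_2}$ to $h_{\Psi_1\sqcup\Psi_2}$, which reduces to the additivity of Hodge cocharacters $\mu_{h_{\Psi_1\sqcup\Psi_2}}=\mu_{h_{\Psi_1}}+\mu_{h_{\Psi_2}}$ in $X_{*}(T_{F,\C})=\bigoplus_{\sigma\in J_F}\Z$, which is immediate because, in the normalization dictated by the Hodge types in the definition of $h_\Psi$, the cocharacter $\mu_{h_\Psi}$ is the characteristic function of $\Psi$ and $\Psi_1\cap\Psi_2=\varnothing$. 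This also shows $E(T_F,h_{\Psi_1\sqcup\Psi_2})\subseteq E(T_F,h_{\Psi_1})E(T_F,h_{\Psi_2})$, so $m$ is indeed a morphism of Shimura data; since $\chi\circ m=\chi\boxtimes\chi$, chaining with the product formula yields $p(\chi,\Psi_1\sqcup\Psi_2)\sim_{E(\chi)}p(\chi,\Psi_1)p(\chi,\Psi_2)$.

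For the behaviour under complex conjugation, let $c_{*}\colon T_F\to T_F$ be the automorphism induced by complex conjugation on $F$. A comparison of Hodge types shows that $c_{*}$ carries $h_{\bar\Psi}$ to $h_\Psi$ (with $\bar\Psi=\Psi^{c}$), and since the reflex field of a CM type is stable under complex conjugation one has $E(T_F,h_{\bar\Psi})=E(T_F,h_\Psi)$, so $c_{*}$ is a morphism of Shimura data $(T_F,h_{\bar\Psi})\to(T_F,h_\Psi)$. As $\chi\circ c_{*}=\bar\chi:=\chi^{c}$ and $E(\bar\chi)=E(\chi)$, Proposition \ref{propgeneral} gives directly $p(\chi,\Psi)\sim_{E(\chi)}p(\bar\chi,\bar\Psi)$. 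In all three cases the compositum of reflex fields controlling the equivariance is a compositum of reflex fields of (partial) CM types of $F$, hence is contained in $F^{Gal}$; therefore the three relations are equivariant under $Aut(\C/F^{Gal})$, as asserted. One also records that criticality of $\chi$ is a condition on its infinity type alone, so it persists when $\chi$ is viewed against each $(T_F,h_{\Psi_i})$ and against $(T_F,h_{\bar\Psi})$, and every period appearing is defined.

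I expect the main obstacle to be the compatibility statement recalled at the outset: one must check carefully that forming the product of two special Shimura data and the external tensor product of the two automorphic line bundles is compatible, on $H^{0}$, with \emph{both} the De Rham structure $M_{DR}$ and the Betti structure $M_{B}$ — i.e. that Künneth identifies $M_{DR}\otimes M_{DR}$ and $M_{B}\otimes M_{B}$ with the corresponding structures on the product — and that $\Delta$, $m$ and $c_{*}$ genuinely are morphisms of Shimura data with the claimed archimedean components (the cocharacter bookkeeping for $m$ and the stability of reflex fields of CM types under conjugation for $c_{*}$). These are standard facts about canonical models and automorphic vector bundles, in the spirit of the treatment of CM periods in \cite{harrisappendix} and \cite{harrisCMperiod}, but they are exactly where the care is needed; once they are in place, all three relations are immediate consequences of Proposition \ref{propgeneral}.
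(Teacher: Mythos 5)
Your proposal follows essentially the same route as the paper: the author's proof consists exactly of reading off these three relations from Proposition \ref{propgeneral} applied to the diagonal map $(T_F,h_\Psi)\to(T_F\times T_F,h_\Psi\times h_\Psi)$, the multiplication map $T_F\times T_F\to T_F$ (carrying $h_{\Psi_1}\times h_{\Psi_2}$ to $h_{\Psi_1\sqcup\Psi_2}$), and the Galois-induced automorphism of $T_F$ (carrying $h_\Psi$ to $h_{\Psi^\theta}$, with $\theta=c$ for the third relation). Your additional spelling-out of the K\"unneth compatibility of $M_{DR}$ and $M_B$ on products, the cocharacter bookkeeping for the multiplication map, and the reflex-field inclusions is a correct and welcome filling-in of details that the paper's one-line proof leaves implicit.
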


\begin{proof}
All the equations in Proposition \ref{propCM} come from Proposition \ref{propgeneral} by certain maps between Shimura data as follows:
\begin{enumerate}
\item The diagonal map $ (T_{F},h_{\Psi})\rightarrow (T_{F}\times T_{F},h_{\Psi}\times h_{\Psi})$ pulls $(\chi_{1},\chi_{2})$ back to $\chi_{1}\chi_{2}$.
\item The multiplication map $T_{F} \times T_{F} \rightarrow T_{F}$ sends $h_{\Psi_{1}}$, $h_{\Psi_{2}}$ to $h_{\Psi_{1}\sqcup \Psi_{2}}$.
\item The Galois action $\theta: H_{F}\rightarrow H_{F}$ sends $h_{\Psi}$ to $h_{\Psi^{\theta}}$.
\item The norm map $(T_{F},h_{\{\tau\}})\rightarrow (T_{F_{0}},h_{\{\tau|_{F_{0}}\}})$ pulls $\eta$ back to $\eta\circ N_{\AF/\mathbb{A}_{F,0}}$.
\end{enumerate}
\end{proof}

The special values of an $L$-function for a Hecke character over a CM field can be interpreted in terms of CM periods. The following theorem is proved by Blasius. We state it as in Proposition $1.8.1$ in \cite{harrisCMperiod} where $\omega$ should be replaced by $\check{\omega}:=\omega^{-1,c}$ (for this erratum, see the notation and conventions part on page $82$ in the introduction of \cite{harris97}),

\begin{thm}\label{blasius}
Let $F$ be a CM field and $F^{+}$ be its maximal totally real subfield. Put $d$ the degree of $F^{+}$ over $\Q$.

Let $\chi$ be a motivic critical algebraic Hecke character of $F$ and $\Phi_{\chi}$ be the unique CM type of $F$ which is compatible with $\chi$.

For $m$ a critical value of $\chi$ in the sense of Deligne (c.f. \cite{deligne79}), we have
\begin{equation}\nonumber
L(\chi,m) \sim_{E(\chi)} (2\pi i)^{md}p(\check{\chi},\Phi_{\chi})
\end{equation}
equivariant under action of $Aut(\C/F^{Gal})$.
\end{thm}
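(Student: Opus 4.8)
The plan is to deduce Theorem~\ref{blasius} from Deligne's period conjecture (\cite{deligne79}) for the rank-one motive attached to $\chi$, which in the CM case is a theorem of Blasius and ultimately rests on Shimura's work on periods of abelian varieties with complex multiplication. First I would attach to the critical algebraic Hecke character $\chi$ of $F$ a rank-one motive $M(\chi)$ over $F$ with coefficients in $E(\chi)$: its $\lambda$-adic realizations are the Galois characters cut out by $\chi$ via class field theory, and its Hodge decomposition at each $\iota\in J_{F}$ is dictated by the infinity type $z^{a_{\iota}}\bar z^{a_{\bar\iota}}$. The criticality hypothesis $a_{\iota}\neq a_{\bar\iota}$ is exactly the condition that makes a suitable Tate twist of $Res_{F/\Q}M(\chi)$ a critical motive over $\Q$ (no Hodge type on the diagonal), and by construction $L(s,Res_{F/\Q}M(\chi))=L(\chi,s)$ up to normalization, so $L(\chi,m)$ is a critical value in Deligne's sense.

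Next I would invoke Deligne's conjecture for this motive: $L(\chi,m)\sim_{E(\chi)}c^{+}\big(Res_{F/\Q}M(\chi)(m)\big)$, where $c^{+}$ denotes Deligne's period. This is where the substance lies, and it is a theorem only because $M(\chi)$ has complex multiplication: Blasius proves it by reducing, through the theory of abelian varieties with CM and their algebraic period relations (Shimura, Katz), to statements that can be checked one CM type at a time. The $d$-fold Tate twist by $m$ contributes the factor $(2\pi i)^{md}$, one factor $(2\pi i)^{m}$ for each conjugate pair in $J_{F}$ (equivalently each archimedean place of $F^{+}$), so what remains is to identify $c^{+}(Res_{F/\Q}M(\chi))$ with the CM period $p(\check{\chi},\Phi_{\chi})$.

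For that identification I would use the description of $p(\chi,\Psi)=p(\chi,(T_{F},h_{\Psi}))$ recalled above: it is the ratio of the de Rham structure $M_{DR}$ and the Betti structure $M_{B}$ on the line spanned by $\chi$ inside $H^{0}$ of the automorphic line bundle over the zero-dimensional Shimura variety $Sh(T_{F},h_{\Psi})$. By the theory of canonical models and the reciprocity law at CM points, this Betti--de Rham comparison computes precisely the periods of $M(\chi)$ relative to the CM type $\Psi$, and the compatible CM type $\Phi_{\chi}$ is exactly the one singling out the Hodge pieces that survive in $c^{+}$; the dual character $\check{\chi}=\chi^{-1,c}$ appears because $c^{+}$ is the period of the comparison in the contravariant direction (equivalently, of the dual motive), which matches the normalization fixed when $p(\cdot,\cdot)$ was defined. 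The $Aut(\C/F^{Gal})$-equivariance is then obtained by running the whole argument $Aut(\C)$-equivariantly: one uses $L(\chi,m)^{\sigma}=L(\chi^{\sigma},m)$ in the chosen normalization (Shimura--Blasius reciprocity for special values), the obvious transformations $\Phi_{\chi}\mapsto\Phi_{\chi^{\sigma}}$ and $E(\chi)\mapsto\sigma(E(\chi))$, and the equivariance already built into Proposition~\ref{propgeneral}.

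The main obstacle is Deligne's conjecture for CM motives itself, i.e.\ Blasius's theorem: it is genuinely deep and depends on Shimura's results on the algebraic relations among periods of CM abelian varieties together with a careful analysis of how $c^{+}$ factors through embeddings and CM types. A secondary but delicate point is the bookkeeping of normalizations --- pinning down the passage to $\check{\chi}$, the choice of CM type $\Phi_{\chi}$, and the exact power of $2\pi i$, and checking that every intermediate step is compatible with $\sim_{E(\chi)}$ and with the Galois equivariance rather than merely with equality up to an algebraic number.
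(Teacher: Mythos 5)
The paper does not actually prove this theorem: it records it as a result of Blasius, cited through Proposition~1.8.1 of \cite{harrisCMperiod} (with the erratum $\omega\mapsto\check\omega$ noted there). Your sketch is therefore not in competition with a proof in the text, but it is a faithful reconstruction of the standard route to the stated form: attach the rank-one CM motive $M(\chi)$, observe that criticality in the sense used here is criticality for the restriction of scalars, invoke Blasius's theorem that Deligne's conjecture holds for CM motives, identify the period $p(\cdot,\cdot)$ of Section~\ref{section CM periods} (Betti versus de~Rham rational structures on $H^{0}$ of a line bundle over the zero-dimensional Shimura variety $Sh(T_{F},h_{\Psi})$) with the appropriate Deligne period, account for the Tate twist giving the factor $(2\pi i)^{md}$ since $\dim H_{B}^{+}=d$, and carry everything through $Aut(\C)$-equivariantly. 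The one place I would urge more care is the normalization step that produces $\check\chi=\chi^{-1,c}$ and the compatible CM type $\Phi_{\chi}$: the paper itself flags a sign/normalization erratum in the reference, so the bookkeeping you call a ``secondary but delicate point'' is precisely where this statement can go wrong if one is not matching Harris's conventions exactly. That caveat aside, your argument correctly locates the genuine content (Blasius's theorem) and the auxiliary identifications, and matches the approach the paper is relying on via its citation.
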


\begin{rem}\label{criticalforcharacter}
 Let $\{\sigma_{1},\sigma_{2},\cdots,\sigma_{n}\}$ be any CM type of $F$. Let $(\sigma_{i}^{a_{i}}\overline{\sigma}_{i}^{-w-a_{i}})_{1\leq i\leq n}$ denote the infinity type of $\chi$ with $w=w(\chi)$. We may assume $a_{1}\geq a_{2}\geq \cdots \geq a_{n}$. We define $a_{0}:=+\infty$ and $a_{n+1}:=-\infty$ and define $k:=max\{0\leq i\leq n\mid a_{i}>-\cfrac{w}{2}\}$. An integer $m$ is critical for $\chi$ if and only if

\begin{equation}\label{criticalvalue1}
       max(-a_{k}+1,w+1+a_{k+1})\leq m \leq min(w+a_{k},-a_{k+1}).
\end{equation}

\end{rem}

\subsection{Construction of cohomology spaces}

Let $I$ be the signature of a unitary group $U_{I}$ of dimension $n$ with respect to the extension $F/F^{+}$. Let $V_{I}$ be the associated Hermitian space. We can consider $I$ as a map from $\Sigma$ to $\{0,1,\cdots,n\}$. We write $s_{\sigma}:=I(\sigma)$ and $r_{\sigma}:=n-I(\sigma)$ for all $\sigma\in\Sigma$.

Denote $\Ss:=Res_{\C/\R}\mathbb{G}_{m}$. We define the rational similitude unitary group defined by
\begin{equation}\label{definition of nu(g)}
GU_{I}(R):=\{g\in GL(V_{I}\otimes_{\Q}R)|(gv,gw)=\nu(g)(v,w), 
  \nu(g)\in R^{*}\}
  \end{equation}
 where $R$ is any $\Q$-algebra.

We know that $GU_{I}(\R)$ is isomorphic to a subgroup of $\prod\limits_{\sigma\in \Sigma}GU(r_{\sigma},s_{\sigma})$ defined by the same \textit{similitude}. We can define a homomorphism $h_{I}:\Ss(\R) \rightarrow GU_{I}(\R)$ by sending $z\in \C$ to $\left(\begin{pmatrix}
zI_{r_{\sigma}} & 0\\
0 & \bar{z}I_{s_{\sigma}}
\end{pmatrix}\right)_{\sigma\in \Sigma}$. \\

Let $X_{I}$ be the $GU_{I}(\R)$-conjugation class of $h_{I}$. We know $(GU_{I},X_{I})$ is a Shimura datum with reflex field $E_{I}$ and dimension $2\sum\limits_{\sigma\in \Sigma} r_{\sigma}s_{\sigma}$. The Shimura variety associated to $(GU_{I},X_{I})$ is denoted by $Sh_{I}$.

Let $K_{I,\infty}$ be the centralizer of $h_{I}$ in $GU_{I}(\R)$. Via the inclusion $GU_{I}(\R)\hookrightarrow \prod\limits_{\sigma\in \Sigma}GU(r_{\sigma},s_{\sigma})\subset \R^{+,\times}\prod\limits_{\sigma\in\Sigma} U(n,\C)$, we may identify $K_{I,\infty}$ with \begin{equation}\nonumber   \{  (\mu,\begin{pmatrix}
 u_{r_{\sigma}} & 0\\
0 &  v_{s_{\sigma}}
\end{pmatrix}_{\sigma\in \Sigma}) \mid u_{r_{\sigma}}\in U(r_{\sigma},\C), v_{s_{\sigma}}\in U(s_{\sigma},\C), \mu\in \R^{+,\times}\}\end{equation}  
where $U(r,\C)$ is the standard unitary group of degree $r$ over $\C$. Let $H_{I}$ be the subgroup of $K_{I,\infty}$ consisting of the diagonal matrices in $K_{I,\infty}$. Then it is a maximal torus of $GU_{I}(\R)$. Denote its Lie algebra by $\lieH_{I}$.

We observe that $H_{I}(\R) \cong \R^{+,\times}\times \prod\limits_{\sigma\in\Sigma} U(1,\C)^{n}$. Its algebraic characters are of the form 
\begin{equation}
\nonumber   (w,(z_{i}(\sigma))_{\sigma\in\Sigma,1\leq i\leq n})\mapsto w^{\lambda_{0}} \prod\limits_{\sigma\in\Sigma} \prod\limits_{i=1}^{n}z_{i}(\sigma)^{\lambda_{i}(\sigma)}
\end{equation}  
 where $(\lambda_{0},(\lambda_{i}(\sigma))_{\sigma\in\Sigma,1\leq i\leq n})$ is a $(nd+1)$-tuple of integers with $\lambda_{0} \equiv \sum\limits_{\sigma\in\Sigma}\sum\limits_{i=1}^{n}\lambda_{i}(\sigma)$ (mod $2$).

 Recall that $GU_{I}(\C)\cong \C^{\times}\prod_{\sigma\in\Sigma}GL_{n}(\C)$. We fix $B_{I}$ the Borel subgroup of $GU_{I,\C}$ consisting of upper triangular matrices. The highest weights of finite-dimensional irreducible representations of $K_{I,\infty}$ are tuples $\Lambda=(\Lambda_{0},(\Lambda_{i}(\sigma))_{\sigma\in\Sigma,1\leq i\leq n})$ such that $\Lambda_{1}(\sigma)\geq \Lambda_{2}(\sigma)\geq \cdots\geq \Lambda_{r_{\sigma}}(\sigma)$, $ \Lambda_{r_{\sigma}+1}(\sigma) \geq \cdots \geq  \Lambda_{n}(\sigma)$ for all $\sigma$ and $\Lambda_{0} \equiv \sum\limits_{\sigma\in\Sigma}\sum\limits_{i=1}^{n}\Lambda_{i}(\sigma)$ (mod $2$).

We denote the set of such tuples by $\Lambda(K_{I,\infty})$. Similarly, we write $\Lambda(GU_{I})$ for the set of the highest weights of finite-dimensional irreducible representations of $GU_{I}$. It consists of tuples $\lambda=(\lambda_{0},(\lambda_{i}(\sigma))_{\sigma\in\Sigma,1\leq i\leq n})$ such that $\lambda_{1}(\sigma)\geq \lambda_{2}(\sigma) \geq  \cdots \lambda_{n}(\sigma)$ for all $\sigma$ and $\lambda_{0} \equiv \sum\limits_{\sigma\in\Sigma}\sum\limits_{i=1}^{n}\lambda_{i}(\sigma)$ (mod $2$).

We take $\lambda\in\Lambda(GU_{I})$ and $\Lambda\in \Lambda(K_{I,\infty})$.

Let $V_{\lambda}$  and $V_{\Lambda}$ be the corresponding representations.  We define a local system over $Sh_{I}$:
\begin{equation}\nonumber   W_{\lambda}^{\triangledown}:=\lim_{\overleftarrow{K}} GU_{I}(\Q)\backslash V_{\lambda}\times X\times GU_{I}(\AQf)/K\end{equation}  
and an automorphic vector bundle over $Sh_{I}$
\begin{equation}\nonumber   E_{\Lambda}:=\lim_{\overleftarrow{K}} GU_{I}(\Q)\backslash V_{\Lambda}\times GU_{I}(\R) \times GU_{I}(\AQf)/KK_{I,\infty}\end{equation}  
where $K$ runs over open compact subgroup of $GU_{I}(\AQf)$.

The automorphic vector bundles $E_{\Lambda}$ are defined over the reflex field $E$. \\

The local systems $W_{\lambda}^{\triangledown}$ are defined over $K$. The Hodge structure of the cohomology space $H^{q}(Sh_{I},W_{\lambda}^{\triangledown})$ is not pure in general. But the image of $H_{c}^{q}(Sh_{I},W_{\lambda}^{\triangledown})$ in $H^{q}(Sh_{I},W_{\lambda}^{\triangledown})$ is pure of weight $q-c$. We denote this image by $\bar{H}^{q}(Sh_{I},W_{\lambda}^{\triangledown})$. \\

Note that all cohomology spaces have coefficients in $\C$ unless we specify its rational structure over a number field.

\subsection{The Hodge structures}
\text{}
The results in section $2.2$ of \cite{harrismotives} give a description of the Hodge components of $\bar{H}^{q}(Sh_{I},W_{\lambda}^{\triangledown})$.

Denote by $R^{+}$ the set of positive roots of $H_{I,\C}$ in $GU_{I}(\C)$ and by $R_{c}^{+}$ the set of positive compact roots. Define $\alpha_{j,k}=(0,\cdots,0,1,0,\cdots,0,-1,0,\cdots,0)$ for any $1\leq j<k\leq n$. We know $R^{+}=\{(\alpha_{j_{\sigma},k_{\sigma}})_{\sigma\in \Sigma}\mid 1\leq j_{\sigma}<k_{\sigma}\leq n\}$ and 
$R_{c}^{+}=\{(\alpha_{j_{\sigma},k_{\sigma}})_{\sigma\in \Sigma}\mid j_{\sigma}<k_{\sigma}\leq r_{\sigma} \text{ or } r_{\sigma}+1\leq j_{\sigma}<k_{\sigma}\}$.

Let $\rho=\cfrac{1}{2}\sum\limits_{\alpha\in R^{+}}\alpha=\left(\left(\cfrac{n-1}{2},\cfrac{n-3}{2},\cdots,-\cfrac{n-1}{2}\right)\right)_{\sigma}$. \\

Let $\lieG$, $\lieK$ and $\lieH$ be Lie algebras of $GU_{I}(\R)$, $K_{I,\infty}$ and $H(\R)$. Write $W$ for the Weyl group $W(\lieG_{\C},\lieH_{\C})$ and $W_{c}$ for the Weyl group $W(\lieK_{\C},\lieH_{\C})$. We can identify $W$ with $\prod\limits_{\sigma\in\Sigma}\mathfrak{S}_{n}$ and $W_{c}$ with $\prod\limits_{\sigma\in\Sigma}\mathfrak{S}_{r_{\sigma}}\times \mathfrak{S}_{s_{\sigma}}$ where $\mathfrak{S}$ refers to the standard permutation group. For $w\in W$, we write the length of $w$ by $l(w)$.

Let $W^{1}:=\{w\in W| w(R^{+})\supset R_{c}^{+}\}$ be a subset of $W$. By the above identification, $(w_{\sigma})_{\sigma}\in W^{1}$ if and only if  $w_{\sigma}(1)<w_{\sigma}(2)<\cdots<w_{\sigma}(r_{\sigma})$ and $w_{\sigma}(r_{\sigma}+1)<\cdots<w_{\sigma}(n)$  One can show that $W^{1}$ is a set of coset representatives of shortest length for $W_{c}\backslash W$.

Moreover, for $\lambda$ a highest weight of a representation of $GU_{I}$, one can show easily that $w*\lambda:=w(\lambda+\rho)-\rho$ is the highest weight of a representation of $K_{I,\infty}$. More precisely, if $\lambda=(\lambda_{0},(\lambda_{i}(\sigma))_{\sigma\in\Sigma,1\leq i\leq n})$, then $w*\lambda=(\lambda_{0},((w*\lambda)_{i}(\sigma))_{\sigma\in\Sigma,1\leq i\leq n})$ with $(w*\lambda)_{i}(\sigma)=\lambda_{w_{\sigma}(i)}(\sigma)+\cfrac{n+1}{2}-w_{\sigma}(i)-(\cfrac{n+1}{2}-i)=\lambda_{w_{\sigma}(i)}(\sigma)-w_{\sigma}(i)+i$.

\begin{rem}\label{hodgetype}
The results of \cite{harrismotives} tell us that there exists
\begin{equation}
\bar{H}^{q}(Sh_{I},W_{\lambda}^{\triangledown})\cong \bigoplus\limits_{w\in W^{1}}\bar{H}^{q;w}(Sh_{I},W_{\lambda}^{\triangledown})
\end{equation}
a decomposition as subspaces of pure Hodge type $(p(w,\lambda),q-c-p(w,\lambda))$. We now determine the Hodge number $p(w,\lambda)$.

We know that $w*\lambda$ is the highest weight of a representation of $K_{I,\infty}$. We denote this representation by $(\rho_{w*\lambda},W_{w*\lambda})$. We know that $\rho_{w*\lambda} \circ h_{I}|_{\Ss(\R)}: \Ss(\R)\rightarrow K_{I,\infty} \rightarrow GL(W_{w*\lambda})$ is of the form $z\mapsto z^{-p(w,\lambda)}\bar{z}^{-r(w,\lambda)}I_{W_{w*\lambda}}$ with $p(w,\lambda)$, $r(w,\lambda)\in \Z$. The first index $p(w,\lambda)$ is the Hodge type mentioned above. 

Recall that the map 
\begin{eqnarray}
h_{I}|_{\Ss(\R)}: \Ss(\R)&\rightarrow& K_{I,\infty}\subset \R^{+,\times}\times U(n,\C)^{\Sigma}\\ z&\mapsto& \left(|z|,
\begin{pmatrix}
\cfrac{z}{|z|}I_{r_{\sigma}} & 0\\\nonumber
0 & \cfrac{\bar{z}}{|z|}I_{s_{\sigma}}
\end{pmatrix}_{\sigma\in\Sigma}\right)
\end{eqnarray}
and the map 
\begin{eqnarray}
\nonumber \rho_{w*\lambda}: K_{I,\infty} &\rightarrow& GL(W_{w*\lambda}) \\\nonumber (w,diag(z_{i}(\sigma))_{\sigma\in\Sigma,1\leq i\leq n})&\mapsto & w^{\lambda_{0}}\prod\limits_{\sigma\in\Sigma} \prod\limits_{i=1}^{n}z_{i}(\sigma)^{(w*\lambda)_{i}(\sigma)}
\end{eqnarray}
 where $diag(z_{1},z_{2},\cdots,z_{n})$ means the diagonal matrix of coefficients $z_{1},z_{2},\cdots,z_{n}$.

Therefore we have:

\begin{eqnarray}
&&z^{-p(w,\lambda)}\bar{z}^{-r(w,\lambda)} \nonumber\\
&=&|z|^{\lambda_{0}}\prod\limits_{\sigma\in\Sigma}(\prod\limits_{1\leq i\leq r_{\sigma}}(\cfrac{z}{|z|})^{(w*\lambda)_{i}(\sigma)}\prod\limits_{r_{\sigma}+1\leq i\leq n}(\cfrac{\overline{z}}{|z|})^{(w*\lambda)_{i}(\sigma)}\nonumber\\
&=& (z^{\frac{1}{2}}\overline{z}^{\frac{1}{2}})^{\lambda_{0}-\sum\limits_{\sigma\in\Sigma}\sum\limits_{1\leq i\leq n}(w*\lambda)_{i}(\sigma)}z^{\sum\limits_{\sigma\in\Sigma}\sum\limits_{1\leq i\leq r_{\sigma}}(w*\lambda)_{i}(\sigma)}
\overline{z}^{\sum\limits_{\sigma\in\Sigma}\sum\limits_{r_{\sigma}+1\leq i\leq n}(w*\lambda)_{i}(\sigma)} \nonumber
\end{eqnarray}

Since $(w*\lambda)_{i}(\sigma)=\lambda_{w_{\sigma}(i)}(\sigma)-w_{\sigma}(i)+i$ and then $\sum\limits_{\sigma\in\Sigma}\sum\limits_{1\leq i\leq n}(w*\lambda)_{i}(\sigma)=\sum\limits_{\sigma\in\Sigma}\sum\limits_{1\leq i\leq n}\lambda_{i}(\sigma)$, we obtain that:
\begin{eqnarray}\label{smallest}
p(w,\lambda) &=& \cfrac{\sum\limits_{\sigma\in\Sigma}\sum\limits_{1\leq i\leq n}\lambda_{i}(\sigma)-\lambda_{0}}{2}-\sum\limits_{\sigma\in\Sigma}\sum\limits_{1\leq i\leq r_{\sigma}}(w*\lambda)_{i}(\sigma)\nonumber\\
&=& \cfrac{\sum\limits_{\sigma\in\Sigma}\sum\limits_{1\leq i\leq n}\lambda_{i}(\sigma)-\lambda_{0}}{2}-\sum\limits_{\sigma\in\Sigma}\sum\limits_{1\leq i\leq r_{\sigma}}(\lambda_{w_{\sigma}(i)}(\sigma)-w_{\sigma}(i)+i)
\end{eqnarray}
\end{rem}

The method of toroidal compactification gives us more information on $\bar{H}^{q;w}(Sh_{I},W_{\lambda}^{\triangledown})$. We take $j:Sh_{I} \hookrightarrow \widetilde{Sh}_{I}$ to be a smooth toroidal compactification. Proposition $2.2.2$ of \cite{harrismotives} tells us that the following results do not depend on the choice of the toroidal compactification.

The automorphic vector bundle $E_{\Lambda}$ can be extended to $\widetilde{Sh}_{I}$ in two ways: the canonical extension $E_{\Lambda}^{can}$ and the sub canonical extension $E_{\Lambda}^{sub}$ as explained in \cite{harrismotives}. Define:
\begin{equation}\nonumber   \bar{H}^{q}(Sh_{I},E_{\Lambda})=Im(H^{q}(\widetilde{Sh}_{I},E_{\Lambda}^{sub})\rightarrow H^{q}(\widetilde{Sh}_{I},E_{\Lambda}^{can})).\end{equation}

\begin{prop}
There is a canonical isomorphism
\begin{equation}\nonumber   \bar{H}^{q;w}(Sh_{I},W_{\lambda}^{\triangledown})\cong \bar{H}^{q-l(w)}(Sh_{I},E_{w*\lambda})\end{equation}  
\end{prop}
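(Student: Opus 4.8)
The plan is to identify the $w$-component of the cohomology of the local system with the Eisenstein/coherent cohomology of the automorphic vector bundle $E_{w*\lambda}$ by passing through the BGG (Bernstein--Gelfand--Gelfand) complex, exactly as in the archetype of Faltings and its adaptation in \cite{harrismotives}. First I would recall that the local system $W_{\lambda}^{\triangledown}$ resolves, via the dual BGG complex, into a complex of automorphic vector bundles whose terms in degree $k$ are $\bigoplus_{w\in W^{1},\,l(w)=k}E_{w*\lambda}$; this is where the decomposition $\bar{H}^{q}=\bigoplus_{w\in W^{1}}\bar{H}^{q;w}$ comes from in the first place, so the relevant claim is essentially the degeneracy at $E_1$ of the spectral sequence of this complex together with the Hodge-theoretic identification of the terms. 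The key point to extract from Remark \ref{hodgetype} and Proposition $2.2.2$ of \cite{harrismotives} is that the piece $\bar{H}^{q;w}$ is precisely the contribution of the single BGG term $E_{w*\lambda}$ sitting in complex-degree $l(w)$, whence the shift $q\mapsto q-l(w)$ in the cohomological degree.

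The steps, in order, would be: (1) set up the dual BGG resolution of $W_{\lambda}^{\triangledown}$ on $Sh_I$ by canonical extensions of the $E_{w*\lambda}$ over the toroidal compactification $\widetilde{Sh}_I$, and likewise for the subcanonical extensions, using that the BGG complex is strict with respect to the Hodge filtration (Faltings' principle of ``BGG degeneracy''); (2) compare the hypercohomology of this complex computing $H^{q}(\widetilde{Sh}_I,\,\cdot\,)$ for both the canonical and subcanonical extensions, and take the image, which by definition gives $\bar{H}^{q}(Sh_I,W_{\lambda}^{\triangledown})$ on the one hand and the sum of the $\bar{H}^{q-l(w)}(Sh_I,E_{w*\lambda})$ on the other; (3) match the $w$-graded pieces using the Hodge type computation $p(w,\lambda)$ of Remark \ref{hodgetype}: the term $E_{w*\lambda}$ contributes in Hodge bidegree determined by $p(w,\lambda)$, which is exactly the Hodge type attached to $\bar{H}^{q;w}$, so the identification is forced term by term; (4) check that the isomorphism is canonical, i.e.\ independent of the toroidal compactification, invoking Proposition $2.2.2$ of \cite{harrismotives}, and that it is compatible with the $GU_I(\mathbb{A}_{\mathbb{Q},f})$-action and the rational structures over the reflex field.

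The main obstacle — and the only genuinely non-formal input — is step (2): the degeneration statement that the BGG spectral sequence degenerates and that the formation of images $H_c^{q}\to H^{q}$ (equivalently $H^q(\widetilde{Sh}_I,E_{\Lambda}^{sub})\to H^q(\widetilde{Sh}_I,E_{\Lambda}^{can})$) is compatible with the BGG filtration. This is precisely the content of the work of Faltings--Chai and Harris (\cite{harrismotives}) on coherent cohomology of toroidal compactifications, and I would cite it rather than reprove it; the subtlety is that one needs the BGG complex to be compatible with \emph{both} the canonical and subcanonical extensions simultaneously and to commute with the passage to the ``interior'' cohomology $\bar{H}$. Granting that, the rest is the bookkeeping of matching highest weights $w*\lambda$ and the length $l(w)$ to the Hodge type $p(w,\lambda)$ already carried out in Remark \ref{hodgetype}, so the proof reduces to assembling the cited inputs and checking the degree shift.
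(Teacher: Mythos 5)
Your proposal is correct and takes the same route the paper does: the paper simply states this proposition after invoking \cite{harrismotives} (canonical/subcanonical extensions on a toroidal compactification, independence via Proposition $2.2.2$ of \textit{loc.cit.}), and what you have written out is precisely the dual BGG / Faltings-degeneracy content of that citation, matching the graded piece $\bar{H}^{q;w}$ to the term $E_{w*\lambda}$ sitting in complex-degree $l(w)$ and checking Hodge types via the computation of $p(w,\lambda)$. The only remark worth adding is that, since the paper gives no argument beyond the reference, the real burden in a self-contained write-up would be exactly your step (2) — that the BGG filtration is compatible with both extensions and with passage to the image defining $\bar{H}$ — and you are right to flag that as the single non-formal input to be cited from Faltings--Chai and \cite{harrismotives} rather than reproved.
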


Let $D=2\sum\limits_{\sigma\in\Sigma}r_{\sigma}s_{\sigma}$ be the dimension of the Shimura variety. We are interested in the cohomology space of degree $D/2$. Proposition $2.2.7$ of \cite{harris97} also works here:

\begin{prop}
The space $\bar{H}^{D/2}(Sh_{I},W_{\lambda}^{\triangledown})$ is naturally endowed with a $K$-rational structure, called the de Rham rational structure and noted by $\bar{H}_{DR}^{D/2}(Sh_{I},W_{\lambda}^{\triangledown})$. This rational structure is endowed with a $K$-Hodge filtration $F^{\cdot}\bar{H}_{DR}^{D/2}(Sh_{I},W_{\lambda}^{\triangledown})$ pure of weight $D/2-c$ such that
\begin{equation}\nonumber   F^{p}\bar{H}_{DR}^{D/2}(Sh_{I},W_{\lambda}^{\triangledown})/F^{p+1}\bar{H}_{DR}^{D/2}(Sh_{I},W_{\lambda}^{\triangledown})\otimes_{K} \C \cong \bigoplus\limits_{w\in W^{1},p(w,\lambda)=p} \bar{H}^{D/2;w}(Sh_{I},W_{\lambda}^{\triangledown}).\end{equation}  
Moreover, the composition of the above isomorphism and the canonical isomorphism \begin{equation}\nonumber
\bar{H}^{D/2;w}(Sh_{I},W_{\lambda}^{\triangledown})\cong \bar{H}^{D/2-l(w)}(Sh_{I},E_{w*\lambda})
\end{equation} is rational over $K$.
\end{prop}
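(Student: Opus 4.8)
The plan is to reduce the statement to the corresponding facts about the De Rham cohomology of the canonical and subcanonical extensions on the toroidal compactification $\widetilde{Sh}_{I}$, exactly as in the proof of Proposition $2.2.7$ of \cite{harris97}, and then transport them through the canonical isomorphisms already established above. First I would recall that, by general results on automorphic vector bundles over Shimura varieties of abelian type (or by the construction of $E_{\Lambda}$, $E_{\Lambda}^{can}$, $E_{\Lambda}^{sub}$ over the reflex field), each $H^{q}(\widetilde{Sh}_{I},E_{w*\lambda}^{sub})$ and $H^{q}(\widetilde{Sh}_{I},E_{w*\lambda}^{can})$ carries a natural $E$-rational, hence a fortiori $K$-rational, structure, and the map between them is defined over $K$; therefore $\bar{H}^{q}(Sh_{I},E_{w*\lambda})$ inherits a $K$-rational structure. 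Combined with the canonical isomorphism $\bar{H}^{D/2;w}(Sh_{I},W_{\lambda}^{\triangledown})\cong \bar{H}^{D/2-l(w)}(Sh_{I},E_{w*\lambda})$ of the preceding proposition, this is what will produce the De Rham rational structure on $\bar{H}^{D/2}(Sh_{I},W_{\lambda}^{\triangledown})$ once we know the Hodge decomposition matches.

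Next I would identify the Hodge filtration. The Hodge-to-de Rham degeneration for the log-de Rham complex attached to $W_{\lambda}^{\triangledown}$ on the smooth toroidal compactification gives that $F^{p}/F^{p+1}$ of $\bar{H}_{DR}^{D/2}$ is computed by the coherent cohomology of the graded pieces of the Hodge-filtered de Rham complex, and Remark \ref{hodgetype} together with the Proposition identifying $\bar{H}^{q;w}$ with $\bar{H}^{q-l(w)}(Sh_{I},E_{w*\lambda})$ tells us precisely which $w\in W^{1}$ contribute in filtration degree $p$, namely those with $p(w,\lambda)=p$. So the content is: (i) the spectral sequence degenerates at $E_{1}$ (purity of weight $D/2-c$ on the image $\bar{H}^{D/2}$, which follows from the same weight argument used to define $\bar{H}^{q}$), and (ii) the $E_{1}$-terms are the coherent cohomology groups $\bar{H}^{D/2-l(w)}(Sh_{I},E_{w*\lambda})$, each defined over $K$, and the identification of $\mathrm{gr}^{p}$ with the sum over $p(w,\lambda)=p$ is rational over $K$ because every map in sight — the comparison of the filtered de Rham complex with the coherent complex, the canonical extension, the BGG-type resolution — is defined over the reflex field $E\subset K$.

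The step I expect to be the main obstacle is (ii): making the rationality of the identification $\mathrm{gr}^{p}\bar{H}_{DR}^{D/2}\otimes_{K}\C\cong\bigoplus_{p(w,\lambda)=p}\bar{H}^{D/2;w}$ precise, i.e. checking that the dual BGG complex (or the Hodge-filtered log-de Rham complex) realizing this decomposition is defined over $K$ and that passing to the image of $H^{*}(E^{sub})$ in $H^{*}(E^{can})$ is compatible with the filtration on both sides. This is exactly where \cite{harrismotives} and \cite{harris97} do the careful work; I would invoke Proposition $2.2.2$ of \cite{harrismotives} (independence of the toroidal compactification) to know the construction is canonical, and then cite the fact that the canonical and subcanonical extensions, together with their Hodge filtrations, are defined over $E$, so all the rational structures are compatible after base change to $K$. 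The remaining verifications — that $b=D/2$ is indeed in the relevant range, that the weight is $D/2-c$, and that the filtration jumps are indexed by the $p(w,\lambda)$ computed in \eqref{smallest} — are then formal consequences of Remark \ref{hodgetype} and the preceding two propositions.
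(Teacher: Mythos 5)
Your proposal is a correct unpacking of the argument, and it takes essentially the same route as the paper: the paper offers no independent proof here but simply asserts that Proposition $2.2.7$ of \cite{harris97} carries over to this setting, and your outline is a faithful reconstruction of what that proposition rests on — toroidal compactification, $E$-rationality of the canonical and subcanonical extensions, Hodge-to-de Rham degeneration, and the (dual) BGG identification of the graded pieces with the coherent cohomology groups $\bar{H}^{D/2-l(w)}(Sh_{I},E_{w*\lambda})$. You also correctly locate the genuine content in step (ii), the $K$-rationality of the comparison isomorphism and its compatibility with the filtration on the image of $H^*(E^{sub})$ in $H^*(E^{can})$, which is exactly what \cite{harris97} and \cite{harrismotives} verify.
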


\paragraph{Holomorphic part:}

Let $w_{0}\in W^{1}$ defined by $$w_{0}(\sigma)(1,2,\cdots,r_{\sigma};r_{\sigma+1},\cdots,n)_{\sigma\in\Sigma}=(s_{\sigma+1},\cdots,n;1,2,\cdots,s_{\sigma})$$ for all $\sigma\in\Sigma$. It is the only longest element in $W^{1}$. Its length is $D/2$. 

We have a $K$-rational isomorphism \begin{equation}\bar{H}^{D/2;w_{0}}(Sh_{I},W_{\lambda}^{\triangledown})\cong \bar{H}^{0}(Sh_{I},E_{w_{0}*\lambda}).
\end{equation}

We can calculate the Hodge type of $\bar{H}^{D/2;w_{0}}(Sh_{I},W_{\lambda}^{\triangledown})$ as in Remark \ref{hodgetype}.

By definition we have \begin{equation}\label{twist by w0}
w_{0}*\lambda=(\lambda_{0},(\lambda_{s_{\sigma}+1}(\sigma)-s_{\sigma},\cdots,\lambda_{n}(\sigma)-s_{\sigma};\lambda_{1}(\sigma)+r_{\sigma},\cdots,\lambda_{s_{\sigma}}(\sigma)+r_{\sigma})_{\sigma\in\Sigma}).
\end{equation} By the discussion in Remark \ref{hodgetype}, the Hodge number 

\begin{equation}\nonumber   
p(w_{0},\lambda)=\cfrac{\sum\limits_{\sigma\in\Sigma}\sum\limits_{1\leq i\leq n}\lambda_{i}(\sigma)-\lambda_{0}+D}{2}-\sum\limits_{\sigma\in\Sigma}(\lambda_{s_{\sigma}+1}(\sigma)+\cdots+\lambda_{n}(\sigma)).\end{equation}  

From equation (\ref{smallest}), it is easy to deduce that $p(w_{0},\lambda)$ is the only largest number among $\{p(w,\lambda)\mid w\in W^{1}\}$. Therefore \begin{equation}F^{p(w_{0},\lambda)} (Sh_{I},W_{\lambda}^{\triangledown})\otimes_{K} \C \cong \bar{H}^{0}(Sh_{I},E_{w_{0}*\lambda}).
\end{equation} Moreover, as mentioned in the above proposition, we know that the above isomorphism is $K$-rational.

We call $\bar{H}^{D/2;w_{0}}(Sh_{I},W_{\lambda}^{\triangledown})\cong\bar{H}^{0}(Sh_{I},E_{w_{0}*\lambda})$ the \textbf{holomorphic part} of the Hodge decomposition of $\bar{H}^{D/2}(Sh_{I},W_{\lambda}^{\triangledown})$. It is isomorphic to the space of holomorphic cusp forms of type $(w_{0}*\lambda)^{\vee}$.

\paragraph{Anti-holomorphic part:}\text{}
The only shortest element in $W^{1}$ is the identity with the smallest Hodge number 
\begin{equation}\nonumber   
p(id,\lambda)=\cfrac{\sum\limits_{\sigma\in\Sigma}\sum\limits_{1\leq i\leq n}\lambda_{i}(\sigma)-\lambda_{0}}{2}-\sum\limits_{\sigma\in\Sigma}(\lambda_{1}(\sigma)+\cdots+\lambda_{r_{\sigma}}(\sigma)).
\end{equation} 
We call $\nonumber   \bar{H}^{D/2;id}(Sh_{I},W_{\lambda}^{\triangledown})\cong \bar{H}^{D/2}(Sh_{I},E_{\lambda})$
the \textbf{anti-holomorphic part} of the Hodge decomposition of $\bar{H}^{D/2}(Sh_{I},W_{\lambda}^{\triangledown})$.

\subsection{Complex conjugation}\label{Complex conjugation}

We specify some notation first. \\

Let $\lambda=(\lambda_{0},(\lambda_{1}(\sigma)\geq \lambda_{2}(\sigma)\geq \cdots \geq \lambda_{n}(\sigma))_{\sigma\in\Sigma}) \in \Lambda(GU_{I})$ as before. We define $\lambda^{c}:=(\lambda_{0},(-\lambda_{n}(\sigma)\geq -\lambda_{n-1}(\sigma)\geq \cdots \geq -\lambda_{1}(\sigma))_{\sigma\in\Sigma})$ and $\lambda^{\vee}:=(-\lambda_{0},(-\lambda_{n}(\sigma)\geq -\lambda_{n-1}(\sigma)\geq \cdots \geq -\lambda_{1}(\sigma))_{\sigma\in\Sigma})$. They are elements in $\Lambda(GU_{I})$. Moreover, the representation $V_{\lambda^{c}}$ is the complex conjugation of $V_{\lambda}$ and the representation $V_{\lambda^{\vee}}$ is the dual of $V_{\lambda}$ as $GU_{I}$-representation.

Similarly, for $\Lambda=(\Lambda_{0},(\Lambda_{1}(\sigma)\geq \cdots \geq \Lambda_{r_{\sigma}}(\sigma),\Lambda_{r_{\sigma}+1}(\sigma)\geq \cdots \geq \Lambda_{n}(\sigma))_{\sigma\in\Sigma})\in \Lambda(K_{I,\infty})$, we define $\Lambda^{*}:=(-\Lambda_{0},(-\Lambda_{r_{\sigma}}(\sigma)\geq \cdots \geq -\Lambda_{1}(\sigma),-\Lambda_{n}\geq\cdots\geq -\Lambda_{r_{\sigma}+1})_{\sigma\in\Sigma})$. 

We know $V_{\Lambda^{*}}$ is the dual of $V_{\Lambda}$ as $K_{I}$-representation. We sometimes write the latter as $\widecheck{V_{\Lambda}}$.

We define $I^{c}$ by $I^{c}(\sigma)=n-I(\sigma)$ for all $\sigma\in\Sigma$. We know $V_{I^{c}}=-V_{I}$ and $GU_{I^{c}}\cong GU_{I}$. The complex conjugation gives an anti-holomorphic isomorphism $X_{I}\xrightarrow{\sim} X_{I^{c}}$. This induces a $K$-antilinear isomorphism \begin{equation}
\bar{H}^{D/2}(Sh_{I},W_{\lambda}^{\triangledown})\xrightarrow{\sim}  \bar{H}^{D/2}(Sh_{I^{c}},W_{\lambda^{c}}^{\triangledown}).
\end{equation}
In particular, it sends holomorphic (resp. anti-holomorphic) elements with respect to $(I,\lambda)$ to those respect to $(I^{c},\lambda^{c})$. If we we denote by $w_{0}^{c}$ the longest element related to $I^{c}$ then we have $K$-antilinear rational isomorphisms
\begin{eqnarray}
c_{DR}:  &\bar{H}^{0}(Sh_{I},E_{w_{0}*\lambda})\xrightarrow{\sim}  \bar{H}^{0}(Sh_{I^{c}},E_{w_{0}^{c}*\lambda^{c}})&\\
& \bar{H}^{D/2}(Sh_{I},E_{\lambda})\xrightarrow{\sim}  \bar{H}^{D/2}(Sh_{I^{c}},E_{\lambda^{c}})&.
\end{eqnarray}

 The Shimura datum $(GU_{I},h)$ induces a Hodge structure of wights concentrated in $\{(-1,1),(0,0),(1,-1)\}$ which corresponds to the Harish-Chandra decomposition induced by $h$ on the Lie algebra:
$\lieG=\lieK_{\C}\oplus \lieP^{+}\oplus \lieP^{-}$.

Let $\mathfrak{P}=\lieK_{\C}\oplus \lieP^{-}$. Let $\mathcal{A}$ (resp. $\mathcal{A}_{0}$, $\mathcal{A}_{(2)}$) be the space of automorphic forms (resp. cusp forms, square-integrable forms) on $GU_{I}(\Q)\backslash GU_{I}(\AQ)$.  \\

We have inclusions for all $q$:
\begin{eqnarray}\nonumber   H^{q}(\lieG,K_{I,\infty};\mathcal{A}_{0}\otimes V_{\lambda})\subset \bar{H}^{q}(Sh_{I},V_{\lambda}^{\triangledown})\subset H^{q}(\lieG,K_{I,\infty};\mathcal{A}_{(2)}\otimes V_{\lambda})\\
\nonumber   H^{q}(\mathfrak{P},K_{I,\infty};\mathcal{A}_{0}\otimes V_{\Lambda})\subset \bar{H}^{q}(Sh_{I},E_{\Lambda})\subset H^{q}(\mathfrak{P},K_{I,\infty};\mathcal{A}_{(2)}\otimes V_{\Lambda}).
\end{eqnarray}

The complex conjugation on the automorphic forms induces a $K$-antilinear isomorphism:
\begin{equation} \label{definition cB}
c_{B}: \bar{H}^{0}(Sh_{I},E_{w_{0}*\lambda}) \xrightarrow{\sim}   \bar{H}^{D/2}(Sh_{I},E_{\lambda^{\vee}})
\end{equation}

More precisely, we summarize the construction in \cite{harris97} as follows.

\paragraph{Automorphic vector bundles:}
\text{}

We recall some facts on automorphic vector bundles first. We refer to page $113$ of \cite{harris97} and \cite{harrisarithmeticvectorbundle1} for notation and further details.

Let $(G,X)$ be a Shimura datum such that its special points are all CM points. Let $\widecheck{X}$ be the compact dual symmetric space of $X$. There is a surjective functor from the category of $G$-homogeneous vector bundles on $\widecheck{X}$ to the category of automorphic vector bundles on $Sh(G,X)$. This functor is compatible with inclusions of Shimura data as explained in the second part of Theorem $4.8$ of \cite{harrisarithmeticvectorbundle1}. It is also rational over the reflex field $E(G,X)$. \\

Let $\mathcal{E}$ be an automorphic vector bundle on $Sh(G,X)$ corresponding to $\mathcal{E}_{0}$, a $G$-homogeneous vector bundle on $\widecheck{X}$. Let $(T,x)$ be a special pair of $(G,X)$, i.e. $(T,x)$ is a sub-Shimura datum of $(G,X)$ with $T$ a maximal torus defined over $\Q$. Since the functor mentioned above is compatible with inclusions of Shimura datum, we know that the restriction of $\mathcal{E}$ to $Sh(T,x)$ corresponds to the restriction of $\mathcal{E}_{0}$ to $\check{x}\in \widecheck{X}$ by the previous functor. Moreover, by the construction, the fiber of $\mathcal{E}\mid_{Sh(T,x)}$ at any point of $Sh(T,x)$ is identified with the fiber of $\mathcal{E}_{0}$ at $\check{x}$. The $E(\mathcal{E})\cdot E(T,x)$-rational structure on the fiber of $\mathcal{E}_{0}$ at $\check{x}$ then defines a rational structure of $\mathcal{E}\mid_{Sh(T,x)}$ and called the \textbf{canonical trivialization} of $\mathcal{E}$ associated to $(T,x)$.

\paragraph{Complex conjugation on automorphic vector bundles:}
\text{}

Let $\mathcal{E}$ be as in page $112$ of \cite{harris97} and $\overline{\mathcal{E}}$ be its complex conjugation. The key step of the construction is to identify $\overline{\mathcal{E}}$ with the dual of $\mathcal{E}$ in a rational way. \\

More precisely, we recall Proposition $2.5.8$ of the \textit{loc.cit} that there exists a non-degenerate $G(\AQf)$-equivariant paring of real-analytic vector bundle $\mathcal{E}\otimes \overline{\mathcal{E}}\rightarrow \mathcal{E}_{\nu}$ such that its pullback to any CM point is rational with respect to the canonical trivializations. \\

We now explain the notion $\mathcal{E}_{\nu}$. Let $h\in X$ and $K_{h}$ be the stabilizer of $h$ in $G(\R)$. We know $\mathcal{E}$ is associated to an irreducible complex representation of $K_{h}$, denoted by $\tau$ in the \textit{loc.cit}. The complex conjugation of $\tau$ can be extended as an algrebraic representation of $K_{h}$, denoted by $\tau'$. We know $\tau'$ is isomorphic to the dual of $\tau$ and then there exists $\nu$, a one-dimensional representation $K_{h}$, such that a $K_{h}$-equivariant rational paring $V_{\tau}\otimes V_{\tau'}\rightarrow V_{\nu}$ exists. We denote by $\mathcal{E}_{\nu}$
the automorphic vector bundle associated to $V_{\nu}$.

In our case, we have $(G,X)=(GU_{I},X_{I})$, $h=h_{I}$ and $K_{h}=K_{I,\infty}$. Let $\tau=\Lambda=w_{0}*\lambda$ and $\mathcal{E}=E_{\Lambda}$. As explained in the last second paragraph before Corollary $2.5.9$ in the \textit{loc.cit}, we may identify the holomorphic sections of $V_{\Lambda}$ with holomorphic sections of the dual of $\overline{V_{\Lambda}}$. The complex conjugation then sends the latter to the anti-holomorphic sections of $\widecheck{V_{\Lambda}}=V_{\Lambda^{*}}$. The latter can be identified with harmonic (0,d)-forms  with values in $\mathbb{K}\otimes E_{\Lambda^{\*}}$ where $\mathbb{K}=\Omega^{D/2}_{Sh_{I}}$ is the canonical line bundle of $Sh_{I}$. \\

By $2.2.9$ of \cite{harris97} we have $\mathbb{K}=E_{(0,(-s_{\sigma},\cdots,-s_{\sigma},r_{\sigma},\cdots,r_{\sigma})_{\sigma\in\Sigma})}$ where the number of $-s_{\sigma}$ in the last term is $r_{\sigma}$. Therefore, complex conjugation gives an isomorphism:
\begin{equation}
c_{B}: \bar{H}^{0}(Sh_{I},E_{\Lambda}) \xrightarrow{\sim}   \bar{H}^{D/2}(Sh_{I},E_{\Lambda^{*}+0,(-s_{\sigma},\cdots,-s_{\sigma},r_{\sigma},\cdots,r_{\sigma})_{\sigma\in\Sigma})}).
\end{equation}

Recall equation (\ref{twist by w0}) that 
\begin{equation}\nonumber\Lambda=w_{0}*\lambda=(\lambda_{0},(\lambda_{s_{\sigma}+1}(\sigma)-s_{\sigma},\cdots,\lambda_{n}(\sigma)-s_{\sigma};\lambda_{1}(\sigma)+r_{\sigma},\cdots,\lambda_{s_{\sigma}}(\sigma)+r_{\sigma})_{\sigma\in\Sigma}).
\end{equation} 

We have \begin{equation}
\Lambda^{*}=(-\lambda_{0},(-\lambda_{n}(\sigma)+s_{\sigma},\cdots,-\lambda_{s_{\sigma}+1}(\sigma)+s_{\sigma};-\lambda_{s_{\sigma}}(\sigma)-r_{\sigma},\cdots,-\lambda_{1}(\sigma)+r_{\sigma})_{\sigma\in\Sigma}).
\end{equation}

Therefore, $\Lambda^{*}+(0,(-s_{\sigma},\cdots,-s_{\sigma},r_{\sigma},\cdots,r_{\sigma})_{\sigma\in\Sigma})=\lambda^{\vee}$. We finally get equation (\ref{definition cB}).

Similarly, if we start from the anti-holomorphic part, we will get a $K$-antilinear isomorphism which is still denoted by $c_{B}$:
\begin{equation} 
c_{B}: \bar{H}^{D/2}(Sh_{I},E_{\lambda}) \xrightarrow{\sim}   \bar{H}^{0}(Sh_{I},E_{w_{0}*\lambda^{\vee}})
\end{equation} which sends anti-holomorphic elements with respect to $\lambda$ to holomorphic elements for $\lambda^{\vee}$.

\subsection{The rational paring}
Let $\Lambda\in \Lambda(K_{I,\infty})$. We write $V=V_{\Lambda}$ in this section for simplicity.  As in section $2.6.11$ of \cite{harris97}, we denote by $\C_{\Lambda}$ the corresponding highest weight space. We know $\Lambda^{*}:=\Lambda^{\#}-(2\Lambda_{0},(0))$ is the tuple associated to $\widecheck{V}$, the dual of this $K_{I}$ representation. We denote by $\C_{-\Lambda}$ the lowest weight of $\widecheck{V}$.

The restriction from $V$ to $\C_{\Lambda}$ gives an isomorphism
\begin{equation}
Hom_{K_{I,\infty}}(V,\mathcal{C}^{\infty}(GU_{I}(\F)\backslash GU_{I}(\AF))) \xrightarrow{\sim} Hom_{H}(\C_{\Lambda},\mathcal{C}^{\infty}(GU_{I}(\F)\backslash GU_{I}(\AF))_{V}) 
\end{equation}
where $\mathcal{C}^{\infty}(GU_{I}(\F)\backslash GU_{I}(\AF))_{V}$ is the $V$-isotypic subspace of $\mathcal{C}^{\infty}(GU_{I}(\F)\backslash GU_{I}(\AF))$.

Similarly, we have \begin{equation}\label{trivialization}
Hom_{K_{I,\infty}}(\widecheck{V},\mathcal{C}^{\infty}(GU_{I}(\F)\backslash GU_{I}(\AF))) \xrightarrow{\sim} Hom_{H}(\C_{-\Lambda},\mathcal{C}^{\infty}(GU_{I}(\F)\backslash GU_{I}(\AF))_{\widecheck{V}}) 
\end{equation}

Proposition $2.6.12$ of \cite{harris97} says that up to a rational factor the perfect paring
\begin{equation}
Hom_{H}(\C_{\Lambda},\mathcal{C}^{\infty}(GU_{I}(\F)\backslash GU_{I}(\AF))_{V}) \times Hom_{H}(\C_{-\Lambda},\mathcal{C}^{\infty}(GU_{I}(\F)\backslash GU_{I}(\AF))_{\widecheck{V}}) 
\end{equation}
given by integration over the diagonal equals to restriction of the canonical paring (c.f. $(2.6.11.4)$ of \cite{harris97})
 \begin{eqnarray}
 &&Hom_{K_{I,\infty}}(V,\mathcal{C}^{\infty}(GU_{I}(\F)\backslash GU_{I}(\AF))) \times Hom_{K_{I,\infty}}(\widecheck{V},\mathcal{C}^{\infty}(GU_{I}(\F)\backslash GU_{I}(\AF))) 
\nonumber\\
&\rightarrow& Hom_{K_{I,\infty}}(V\otimes \widecheck{V},\mathcal{C}^{\infty}(GU_{I}(\F)\backslash GU_{I}(\AF)))\nonumber\\
&\rightarrow& Hom_{K_{I,\infty}}(\C,\mathcal{C}^{\infty}(GU_{I}(\F)\backslash GU_{I}(\AF)))\nonumber\\
&\rightarrow& \C.
 \end{eqnarray}

 We identify $\Gamma^{\infty}(Sh_{I},E_{\Lambda})$ with $Hom_{GU_{I}K_{I,\infty}}(\widecheck{V},\mathcal{C}^{\infty}(GU_{I}(\F)\backslash GU_{I}(\AF)))$ and  regard the latter as subspace of $Hom_{K_{I,\infty}}(\widecheck{V},\mathcal{C}^{\infty}(GU_{I}(\F)\backslash GU_{I}(\AF)))$.
 
 The above construction gives a $K$-rational perfect paring between holomorphic sections of $E_{\Lambda}$ and anti-holomorphic sections of $E_{\Lambda^{*}}$.
  
 If $\Lambda=w_{0}*\lambda$, as we have seen in Section \ref{Complex conjugation} that the anti-holomorphic sections of $E_{\Lambda^{*}}$ can be identified with harmonic $(0,d)$-forms with values in $E_{\lambda^{\vee}}$.
 
 We therefore obtain a $K$-rational perfect paring 
 \begin{equation}
 \Phi=\Phi^{I,\lambda}: \bar{H}^{0}(Sh_{I},E_{w_{0}*\lambda}) \times \bar{H}^{D/2}(Sh_{I},E_{\lambda^{\vee}})\rightarrow \C. 
 \end{equation}
 In other words, there is a rational paring between the holomorphic elements for $(I,\lambda)$ and anti-holomorphic elements for $(I,\lambda^{\vee})$.

It is easy to see that the isomorphism $Sh_{I}\xrightarrow{\sim} Sh_{I^{c}}$ commutes with the above paring and hence:

 \begin{lem}\label{period stable under complex conjugation}
For any $f\in  \bar{H}^{0}(Sh_{I},E_{w_{0}*\lambda}) $ and $g\in \bar{H}^{D/2}(Sh_{I},E_{\lambda^{\vee}})$, we have $\Phi^{I,\lambda}(f,g)=\Phi^{I^{c},\lambda^{c}}(c_{DR}f,c_{DR}g)$.
 \end{lem}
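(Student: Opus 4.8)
The plan is to unwind the definition of the pairing $\Phi^{I,\lambda}$ through the chain of identifications of Section~2.5 and Section~2.6, and to check at each stage that the complex-conjugation isomorphism $Sh_I\xrightarrow{\sim}Sh_{I^c}$ is compatible with every map involved. Recall that $\Phi^{I,\lambda}$ was built in three layers: first the canonical $K_{I,\infty}$-equivariant pairing $V_\Lambda\otimes\widecheck{V_\Lambda}\to\C$ on homogeneous vector bundles (equation $(2.6.11.4)$ of \cite{harris97}), then the translation of this into a pairing of Hom-spaces $Hom_H(\C_\Lambda,-)\times Hom_H(\C_{-\Lambda},-)$ given by integration over the diagonal of $GU_I(\F)\backslash GU_I(\AF)$, and finally the identification of anti-holomorphic sections of $E_{\Lambda^*}$ with harmonic $(0,d)$-forms valued in $E_{\lambda^\vee}$ via $c_B$ and the computation $\Lambda^*+(0,(-s_\sigma,\dots,r_\sigma)_\sigma)=\lambda^\vee$.

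First I would record that the anti-holomorphic isomorphism $c\colon X_I\xrightarrow{\sim}X_{I^c}$ induced by complex conjugation on the symmetric space carries $h_I$ to $h_{I^c}$, identifies $GU_I$ with $GU_{I^c}$ as $\Q$-groups, and, being defined over $K$ up to conjugation (it is the map already used to produce $c_{DR}$ in Section~2.4 and Section~2.5), is compatible with the canonical trivializations of the automorphic vector bundles at CM points. Since the canonical pairing $(2.6.11.4)$ is characterised by rationality with respect to those trivializations, it is preserved by $c$; concretely, $w_0*\lambda$ for $I$ goes to $w_0^c*\lambda^c$ for $I^c$, and $\lambda^\vee$ for $I$ goes to $(\lambda^c)^\vee$ for $I^c$. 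Second, I would observe that the integration-over-the-diagonal pairing is manifestly invariant under any automorphism of $GU_I(\F)\backslash GU_I(\AF)$ that matches up the relevant isotypic subspaces, and the isomorphism $Sh_I\xrightarrow{\sim}Sh_{I^c}$ induces exactly such an automorphism (it is the identity on $GU_I(\AQf)$ and complex conjugation at the infinite place). Third, I would check that $c_B$ — i.e.\ the passage from holomorphic sections of $E_{w_0*\lambda}$ to anti-holomorphic sections of $E_{\lambda^\vee}$, equivalently to harmonic $(0,d)$-forms — commutes with $c_{DR}$; this is essentially the statement, already invoked implicitly in Section~2.5, that the two complex conjugations (on the Shimura variety and on the automorphic forms) are compatible, which in turn follows from the functoriality of the homogeneous-vector-bundle functor under the inclusion of the reflex data and its behaviour under the map $I\mapsto I^c$.

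Assembling these three compatibilities gives, for $f\in\bar H^0(Sh_I,E_{w_0*\lambda})$ and $g\in\bar H^{D/2}(Sh_I,E_{\lambda^\vee})$, the chain $\Phi^{I,\lambda}(f,g)=\langle f,g\rangle_{\mathrm{diag}}=\langle c f, c g\rangle_{\mathrm{diag}}=\Phi^{I^c,\lambda^c}(c_{DR}f,c_{DR}g)$, where the middle equality is the invariance of the diagonal integral and the outer equalities are the two translations of $\Phi$ into that integral for the data $(I,\lambda)$ and $(I^c,\lambda^c)$ respectively. The main obstacle I anticipate is purely bookkeeping: verifying that all three versions of "complex conjugation" in play here — the anti-holomorphic map of symmetric spaces underlying $c_{DR}$, the map $c_B$ coming from conjugation of automorphic forms, and the conjugation $\overline{\mathcal E}\mapsto\mathcal E^\vee\otimes\mathcal E_\nu$ of Proposition~$2.5.8$ of \cite{harris97} — are the \emph{same} map once one tracks through the index shifts $\Lambda\mapsto\Lambda^*$, $I\mapsto I^c$, and $\lambda\mapsto\lambda^c$; granting that (which is the content of the cited results of \cite{harris97}), the lemma is immediate.
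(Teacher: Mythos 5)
Your proposal is correct and follows exactly the paper's approach: the paper dispatches this lemma with the single observation that the isomorphism $Sh_{I}\xrightarrow{\sim} Sh_{I^{c}}$ commutes with the pairing, and your argument is a faithful unwinding of that observation through the three layers in the construction of $\Phi^{I,\lambda}$ (the canonical $K_{I,\infty}$-equivariant pairing, the diagonal integral, and the identification of anti-holomorphic sections with harmonic $(0,d)$-forms).
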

  
  The next lemma follows from Corollary $2.5.9$ and Lemma $2.8.8$ of \cite{harris97}.
  
 \begin{lem}
 Let $0\neq f\in \bar{H}^{0}(Sh_{I},E_{w_{0}*\lambda})$. We have
 $\Phi(f,c_{B}f)\neq 0$.
 
 More precisely, if we consider $f$ as an element in $$Hom_{K_{I,\infty}}(\widecheck{V},\mathcal{C}^{\infty}(GU_{I}(\F)\backslash GU_{I}(\AF)))$$ then by (\ref{trivialization}) and the fixed trivialization of $\C_{-w_{0}*\lambda}$, we may consider $f$ as an element in $\mathcal{C}^{\infty}(GU_{I}(\F)\backslash GU_{I}(\AF)))$. We have:
 \begin{equation}
 \Phi(f,c_{B}f) =\pm i^{\lambda_{0}}\int\limits_{GU_{I}(\Q)Z_{GU_{I}}(\AQ)\backslash GU_{I}(\AQ)} f(g)\overline{f}(g)||\nu(g)||^{c}dg.
 \end{equation}
 Recall that $\nu(\cdot)$ is the similitude defined in (\ref{definition of nu(g)}).
 \end{lem}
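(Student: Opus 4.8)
The plan is to make both ingredients of $\Phi(f,c_{B}f)$ completely explicit at the level of scalar automorphic forms, where the expression becomes a twisted Petersson norm, and then read off non-vanishing from positivity. Write $\Lambda=w_{0}*\lambda$. Fix once and for all the trivialization of the weight line $\C_{-w_{0}*\lambda}$ used in $(\ref{trivialization})$, so that $f\in\bar{H}^{0}(Sh_{I},E_{w_{0}*\lambda})$, regarded via $(\ref{trivialization})$ inside $Hom_{K_{I,\infty}}(\widecheck{V},\mathcal{C}^{\infty}(GU_{I}(\Q)\backslash GU_{I}(\AQ)))$, corresponds to a single scalar function $\varphi_{f}$ on $GU_{I}(\Q)\backslash GU_{I}(\AQ)$, exactly as in the statement; use the same convention for $c_{B}f$, obtaining $\varphi_{c_{B}f}$.

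First I would unwind $\Phi$. After the identification $(\ref{trivialization})$, Proposition $2.6.12$ of \cite{harris97} says that the rational pairing $\Phi$ between holomorphic sections of $E_{w_{0}*\lambda}$ and anti-holomorphic sections of $E_{\lambda^{\vee}}$ (the latter viewed, as in Section \ref{Complex conjugation}, as harmonic $(0,D/2)$-forms valued in $E_{\lambda^{\vee}}$) is, up to a factor $\kappa\in K^{\times}$ depending only on the choices of trivialization, computed by multiplying the two associated scalar functions and integrating over $GU_{I}(\Q)Z_{GU_{I}}(\AQ)\backslash GU_{I}(\AQ)$. Hence
\begin{equation}\nonumber
\Phi(f,c_{B}f)=\kappa\int\limits_{GU_{I}(\Q)Z_{GU_{I}}(\AQ)\backslash GU_{I}(\AQ)}\varphi_{f}(g)\,\varphi_{c_{B}f}(g)\,dg .
\end{equation}

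Next I would identify $\varphi_{c_{B}f}$. The map $c_{B}$ was built in Section \ref{Complex conjugation} from the non-degenerate $GU_{I}(\AQf)$-equivariant pairing $E_{\Lambda}\otimes\overline{E_{\Lambda}}\to E_{\nu}$ of Proposition $2.5.8$ of \cite{harris97}, whose pullback to CM points is rational for the canonical trivializations, followed by the identification of $E_{\Lambda^{*}}$-valued $(0,D/2)$-forms with $E_{\lambda^{\vee}}$-valued ones via $\mathbb{K}=E_{(0,(-s_{\sigma},\dots,-s_{\sigma},r_{\sigma},\dots,r_{\sigma})_{\sigma})}$. At the level of scalar functions this amounts precisely to $\varphi_{c_{B}f}(g)=\overline{\varphi_{f}(g)}\,||\nu(g)||^{c}$, up to the explicit archimedean normalization of Corollary $2.5.9$ of \cite{harris97}, which contributes the scalar $\pm i^{\lambda_{0}}$ — here the power of $||\nu||$ is the one dictated by the line bundle $E_{\nu}$ of Proposition $2.5.8$, chosen so that the integrand $f\overline{f}||\nu||^{c}$ descends to $Z_{GU_{I}}(\AQ)\backslash GU_{I}(\AQ)$, and the power of $i$ comes from the Weil, equivalently Hodge-$\ast$, operator at the infinite places. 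Substituting into the displayed formula and absorbing $\kappa$ into the choice of trivialization — this last bit of bookkeeping is Lemma $2.8.8$ of \cite{harris97} — yields
\begin{equation}\nonumber
\Phi(f,c_{B}f)=\pm i^{\lambda_{0}}\int\limits_{GU_{I}(\Q)Z_{GU_{I}}(\AQ)\backslash GU_{I}(\AQ)}f(g)\overline{f}(g)\,||\nu(g)||^{c}\,dg ,
\end{equation}
the asserted identity.

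Non-vanishing is then immediate: the integrand equals $|\varphi_{f}(g)|^{2}\,||\nu(g)||^{c}$, which is non-negative and not identically zero because $f\neq 0$, while $||\nu(g)||^{c}>0$ everywhere; moreover $f\in\bar{H}^{0}(Sh_{I},E_{w_{0}*\lambda})\subset H^{0}(\mathfrak{P},K_{I,\infty};\mathcal{A}_{(2)}\otimes V_{w_{0}*\lambda})$ forces $\varphi_{f}$ to be square-integrable modulo the center, so the integral converges and is strictly positive. I expect the real work to be bookkeeping rather than conceptual: one must check that the three successive identifications entering the definition of $\Phi$ (holomorphic sections of $E_{w_{0}*\lambda}$, anti-holomorphic sections of $E_{\Lambda^{*}}$, harmonic $(0,D/2)$-forms valued in $E_{\lambda^{\vee}}$) are the \emph{same} identifications used to define $c_{B}$, so that the two ``up to $K^{\times}$'' ambiguities are compatible and collapse to the single clean constant $\pm i^{\lambda_{0}}$; pinning down the sign and the exact power of $i$ is exactly where the archimedean computation behind Corollary $2.5.9$ of \cite{harris97} is doing the work.
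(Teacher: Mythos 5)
Your proof is correct and takes essentially the same approach as the paper, which simply points to Corollary~$2.5.9$ and Lemma~$2.8.8$ of \cite{harris97}; you unwind the pairing via Proposition~$2.6.12$ (already invoked in the paper's construction of $\Phi$), identify $c_{B}$ at the scalar level as $\pm i^{\lambda_0}\overline{\varphi_{f}}\,\|\nu\|^{c}$ via Proposition~$2.5.8$/Corollary~$2.5.9$, and read off non-vanishing from the positivity and convergence of the resulting $L^2$-norm integral. This is the intended chain of references; the only caveat, which you already flag, is that the exact constant $\pm i^{\lambda_0}$ and the compatibility of the several trivializations are outsourced to Harris's archimedean computation rather than re-derived.
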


Similarly, if we start from anti-holomorphic elements, we get a paring:
\begin{equation}\Phi^{-}=\Phi^{I,\lambda,-}: \bar{H}^{D/2}(Sh_{I},E_{\lambda}) \times \bar{H}^{0}(Sh_{I},E_{w_{0}*\lambda^{\vee}})\rightarrow \C. \end{equation}
We use the script $\text{}^{-}$ to indicate that is anti-holomorphic. It is still $c_{DR}$ stable. For $0\neq f^{-} \in \bar{H}^{D/2}(Sh_{I},E_{\lambda}) $, we also know that $\Phi^{-}(f^{-},c_{B}f^{-})\neq 0$.

\subsection{Arithmetic automorphic periods}

Let $\pi$ be an irreducible cuspidal representation of $GU_{I}(\AQ)$ defined over a number field $E(\pi)$. We may assume that $E(\pi)$ contains the quadratic imaginary field $K$.

We assume that $\pi$ is cohomological with type $\lambda$, i.e. $H^{*}(\lieG,K_{I,\infty};\pi\otimes W_{\lambda})\neq 0$.

For $M$ a $GU_{I}(\AQf)$-module, define the $K$-rational $\pi_{f}$-isotypic components of $M$ by
\begin{equation}\nonumber
M^{\pi}:=Hom_{GU_{I}(\AFf)}(Res_{E(\pi)/K}(\pi_{f}),M)=\bigoplus\limits_{\tau \in \Sigma_{E(\pi)}} Hom(\pi_{f}^{\tau},M).
\end{equation} 

Therefore, if $M$ has a $K$-rational structure then $M^{\pi}$ also has a $K$-rational structure.

As in section \ref{Complex conjugation}, we have inclusions:
\begin{equation}\nonumber    H^{q}(\mathfrak{P},K_{I,\infty};\mathcal{A}_{0}^{\pi}\otimes V_{\Lambda})\subset \bar{H}^{q}(Sh_{I},E_{\Lambda})^{\pi}\subset H^{q}(\mathfrak{P},K_{I,\infty};\mathcal{A}_{(2)}^{\pi}\otimes V_{\Lambda}).\end{equation}

Under these inclusions, $c_{B}$ sends $\bar{H}^{0}(Sh_{I},E_{w_{0}*\lambda})^{\pi}$ to $\bar{H}^{D/2}(Sh_{I},E_{\lambda^{\vee}})^{\pi^{\vee}}$.

These inclusions are compatible with those $K$-rational structures and then induce $K$-rational parings
\begin{eqnarray}\Phi^{\pi}: \bar{H}^{0}(Sh_{I},E_{w_{0}*\lambda})^{\pi} \times \bar{H}^{D/2}(Sh_{I},E_{\lambda^{\vee}})^{\pi^{\vee}}\rightarrow \C\\
 \text{and }
\Phi^{-,\pi}: \bar{H}^{D/2}(Sh_{I},E_{\lambda})^{\pi} \times \bar{H}^{0}(Sh_{I},E_{w_{0}*\lambda^{\vee}})^{\pi^{\vee}}\rightarrow \C. \end{eqnarray}

\begin{df}
Let $\beta$ be a non zero $K$-rational element of $\bar{H}^{0}(Sh_{I},E_{w_{0}*\lambda})^{\pi}$. We define the \textbf{holomorphic arithmetic automorphic period associated to } $\beta$ by $P^{(I)}(\beta,\pi):=(\Phi^{\pi(}\beta^{\tau},c_{B}\beta^{\tau}))_{\tau\in\Sigma_{E(\pi)}}$. It is an element in $(E(\pi)\otimes_{K}\C)^{\times}$.

Let $\gamma$ be a non zero $K$-rational element of $\bar{H}^{D/2}(Sh_{I},E_{\lambda})^{\pi}$. We define the \textbf{anti-holomorphic arithmetic automorphic period associated to} $\gamma$ by $P^{(I),-}(\gamma,\pi):=(\Phi^{-,\pi}(\gamma^{\tau},c_{B}\gamma^{\tau}))_{\tau\in\Sigma_{E(\pi)}}$. It is an element in $(E(\pi)\otimes_{K}\C)^{\times}$.

\end{df}

\begin{dflem}
Let us assume now $\pi$ is tempered and $\pi_{\infty}$ is discrete series representation. In this case, $\bar{H}^{0}(Sh_{I},E_{w_{0}*\lambda})^{\pi}$ is a rank one $E(\pi)\otimes_{K} \C$-module (c.f. \cite{endoscopicfour}). \\

We define the \textbf{holomorphic arithmetic automorphic period} of $\pi$ by $P^{(I)}(\pi):= P^{(I)}(\beta,\pi)$ by taking $\beta$ any non zero rational element in $\bar{H}^{0}(Sh_{I},E_{w_{0}*\lambda})^{\pi}$. It is an element in $(E(\pi)\otimes_{K}\C)^{\times}$ well defined up to $E(\pi)^{\times}$-multiplication.

We define $P^{(I),-}(\pi)$ the \textbf{anti-holomorphic arithmetic automorphic period} of $\pi$ similarly.
\end{dflem}

\begin{lem}\label{pair to ratio}
We assume that $\pi$ is tempered and $\pi_{\infty}$ is discrete series representation. Let $\beta$ be a non zero rational element in $\bar{H}^{0}(Sh_{I},E_{w_{0}*\lambda})^{\pi}$ and $\beta^{\vee}$ be a non zero rational element in $\bar{H}^{0}(Sh_{I},E_{\lambda}^{\vee})^{\pi^{\vee}}$. 

We have $c_{B}(\beta)\sim_{E(\pi)} P^{(I)}(\pi) \beta^{\vee}$.
\end{lem}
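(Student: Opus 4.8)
The plan is to use that, for $\pi$ tempered with $\pi_{\infty}$ in the discrete series, both $\bar{H}^{0}(Sh_{I},E_{w_{0}*\lambda})^{\pi}$ and its $c_{B}$-image $\bar{H}^{D/2}(Sh_{I},E_{\lambda^{\vee}})^{\pi^{\vee}}$ are free of rank one over $E(\pi)\otimes_{K}\C$ (the rank-one statement recorded in the preceding Definition-Lemma, applied to $\pi$ and to its dual, which is again tempered with discrete-series archimedean component), and that these two rank-one modules are put in perfect duality by the $K$-rational pairing $\Phi^{\pi}$ of the previous subsection, with $P^{(I)}(\pi)$ equal, by its very definition, to $\Phi^{\pi}(\beta,c_{B}\beta)$ up to $E(\pi)^{\times}$. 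Since $c_{B}(\beta)$ and $\beta^{\vee}$ lie in the same rank-one $E(\pi)\otimes_{K}\C$-module $\bar{H}^{D/2}(Sh_{I},E_{\lambda^{\vee}})^{\pi^{\vee}}$ (the target of $c_{B}$ recorded above), there is a unique $c\in(E(\pi)\otimes_{K}\C)^{\times}$ with $c_{B}(\beta)=c\,\beta^{\vee}$, and the whole content of the lemma is the identification $c\sim_{E(\pi)}P^{(I)}(\pi)$.

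To pin down $c$, I would apply $\Phi^{\pi}(\beta,-)$ to the relation $c_{B}(\beta)=c\,\beta^{\vee}$, obtaining
\[
\Phi^{\pi}(\beta,c_{B}\beta)=c\cdot\Phi^{\pi}(\beta,\beta^{\vee}).
\]
The left-hand side is $P^{(I)}(\pi)$ by definition (well defined up to $E(\pi)^{\times}$). For the right-hand side: since $\Phi^{\pi}$ is $K$-rational for the given rational structures and $\beta$, $\beta^{\vee}$ are $K$-rational, the family $\Phi^{\pi}(\beta,\beta^{\vee})$ lies in $E(\pi)$ inside $E(\pi)\otimes_{K}\C$; and it is nonzero because $\Phi^{\pi}$ is a perfect pairing between the two rank-one modules while a nonzero rational element of such a module is a module generator (being a unit of the field $E(\pi)$, its rational structure being free of rank one over $E(\pi)$). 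Hence $\Phi^{\pi}(\beta,\beta^{\vee})\in E(\pi)^{\times}$, and $c=P^{(I)}(\pi)\cdot\Phi^{\pi}(\beta,\beta^{\vee})^{-1}\sim_{E(\pi)}P^{(I)}(\pi)$, which is the claim. The $Aut(\C)$-equivariant refinement, if wanted, comes for free by running the same identity $\tau$-componentwise and using the $Aut(\C)$-equivariance of $c_{B}$, of $\Phi^{\pi}$, and of the rational structures involved.

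The ordered steps are thus: (i) invoke the rank-one statements for $\pi$ and $\pi^{\vee}$; (ii) write $c_{B}(\beta)=c\,\beta^{\vee}$ with $c$ a unit of $E(\pi)\otimes_{K}\C$; (iii) pair with $\beta$ and identify the left side with $P^{(I)}(\pi)$; (iv) show $\Phi^{\pi}(\beta,\beta^{\vee})\in E(\pi)^{\times}$. The only genuinely non-formal point is the non-vanishing in step (iv): it relies on the restriction of the $K$-rational pairing $\Phi$ of the previous subsection to the $\pi$- and $\pi^{\vee}$-isotypic pieces staying non-degenerate, which is exactly where the temperedness and discrete-series hypotheses enter (through multiplicity one, hence the rank-one structure); everything else is bookkeeping with the rational structures already in place.
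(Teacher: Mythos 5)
Your proof is correct and follows the same route as the paper, whose entire proof is the one-line observation that $\Phi^{\pi}(\beta,\beta^{\vee})\in E(\pi)^{\times}$; your write-up simply makes explicit the bookkeeping (rank-one modules for $\pi$ and $\pi^{\vee}$, writing $c_B(\beta)=c\,\beta^{\vee}$, pairing with $\beta$, and invoking the $K$-rationality and non-degeneracy of $\Phi^{\pi}$) that the paper leaves implicit. One remark: as you implicitly (and correctly) do, $\beta^{\vee}$ should be read as a rational element of $\bar{H}^{D/2}(Sh_{I},E_{\lambda^{\vee}})^{\pi^{\vee}}$, the target of $c_B$ and the second argument of $\Phi^{\pi}$; the space printed in the lemma, $\bar{H}^{0}(Sh_{I},E_{\lambda}^{\vee})^{\pi^{\vee}}$, is a typo.
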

\begin{dem}
It is enough to notice that $\Phi^{\pi}(\beta,\beta^{\vee})\in E(\pi)^{\times}$.
\end{dem}

\begin{lem}\label{lemma inverse}
If $\pi$ is tempered and $\pi_{\infty}$ is discrete series representation then we have:
\begin{enumerate}
\item $P^{(I^{c})}(\pi^{c})\sim_{E(\pi)} P^{(I)}(\pi)$.
\item $P^{(I)}(\pi^{\vee})*P^{(I),-}(\pi)\sim_{E(\pi)}1$.
\end{enumerate}
\end{lem}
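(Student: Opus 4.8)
The plan is to deduce both assertions from the construction of the periods via the rational parings $\Phi^{\pi}$ and $\Phi^{-,\pi}$ together with the $c_{DR}$- and $c_{B}$-invariance properties established in Lemmas \ref{period stable under complex conjugation} and the surrounding discussion. For part (1), I would begin from the $K$-antilinear rational isomorphism $c_{DR}\colon \bar{H}^{0}(Sh_{I},E_{w_{0}*\lambda})\xrightarrow{\sim}\bar{H}^{0}(Sh_{I^{c}},E_{w_{0}^{c}*\lambda^{c}})$, and check that on $\pi_{f}$-isotypic components it identifies $\bar{H}^{0}(Sh_{I},E_{w_{0}*\lambda})^{\pi}$ with $\bar{H}^{0}(Sh_{I^{c}},E_{w_{0}^{c}*\lambda^{c}})^{\pi^{c}}$, because complex conjugation sends $\pi_{f}$ to $\pi_{f}^{c}$ at the finite places (here one uses that $\pi_{f}^{c}\cong\pi_{f}$ as abstract $GU_{I^{c}}(\AQf)=GU_{I}(\AQf)$-module up to the conjugation identification, so the isotypic decomposition is permuted compatibly). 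Picking a nonzero $K$-rational $\beta\in\bar{H}^{0}(Sh_{I},E_{w_{0}*\lambda})^{\pi}$, its image $c_{DR}\beta$ is a nonzero $K$-rational element of $\bar{H}^{0}(Sh_{I^{c}},E_{w_{0}^{c}*\lambda^{c}})^{\pi^{c}}$, hence computes $P^{(I^{c})}(\pi^{c})$ up to $E(\pi)^{\times}$; since $c_{DR}$ is rational and commutes with $c_{B}$ and with the paring (Lemma \ref{period stable under complex conjugation}), we get $\Phi^{\pi^{c}}(c_{DR}\beta^{\tau},c_{B}c_{DR}\beta^{\tau})=\Phi^{I^{c},\lambda^{c}}(c_{DR}\beta^{\tau},c_{DR}c_{B}\beta^{\tau})=\Phi^{I,\lambda}(\beta^{\tau},c_{B}\beta^{\tau})$ componentwise over $\tau\in\Sigma_{E(\pi)}$, which is exactly $P^{(I)}(\pi)$. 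This yields (1).

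For part (2), the idea is to pair the holomorphic period of $\pi^{\vee}$ against the anti-holomorphic period of $\pi$ using the rational paring $\Phi^{-,\pi}$ on $\bar{H}^{D/2}(Sh_{I},E_{\lambda})^{\pi}\times\bar{H}^{0}(Sh_{I},E_{w_{0}*\lambda^{\vee}})^{\pi^{\vee}}$. Take a nonzero $K$-rational $\gamma\in\bar{H}^{D/2}(Sh_{I},E_{\lambda})^{\pi}$ computing $P^{(I),-}(\pi)$ and a nonzero $K$-rational $\beta^{\vee}\in\bar{H}^{0}(Sh_{I},E_{w_{0}*\lambda^{\vee}})^{\pi^{\vee}}$ computing $P^{(I)}(\pi^{\vee})$. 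By the last line of Section \ref{cohomology} (the paring variant), $\Phi^{-,\pi}(\gamma^{\tau},(\beta^{\vee})^{\tau})\in E(\pi)^{\times}$ for each $\tau$, i.e. $\Phi^{-,\pi}(\gamma,\beta^{\vee})\sim_{E(\pi)}1$ as an $Aut(\C)$-family. On the other hand, $c_{B}$ sends $\bar{H}^{D/2}(Sh_{I},E_{\lambda})^{\pi}$ to $\bar{H}^{0}(Sh_{I},E_{w_{0}*\lambda^{\vee}})^{\pi^{\vee}}$, and since this target is a rank-one $E(\pi)\otimes_{K}\C$-module (Definition-Lemma), $c_{B}\gamma\sim_{E(\pi)}P^{(I),-}(\pi)\,\beta^{\vee}$ by the same argument as in Lemma \ref{pair to ratio}, using that $\Phi^{-,\pi}(\gamma,c_{B}\gamma)$ is, by definition, exactly $P^{(I),-}(\pi)$. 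Substituting, $P^{(I),-}(\pi)=\Phi^{-,\pi}(\gamma,c_{B}\gamma)\sim_{E(\pi)}\Phi^{-,\pi}(\gamma,\beta^{\vee})\cdot P^{(I),-}(\pi)^{-1}\cdot P^{(I),-}(\pi)$? — more cleanly: write $c_{B}\gamma=t\,\beta^{\vee}$ with $t\sim_{E(\pi)}P^{(I),-}(\pi)$, then $P^{(I),-}(\pi)=\Phi^{-,\pi}(\gamma,c_{B}\gamma)=t\,\Phi^{-,\pi}(\gamma,\beta^{\vee})\sim_{E(\pi)}P^{(I),-}(\pi)\cdot\Phi^{-,\pi}(\gamma,\beta^{\vee})$, forcing $\Phi^{-,\pi}(\gamma,\beta^{\vee})\sim_{E(\pi)}1$; but we also want to relate $\beta^{\vee}$ (which computes $P^{(I)}(\pi^{\vee})$) — here one instead evaluates $\Phi^{-,\pi}$ on the pair giving $P^{(I)}(\pi^{\vee})$ directly against $\gamma$ and uses rationality of the paring to conclude $P^{(I)}(\pi^{\vee})\cdot P^{(I),-}(\pi)\sim_{E(\pi)}\Phi^{-,\pi}(\gamma,c_{B}\beta^{\vee})$, the right side being rational because $c_{B}\beta^{\vee}$ lands back in $\bar{H}^{D/2}(Sh_{I},E_{\lambda})^{\pi}$ and pairs rationally with $\gamma$.

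The main obstacle, and the step I would spend the most care on, is bookkeeping the isotypic components and the $E(\pi)\otimes_{K}\C$-module structure correctly under $c_{DR}$, $c_{B}$, and the various twists $\lambda\mapsto\lambda^{c},\lambda^{\vee}$ and $\pi\mapsto\pi^{c},\pi^{\vee}$: one must verify that $c_{B}$ is not merely $\C$-antilinear but intertwines the $E(\pi)$-structures in the way needed for "$\sim_{E(\pi)}$" statements to make sense as $Aut(\C)$-equivariant families, and that the rank-one property from \cite{endoscopicfour} applies to all four spaces in play (for $\pi$ and $\pi^{\vee}$, holomorphic and anti-holomorphic). Once that is pinned down, both identities are immediate consequences of "a rational paring on rank-one modules pairs a generator with its image under a rational isomorphism to a unit," exactly as in Lemma \ref{pair to ratio}. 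I expect no genuinely hard analytic input here — it is entirely a matter of tracking rational structures.
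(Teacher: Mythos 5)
Your argument for part (1) is essentially the paper's: use that $c_{DR}$ is a $K$-rational isomorphism together with Lemma \ref{period stable under complex conjugation}, and observe that $c_{DR}$ commutes with $c_{B}$. That is fine and matches the intended proof.

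Part (2) starts from the correct ingredients (the rational paring $\Phi^{-,\pi}$, which coincides with $\Phi^{\pi^{\vee}}$ up to switching the factors; the rank-one module structure; and the fact that $\Phi^{-,\pi}(\gamma,\beta^{\vee})\sim_{E(\pi)}1$ by rationality of the paring), but the final deduction is broken. First, the expression $\Phi^{-,\pi}(\gamma,c_{B}\beta^{\vee})$ does not type-check: $\gamma$ and $c_{B}\beta^{\vee}$ both live in $\bar{H}^{D/2}(Sh_{I},E_{\lambda})^{\pi}$, whereas $\Phi^{-,\pi}$ pairs that space against $\bar{H}^{0}(Sh_{I},E_{w_{0}*\lambda^{\vee}})^{\pi^{\vee}}$, so this quantity is not defined. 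Second, even setting the typing aside, your assertion that ``$c_{B}\beta^{\vee}$ pairs rationally with $\gamma$'' is not something that can be taken as an observation: $c_{B}$ is $K$-antilinear but does not preserve the $E(\pi)$-rational structures (that is precisely why $\Phi(\beta,c_{B}\beta)$ defines a nontrivial period), so the rationality of such a paring would be exactly the content to be proved. The missing idea is the involutivity $c_{B}^{2}=\mathrm{Id}$: normalizing so that $\Phi^{-,\pi}(\gamma^{\tau},(\beta^{\vee})^{\tau})=1$, one writes $c_{B}\gamma^{\tau}=C(\beta^{\vee})^{\tau}$ with $C=P^{(I),-}(\pi)$ up to $E(\pi)^{\times}$, and then applies $c_{B}$ again to obtain $c_{B}(\beta^{\vee})^{\tau}=C^{-1}\gamma^{\tau}$, whence $P^{(I)}(\pi^{\vee})=\Phi^{\pi^{\vee}}(\beta^{\vee},c_{B}\beta^{\vee})=C^{-1}$. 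Without $c_{B}^{2}=\mathrm{Id}$ you have two equations in $t=P^{(I),-}(\pi)$ and $\beta^{\vee},\gamma$, but no relation between the holomorphic period of $\pi^{\vee}$ and the anti-holomorphic period of $\pi$ ever closes; that is why your paragraph runs in circles before issuing a formula that does not make sense.
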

\begin{dem}
The first part comes from Lemma \ref{period stable under complex conjugation} and the fact that $c_{DR}$ preserves rational structures. 

For the second part, recall that the following two parings are actually the same:
\begin{eqnarray}\Phi^{\pi^{\vee}}: \bar{H}^{0}(Sh_{I},E_{w_{0}*\lambda^{\vee}})^{\pi^{\vee}} \times \bar{H}^{D/2}(Sh_{I},E_{\lambda})^{\pi}\rightarrow \C\\
 \text{and }
\Phi^{-,\pi}: \bar{H}^{D/2}(Sh_{I},E_{\lambda})^{\pi} \times \bar{H}^{0}(Sh_{I},E_{w_{0}*\lambda^{\vee}})^{\pi^{\vee}}\rightarrow \C. \end{eqnarray}

We take $\beta$ a rational element in $\bar{H}^{0}(Sh_{I},E_{w_{0}*\lambda^{\vee}})^{\pi^{\vee}}$ and $\gamma$ a rational element in $\bar{H}^{D/2}(Sh_{I},E_{\lambda})^{\pi}$. We may assume that $\Phi^{\pi^{\vee}}(\beta^{\tau},\gamma^{\tau})=\Phi^{-,\pi}(\gamma^{\tau},\beta^{\tau})=1$ for all $\tau\in \Sigma_{E(\pi)}$.

By definition $p^{(I)}(\pi^{\vee})=(\Phi^{\pi^{\vee}}(\beta^{\tau},c_{B}\beta^{\tau}))_{\tau\in\Sigma_{E(\pi)}}$. Since $\bar{H}^{D/2}(Sh_{I},E_{\lambda})^{\pi}$ is a rank one $E(\pi)\otimes \C$-module, there exists $C \in (E(\pi)\otimes_{K} \C)^{\times}$ such that $(c_{B}\beta^{\tau})_{\tau\in\Sigma_{E(\pi)}}=C (\gamma^{\tau})_{\tau\in\Sigma_{E(\pi)}}$. Therefore $p^{I}(\pi^{\vee})=C (\Phi^{\pi^{\vee}}(\beta^{\tau},\gamma^{\tau}))_{\tau\in\Sigma_{E(\pi)}}=C$.

On the other hand, since $c_{B}^{2}=Id$, we have $(c_{B}\gamma^{\tau})_{\tau\in\Sigma_{E(\pi)}}=C^{-1}(\beta^{\tau})_{\tau\in\Sigma_{E(\pi)}}$. We can deduce that $p^{(I),-}(\pi)=C^{-1}$ as expected.
\end{dem}
\begin{df}
We say $I$ is \textbf{compact} if $U_{I}(\C)$ is. In other words, $I$ is compact if and only if $I(\sigma)=0$ or $n$ for all $\sigma\in\Sigma$.
\end{df}

\begin{cor}\label{unitary similitude product 1}
If $I$ is compact then $P^{(I)}(\pi) \sim_{E(\pi)}P^{(I),-}(\pi)$.
We have $P^{(I)}(\pi^{\vee})*P^{(I)}(\pi)\sim_{E(\pi)}1$.
\end{cor}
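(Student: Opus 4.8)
The plan is to use that compactness of $I$ collapses the geometry of the Shimura variety. If $I(\sigma)\in\{0,n\}$ for every $\sigma\in\Sigma$, then $r_{\sigma}s_{\sigma}=0$ for all $\sigma$, so the dimension $D=2\sum_{\sigma}r_{\sigma}s_{\sigma}$ of $Sh_{I}$ is zero. In that case $R_{c}^{+}=R^{+}$, hence $W_{c}=W$ and $W^{1}=\{\mathrm{id}\}$; in particular the longest element $w_{0}\in W^{1}$, which has length $D/2=0$, is the identity. Consequently $w_{0}*\lambda=\lambda$, so $E_{w_{0}*\lambda}=E_{\lambda}$ and $\bar{H}^{D/2}(Sh_{I},-)=\bar{H}^{0}(Sh_{I},-)$. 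In other words the ``holomorphic part'' and the ``anti-holomorphic part'' of $\bar{H}^{D/2}(Sh_{I},W_{\lambda}^{\triangledown})$ are one and the same space $\bar{H}^{0}(Sh_{I},E_{\lambda})$, carrying its single $K$-rational structure.

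The first step is then to check that, under this identification, the parings $\Phi^{\pi}$ and $\Phi^{-,\pi}$ coincide, and likewise the two maps $c_{B}$ of Section \ref{Complex conjugation} (the one issuing from the holomorphic side and the one from the anti-holomorphic side) coincide: both parings become the same $K$-rational perfect paring $\bar{H}^{0}(Sh_{I},E_{\lambda})^{\pi}\times\bar{H}^{0}(Sh_{I},E_{\lambda^{\vee}})^{\pi^{\vee}}\to\C$ — this is exactly the identification already invoked in the proof of Lemma \ref{lemma inverse} — and both $c_{B}$'s become the same map $\bar{H}^{0}(Sh_{I},E_{\lambda})^{\pi}\to\bar{H}^{0}(Sh_{I},E_{\lambda^{\vee}})^{\pi^{\vee}}$ induced by complex conjugation of automorphic forms. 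Fixing one nonzero $K$-rational generator $\beta$ of the rank-one $E(\pi)\otimes_{K}\C$-module $\bar{H}^{0}(Sh_{I},E_{w_{0}*\lambda})^{\pi}=\bar{H}^{D/2}(Sh_{I},E_{\lambda})^{\pi}$ and reading off the definitions, one gets $P^{(I)}(\pi)=(\Phi^{\pi}(\beta^{\tau},c_{B}\beta^{\tau}))_{\tau}=(\Phi^{-,\pi}(\beta^{\tau},c_{B}\beta^{\tau}))_{\tau}=P^{(I),-}(\pi)$, hence in particular $P^{(I)}(\pi)\sim_{E(\pi)}P^{(I),-}(\pi)$. (By the preceding Definition-Lemma each period is anyway independent of the chosen generator up to $E(\pi)^{\times}$, so the relation would hold even without using the same $\beta$ on both sides.)

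The second assertion is then purely formal. Lemma \ref{lemma inverse}(2) gives $P^{(I)}(\pi^{\vee})*P^{(I),-}(\pi)\sim_{E(\pi)}1$, and substituting the relation just established, $P^{(I),-}(\pi)\sim_{E(\pi)}P^{(I)}(\pi)$, yields $P^{(I)}(\pi^{\vee})*P^{(I)}(\pi)\sim_{E(\pi)}1$. Substituting one factor inside the product is legitimate because all the periods involved lie in $(E(\pi)\otimes_{K}\C)^{\times}$, hence are nonzero, so $\sim_{E(\pi)}$ composes transitively on them; the $Aut(\C)$-equivariance of the final relation is inherited from that of the two relations being combined.

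I do not anticipate a genuine obstacle here: the entire content is the dimension count $D=0$ and the ensuing equality $w_{0}=\mathrm{id}$. The only point that needs a little care is verifying that, once $D=0$, the holomorphic and anti-holomorphic constructions of the previous two subsections really produce the same paring and the same $c_{B}$ on the common underlying space rather than two a priori distinct objects — but this coincidence is precisely the one already used in the proof of Lemma \ref{lemma inverse}, so no new input is required.
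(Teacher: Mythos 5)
Your proof is correct and follows the same route as the paper's: compactness forces $D=0$, hence $w_{0}=\mathrm{id}$ and the holomorphic and anti-holomorphic parts coincide, giving $P^{(I)}(\pi)\sim_{E(\pi)}P^{(I),-}(\pi)$; the second assertion then follows from Lemma \ref{lemma inverse}. Your write-up simply spells out the intermediate steps (the Weyl-group count and the coincidence of the parings and of $c_{B}$) that the paper leaves implicit.
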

\begin{dem}
If $I$ is compact, then $w_{0}=Id$. The anti-holomorphic part and holomorphic part are the same. We then have $P^{(I)}(\pi) \sim_{E(\pi)}P^{(I),-}(\pi)$. The last assertion comes from Lemma \ref{lemma inverse}.
\end{dem}

 The following theorem is Theorem $4.3.3$ of \cite{guerberofflin} which generalizes the main theorem of \cite{guerbperiods} and \cite{harris97}:
 \begin{thm}\label{main theorem CM}\label{n*1}
 Let $\Pi$ be a regular, conjugate self-dual, cohomological, cuspidal automorphic representation of $GL_{n}(\AF)$ which descends to $U_{I}(\AFp)$ for any $I$. We denote the infinity type of $\Pi$ at $\sigma\in \Sigma$ by $(z^{a_{i}(\sigma)}\overline{z}^{-a_{i}(\sigma)})_{1\leq i\leq n}$. 

 Let $\eta$ be an algebraic Hecke character of $F$ with infinity type $z^{a(\sigma)}\overline{z}^{b(\sigma)}$ at $\sigma\in \Sigma$. We know that $a(\sigma)+b(\sigma)$ is a constant independent of $\sigma$, denoted by $-\omega(\eta)$.
 
 We suppose that $a(\sigma)-b(\sigma)+2a_{i}(\sigma)\neq 0$ for all $1\leq i\leq n$ and $\sigma\in \Sigma$. We define $I:=I(\Pi,\eta)$ to be the map on $\Sigma$ which sends $\sigma\in\Sigma$ to $I(\sigma):=\#\{i:a(\sigma)-b(\sigma)+2a_{i}(\sigma)<0\}$. 
 
Let $m\in \Z+\cfrac{n-1}{2}$. If $m\geq \cfrac{1+\omega(\eta)}{2}$ is critical for $\Pi\otimes \eta$, we have:

\begin{equation}\label{main result CM}
L(m,\Pi\otimes \eta) \sim_{E(\Pi)E(\eta)} (2 \pi i)^{mnd} P^{(I(\Pi,\eta))}(\Pi) \prod\limits_{\sigma\in\Sigma}p(\widecheck{\eta},\sigma)^{I(\sigma)}p(\widecheck{\eta},\overline{\sigma})^{n-I(\sigma)}.
\end{equation}
and is equivariant under the action of $F^{Gal}$.
Here $E(\Pi)$ is the compositum of all $E(\pi)$ when $I$ varies among all the signatures.
 \end{thm}

 The aim of this paper is to prove the following conjecture which generalizes a conjecture of Shimura (\cite{shimura83}):
 \begin{conj}\label{factorization conjecture}
 There exists some non zero complex numbers $P^{(s)}(\Pi,\sigma)$ for all $0\leq s\leq n$ and $\sigma\in\Sigma$ such that $P^{(I)}(\Pi) \sim_{E(\Pi)} \prod\limits_{\sigma\in\Sigma}P^{(I(\sigma))}(\Pi,\sigma)$ for all $I=(I(\sigma))_{\sigma\in\Sigma}\in \{0,1,\cdots,n\}^{\Sigma}$.
  \end{conj}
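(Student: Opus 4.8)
Our approach has two steps: first reduce Conjecture \ref{factorization conjecture} to a family of multiplicative \emph{relations} among the periods $P^{(I)}(\Pi)$, and then establish those relations by playing the three available $L$-value formulas — Blasius's Theorem \ref{blasius}, the $GL_{n}\times GL_{1}$ formula Theorem \ref{main theorem CM}, and the $GL_{n}\times GL_{n-1}$ Whittaker-period formula Theorem \ref{Whittaker period theorem CM} — against the multiplicativity of CM periods (Proposition \ref{propCM}). For the reduction, I would regard $I\mapsto P^{(I)}(\Pi)$ modulo $E(\Pi)^{\times}$ as a function on $\{0,1,\cdots,n\}^{\Sigma}$ and observe, by a telescoping argument along paths that alter one coordinate at a time, that it factors as $\prod_{\sigma\in\Sigma}P^{(I(\sigma))}(\Pi,\sigma)$ as soon as the ratio $P^{(I)}(\Pi)/P^{(I')}(\Pi)$ depends only on the place $\sigma_{0}$ at which $I$ and $I'$ differ and on the two values $I(\sigma_{0}),I'(\sigma_{0})$. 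Concretely it is enough to prove
\[
P^{(I_{1})}(\Pi)\,P^{(I_{2})}(\Pi)\ \sim_{E(\Pi)}\ P^{(I_{1}')}(\Pi)\,P^{(I_{2}')}(\Pi)
\]
whenever $(I_{1},I_{2})$ and $(I_{1}',I_{2}')$ agree outside two places and have the same unordered pair of values at each of those two places; all $Aut(\C)$-equivariance will be automatic because every formula used is equivariant (here Lemma $1.17$ of \cite{grobnerlin} is convenient for controlling the field).

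To produce such relations for interior signatures — this is the partial result, Theorem \ref{restricted theorem} (with $n\geq 4$) — I would take an auxiliary representation $\Pi^{\#}$ of $GL_{n-1}(\AF)$ that is the normalized isobaric sum of algebraic Hecke characters $\chi_{1},\cdots,\chi_{n-1}$, chosen (using $\psi$ and finite-order twists) so that $(\Pi,\Pi^{\#})$ is in good position and some half-integral point is critical. Then $L(s,\Pi\times\Pi^{\#})=\prod_{j}L(s,\Pi\otimes\chi_{j})$; applying Theorem \ref{Whittaker period theorem CM} to the left-hand side and Theorem \ref{main theorem CM} to each factor on the right — after expressing the Whittaker period $p(\Pi^{\#})$ of an isobaric sum of Hecke characters through Blasius's formula and the behaviour of Whittaker periods under Langlands sums — one obtains $\prod_{j}P^{(I(\Pi,\chi_{j}))}(\Pi)\sim_{E(\Pi)}p(\Pi)\times(\text{explicit CM and }2\pi i\text{ factors depending on the }\chi_{j})$. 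Comparing two such identities for families $(\chi_{j})$ realizing the same multiset of signature-values at each place but distributing them differently, and using Proposition \ref{propCM} to cancel the CM and archimedean factors together with $p(\Pi)$, yields precisely the relations displayed above for $I$ valued in $\{1,\cdots,n-1\}$. The global non-vanishing hypothesis enters exactly to ensure the intermediate $L$-values are nonzero (so $\sim$ is a genuine equality of ratios), and it is implied by $3$-regularity.

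The full conjecture (Theorem \ref{complete theorem}) in addition needs the extreme signatures $0$ and $n$, which the $GL_{n}\times GL_{n-1}$ construction cannot reach because the slopes of the $\chi_{j}$ in good position are confined to the finite gaps between the $a_{i}(\sigma)$. For this I would use the more elaborate construction sketched in the introduction: attach to $\Pi$ a non-cuspidal isobaric representation of $GL_{n+2}(\AF)$ (roughly $\Pi$ together with two auxiliary Hecke characters whose slopes are placed at the extremes) and an auxiliary \emph{cuspidal} representation $\Pi'$ of $GL_{n+3}(\AF)$ induced from Hecke characters, arranged so that $(\Pi',\Pi_{n+2})$ is in good position with a critical point. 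Then $L(s,\Pi'\times\Pi_{n+2})$ factors as a product of Hecke $L$-functions — handled by Blasius — and of $GL_{n}\times GL_{1}$ factors $L(s,\Pi\otimes\chi_{k})$ handled by Theorem \ref{main theorem CM}; equating this with the Whittaker-period expression $p(\Pi')\,p(\Pi_{n+2})\,p(m,\infty)$ and eliminating $p(\Pi')$ and $p(\Pi_{n+2})$ (the latter being $p(\Pi)$ up to CM periods) gives relations among the $P^{(I)}(\Pi)$ that now do involve the values $0$ and $n$. Combined with the interior relations and fed into the reduction, this proves the conjecture. The $2$-regularity of $\Pi$ is what keeps all the auxiliary representations regular and cohomological and the Rankin--Selberg pairs in good position, and the global non-vanishing condition — automatic when $\Pi$ is $6$-regular — is what forces the $GL_{n+3}$ representation to be cuspidal.

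The main obstacle is this last construction. One must choose the infinity types of a whole collection of auxiliary Hecke characters at once so that simultaneously: the $GL_{n+2}$ and $GL_{n+3}$ representations are regular and cohomological; the Rankin--Selberg pair is in good position; the chosen point lies in the critical range allowed by Theorem \ref{main theorem CM}; the $GL_{n+3}$ representation is genuinely cuspidal; and, after substituting Blasius's formula and the relations of Proposition \ref{propCM}, the shared CM-period, $2\pi i$, and $p(\Pi)$ contributions cancel exactly, leaving precisely the relation among the $P^{(I)}(\Pi)$ demanded by the reduction — including the previously inaccessible extreme signatures. Keeping track of the exponents of the CM periods $p(\widecheck{\eta},\sigma)^{I(\sigma)}$ versus $p(\widecheck{\eta},\overline{\sigma})^{n-I(\sigma)}$ through all of this, and controlling at each step the field over which the relation holds so that it is $E(\Pi)$ and not something larger, is the delicate heart of the argument.
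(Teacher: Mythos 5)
Your proposal follows essentially the same route as the paper. The reduction to pairwise relations is Section $3.1$ (Corollary \ref{factorization lemma} and Remark \ref{remark factorization}); the restricted case is Theorem \ref{restricted theorem}, obtained as you describe by comparing two instances of Theorem \ref{main step for factorization}; and the complete case, Theorem \ref{complete theorem}, is proved by exactly the $(GL_{n+3},GL_{n+2})$ comparison you sketch, with $\Pi^{\#}$ the Langlands sum of $\Pi$, $\chi_{1}^{c}$, $\chi_{2}^{c}$, an auxiliary cuspidal $\Pi^{\diamondsuit}$ on $GL_{n+3}(\AF)$, and a second sum $\Pi^{\#\#}$ with the characters permuted at $\sigma_{0}$.

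A few of your bookkeeping remarks are misplaced, and getting them right is precisely where the extreme signatures get linked to the interior ones. The Rankin--Selberg $L$-function factors as $L(\Pi^{\diamondsuit}\times\Pi)\cdot L(\Pi^{\diamondsuit}\times\chi_{1}^{c})\cdot L(\Pi^{\diamondsuit}\times\chi_{2}^{c})$; the first factor is never evaluated --- it is a common factor that cancels when comparing $\Pi^{\#}$ with $\Pi^{\#\#}$ --- and the $GL_{n}\times GL_{1}$ values $L(1,\Pi\otimes\chi_{u})$ do not appear in this factorization at all. They enter instead when one expands the Whittaker period $p(\Pi^{\#})$ of the isobaric sum as $\Omega(\Pi^{\#}_{\infty})\,p(\Pi)\,L(1,\Pi\otimes\chi_{1})\,L(1,\Pi\otimes\chi_{2})\,L(1,\chi_{1}\chi_{2}^{c})$, so $p(\Pi^{\#})$ is not ``$p(\Pi)$ up to CM periods'': it carries the interior periods $P^{(I_{u})}(\Pi)$, which is how they meet the extreme-signature periods of $\Pi^{\diamondsuit}$ coming from $L(\Pi^{\diamondsuit}\times\chi_{u}^{c})$. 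The global non-vanishing hypothesis ensures $L(\tfrac{1}{2}+m,\Pi^{\diamondsuit}\times\Pi^{\#})\neq 0$ so that the $\sim_{E(\Pi)}$ relation is a genuine equality of ratios; it has nothing to do with cuspidality of $\Pi^{\diamondsuit}$. And the paper's primary argument does not take $\Pi^{\diamondsuit}$ to be an automorphic induction: it applies the already-proved Theorem \ref{restricted theorem} to $\Pi^{\diamondsuit}$ itself (whose signatures $I(\Pi^{\diamondsuit},\chi_{u}^{c})$ are interior for $n+3$) to conclude that the left-hand side of (\ref{last compare factorization}) lies in $E(\Pi^{\diamondsuit})^{\times}$, mentioning the automorphic-induction computation via Blasius only as an alternative.
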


\section{Factorization of arithmetic automorphic periods and a conjecture}

\subsection{Basic lemmas}
 Let $X$, $Y$ be two sets and $Z$ be a multiplicative abelian group. We will apply the result of this section to $Z=\C^{\times}/E^{\times}$ where $E$ is a proper number field.
 
\begin{lem}
 Let $f$ be a map from $X\times Y$ to $Z$. The following two statements are equivalent:
 \begin{enumerate}
 \item There exists two maps $g:X\rightarrow Z$ and $h:Y\rightarrow Z$ such that $f(x,y)=g(x)h(y)$ for all $(x,y)\in X\times Y$.
 \item For all $x,x'\in X$ and $y,y'\in Y$, we have $f(x,y)f(x',y')=f(x,y')f(x',y)$.
 \end{enumerate} 
 Moreover, if the above equivalent statements are satisfied, the maps $g$ and $h$ are unique up to scalars.
 
 \end{lem}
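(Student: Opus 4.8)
The plan is to prove the two implications separately and then address uniqueness. The implication $(1)\Rightarrow(2)$ is immediate: if $f(x,y)=g(x)h(y)$, then both sides of the claimed identity equal $g(x)g(x')h(y)h(y')$, using only commutativity of $Z$. The substance is in $(2)\Rightarrow(1)$, and the natural approach is to fix a basepoint and define $g$ and $h$ explicitly in terms of $f$.

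For $(2)\Rightarrow(1)$: first I would dispose of the trivial case $X=\varnothing$ or $Y=\varnothing$ (then there is nothing to check, or one takes the empty maps). Assuming both sets are nonempty, fix once and for all a point $x_{0}\in X$ and a point $y_{0}\in Y$, and set $c:=f(x_{0},y_{0})\in Z$. Define $g:X\to Z$ by $g(x):=f(x,y_{0})$ and $h:Y\to Z$ by $h(y):=c^{-1}f(x_{0},y)$. Then for arbitrary $(x,y)$, the hypothesis applied with $x'=x_{0}$, $y'=y_{0}$ gives $f(x,y)f(x_{0},y_{0})=f(x,y_{0})f(x_{0},y)$, i.e. $f(x,y)\,c=g(x)\cdot\bigl(c\,h(y)\bigr)$, and cancelling $c$ in the abelian group $Z$ yields $f(x,y)=g(x)h(y)$, as desired. (Absorbing $c^{-1}$ into $h$ rather than splitting it between $g$ and $h$ is just a cosmetic choice.)

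For the uniqueness clause: suppose $f(x,y)=g(x)h(y)=g'(x)h'(y)$ for all $(x,y)$, with $X,Y$ nonempty. Fix $x_{0}\in X$; evaluating at $x=x_{0}$ gives $g(x_{0})h(y)=g'(x_{0})h'(y)$ for all $y$, so $h'(y)=\bigl(g(x_{0})g'(x_{0})^{-1}\bigr)h(y)$, i.e. $h'=t^{-1}h$ with $t:=g'(x_{0})g(x_{0})^{-1}\in Z$ a fixed scalar; symmetrically, fixing $y_{0}\in Y$ gives $g'=t'g$ for a scalar $t'$, and comparing $g'(x)h'(y)=t'g(x)\,t^{-1}h(y)$ with $g(x)h(y)$ (at any single pair, since $g(x)h(y)$ can be taken nonzero in $Z$, or rather since we work in a group) forces $t'=t$. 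Hence $(g,h)$ is unique up to replacing by $(tg,t^{-1}h)$, which is exactly ``unique up to scalars.''

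I do not expect a serious obstacle here; the only points requiring care are the degenerate cases where $X$ or $Y$ is empty (handled separately) and making sure every manipulation uses only the group structure of $Z$ — in particular that cancellation is legitimate — which it is since $Z$ is a group. When this lemma is applied with $Z=\C^{\times}/E^{\times}$, the maps $g,h$ produced are exactly the factor periods $P^{(s)}(\Pi,\sigma)$ sought in Conjecture \ref{factorization conjecture}, and the uniqueness-up-to-scalars statement matches the remark in the introduction that the local periods are not canonically determined without a further normalization.
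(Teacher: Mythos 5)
Your proof is correct and follows essentially the same route as the paper: fix a basepoint $(x_{0},y_{0})$, set $g(x)=f(x,y_{0})$ and $h(y)=f(x_{0},y_{0})^{-1}f(x_{0},y)$, and apply statement $2$ with $x'=x_{0}$, $y'=y_{0}$. You additionally spell out the trivial direction, the empty-set degenerate case, and the uniqueness-up-to-scalars clause, none of which the paper bothers to write out, but these are elaborations rather than a different argument.
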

 
 \begin{dem}
 The direction that $1$ implies $2$ is trivial. Let us prove the inverse. 
 We fix any $y_{0}\in Y$ and define $g(x):=f(x,y_{0})$ for all $x\in X$. We then fix any $x_{0}\in X$ and define $h(y):=\cfrac{f(x_{0},y)}{g(x_{0})}=\cfrac{f(x_{0},y)}{f(x_{0},y_{0})}$. 
 
For any $x\in X$ and $y\in Y$, Statement $2$ tells us that $f(x,y)f(x_{0},y_{0})=f(x,y_{0})f(x_{0},y)$. Therefore $f(x,y)=f(x,y_{0})\times\cfrac{f(x_{0},y)}{f(x_{0},y_{0})}=g(x)h(y)$ as expected.

  \end{dem}

 Let $n$ be a positive integer and $X_{1},\cdots,X_{n}$ be some sets. Let $f$ be a map from $X_{1}\times X_{2}\times \cdots\times X_{n}$ to $Z$.
 
 The following corollary can be deduced from the above Lemma by induction on $n$.
 
  \begin{cor}\label{factorization lemma}
 
The following two statements are equivalent:
 \begin{enumerate}
 \item There exists some maps $f_{k}:X_{k}\rightarrow Z$ for $1\leq k\leq n$ such that $f(x_{1},x_{2},\cdots,x_{n})=\prod\limits_{1\leq k\leq n} f_{k}(x_{k})$ for all $x_{k}\in X_{k}$, $1\leq k\leq n$.
 \item Given any $x_{j},x'_{j}\in X_{j}$ for each $1\leq j\leq n$, we have 
\begin{eqnarray}
&f(x_{1},x_{2},\cdots,x_{n})\times f(x'_{1},x'_{2},\cdots,x'_{n})&\nonumber\\
&=f(x_{1},\cdots,x_{k-1},x'_{k},x_{k+1},x_{n})\times f(x'_{1},\cdots x'_{k-1},x_{k},x'_{k+1},\cdots,x'_{n})&\nonumber
\end{eqnarray} for any $1\leq k\leq n$.
 \end{enumerate} 
 Moreover, if the above equivalent statements are satisfied then for any $\lambda_{1},\cdots,\lambda_{n}\in Z$ such that $\lambda_{1}\cdots\lambda_{n}=1$, we have another factorization $f(x_{1},\cdots,x_{n})=\prod\limits_{1\leq i\leq n}(\lambda_{i}f_{i})(x_{i})$. Each factorization of $f$ is of the above form.

We fix $a_{i}\in X_{i}$ for each i and $c_{1},\cdots,c_{n}\in Z$ such that $f(a_{1},\cdots,a_{n})=c_{1}\cdots c_{n}$. If the above equivalent statements are satisfied then there exists a unique factorization such that $f_{i}(a_{i})=c_{i}$.

  \end{cor}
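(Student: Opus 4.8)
The plan is to treat $(1)\Rightarrow(2)$ as a triviality and to prove $(2)\Rightarrow(1)$ by induction on $n$. Indeed, if $f(x_{1},\dots,x_{n})=\prod_{k}f_{k}(x_{k})$ then, $Z$ being abelian, both sides of the displayed identity in $(2)$ equal $\prod_{k}f_{k}(x_{k})\cdot\prod_{k}f_{k}(x_{k}')$, so $(1)\Rightarrow(2)$. The cases $n\leq 2$ of $(2)\Rightarrow(1)$ are trivial ($n=1$) or are exactly the preceding lemma ($n=2$).

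For the inductive step I would assume the statement for $n-1$ variables and, given $f\colon X_{1}\times\cdots\times X_{n}\to Z$ satisfying $(2)$, view $f$ as a map on the product of the two sets $X:=X_{1}\times\cdots\times X_{n-1}$ and $X_{n}$. Condition $(2)$ taken with $k=n$ is precisely condition $(2)$ of the preceding (two-variable) lemma for this map; that lemma then produces $g\colon X\to Z$ and $h\colon X_{n}\to Z$ with $f(x_{1},\dots,x_{n})=g(x_{1},\dots,x_{n-1})\,h(x_{n})$, and, inspecting its proof, one may take $g$ to be an honest slice, $g(x_{1},\dots,x_{n-1})=f(x_{1},\dots,x_{n-1},y_{0})$ for a fixed $y_{0}\in X_{n}$.

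Next I would check that $g$ satisfies condition $(2)$ of the $(n-1)$-variable statement, so that the inductive hypothesis applies. Substituting $g=f(-,\dots,-,y_{0})$ turns condition $(2)$ for $g$ (for $1\leq k\leq n-1$) into condition $(2)$ for $f$ evaluated on the slice $x_{n}=x_{n}'=y_{0}$ with the same $k$, which holds because $(2)$ for $f$ holds for every choice of the last coordinate. The inductive hypothesis then gives $f_{1},\dots,f_{n-1}$ with $g=\prod_{k=1}^{n-1}f_{k}$; setting $f_{n}:=h$ yields $f=\prod_{k=1}^{n}f_{k}$, completing the induction. The transfer of condition $(2)$ to $g$ is the only step needing care, and it is precisely why I would lean on the explicit construction of $g$ from the proof of the two-variable lemma rather than on its statement alone; everything else is routine.

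For the final two assertions, suppose $f=\prod_{k}f_{k}=\prod_{k}f_{k}'$ are two factorizations; then $\phi_{k}:=f_{k}/f_{k}'$ satisfies $\prod_{k}\phi_{k}(x_{k})=1$ for all tuples, and freezing every coordinate but one at chosen base points forces each $\phi_{k}$ to be a constant $\lambda_{k}\in Z$ with $\prod_{k}\lambda_{k}=1$, while conversely any such rescaling of a factorization is again one. This is the asserted description of all factorizations. For the normalization, given $a_{i}\in X_{i}$ and $c_{i}\in Z$ with $f(a_{1},\dots,a_{n})=\prod_{i}c_{i}$, I would start from any factorization $(f_{k})$ and rescale by $\lambda_{k}:=c_{k}f_{k}(a_{k})^{-1}$ (whose product is $1$) to get $f_{k}(a_{k})=c_{k}$; if two such normalized factorizations differed by scalars $(\lambda_{k})$ with $\prod_{k}\lambda_{k}=1$, evaluating at the $a_{k}$ gives $\lambda_{k}c_{k}=c_{k}$, hence $\lambda_{k}=1$ for all $k$, which is the uniqueness.
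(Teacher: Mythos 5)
Your proof is correct and follows exactly the route the paper indicates, namely induction on $n$ via the two-variable Lemma preceding this Corollary (the paper itself only says the Corollary ``can be deduced from the above Lemma by induction on $n$'' and leaves the details to the reader). Your treatment of the transfer of condition $(2)$ to the slice $g=f(\,\cdot\,,y_0)$, and your verification of the uniqueness-up-to-scalars and normalization claims by dividing two factorizations and freezing coordinates, are exactly the routine details the paper omits.
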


\begin{rem}  \label{remark factorization}
If $\#X_{k}\geq 3$ for all $k$, it is enough to verify the condition in statement $2$ of the above corollary in the case $x_{j}\neq x'_{j}$ for all $1\leq j\leq n$.

In fact, when $\#X_{k}\geq 3$ for all $k$, for any $1\leq j\leq n$ and any $y_{j},y'_{j}\in X_{j}$, we may take $x_{j} \in X_{j}$ such that $x_{j}\neq y_{j}$, $x_{j}\neq y_{j}'$.

We fix any $1\leq k\leq n$. If statement $2$ is verified when $x_{j}\neq x'_{j}$ for all $j$ then for any $y_{k}\neq y'_{k}$, we have
\begin{eqnarray}
&&f(y_{1},y_{2},\cdots,y_{n})f(y'_{1},y'_{2},\cdots,y'_{n}) f(x_{1},x_{2},\cdots,x_{n}) \nonumber\\
&=&f(y_{1},y_{2},\cdots,y_{n})f(y'_{1},\cdots y'_{k-1},x_{k},y'_{k+1},\cdots,y'_{n})\times\nonumber\\
&& \hspace{20pt} f(x_{1},\cdots,x_{k-1},y'_{k},x_{k+1},\cdots x_{n}) \nonumber\\
 &=&f(y_{1},y_{2},\cdots,y_{n})f(x_{1},\cdots,x_{k-1},y'_{k},x_{k+1},\cdots x_{n})\times\nonumber\\
&&  \hspace{20pt} f(y'_{1},\cdots y'_{k-1},x_{k},y'_{k+1},\cdots,y'_{n})
 \nonumber\\
 &=& f(y_{1},\cdots,y_{k-1},y'_{k},y_{k+1},\cdots,y_{n})f(x_{1},\cdots,x_{k-1},y_{k},x_{k+1},\cdots,x_{n})\times \nonumber\\
 && \hspace{20pt} f(y'_{1},\cdots y'_{k-1},x_{k},y'_{k+1},\cdots,y'_{n})\nonumber\\
 &=& f(y_{1},\cdots,y_{k-1},y'_{k},y_{k+1},\cdots,y_{n})f(y'_{1},\cdots y'_{k-1},y_{k},y'_{k+1},\cdots,y'_{n})f(x_{1},x_{2},\cdots,x_{n}).\nonumber
\end{eqnarray}
We have assumed $y_{k}\neq y'_{k}$ to guarantee that each time we apply the formula in Statement $2$, the coefficients satisfy $x_{j}\neq x'_{j}$ for all $1\leq j\leq n$.

Therefore \begin{eqnarray}
&f(y_{1},y_{2},\cdots,y_{n})\times f(y'_{1},y'_{2},\cdots,y'_{n})& \nonumber\\
&=f(y_{1},\cdots,y_{k-1},y'_{k},y_{k+1},\cdots,y_{n})\times f(y'_{1},\cdots y'_{k-1},y_{k},y'_{k+1},\cdots,y'_{n})&\nonumber
\end{eqnarray} if $y_{k}\neq y'_{k}$. If $y_{k}=y'_{k}$, this formula is trivially true. \\

We conclude that we can weaken the condition in Statement $2$ of the above Corollary to $x_{j}\neq x'_{j}$ for all $1\leq j\leq n$ when $\#X_{k}\geq 3$ for all $k$. We will verify this weaker condition in the application to the factorization of arithmetic automorphic periods.
\end{rem}
  
\subsection{Relation of the Whittaker period and arithmetic periods}
 Let $\Pi$ be a regular cuspidal representation of $GL_{n}(\AF)$ as in Theorem \ref{main theorem CM} with infinity type $(z^{a_{i}(\sigma)}\overline{z}^{-a_{i}(\sigma)})_{1\leq i\leq n}$ at $\sigma\in \Sigma$. We may assume that $a_{1}(\sigma)>a_{2}(\sigma)>\cdots>a_{n}(\sigma)$ for all $\sigma\in\Sigma$.
 
Recall that we say $\Pi$ is \textbf{$N$-regular} if $a_{i}(\sigma)-a_{i+1}(\sigma)\geq N$ for all $1\leq i\leq n-1$ and $\sigma\in\Sigma$.

\begin{thm}\label{relation Whittaker}\label{main step for factorization}
For $1\leq i\leq n-1$, let $I_{u}$ be a map from $\Sigma$ to $\{1,\cdots,n-1\}$.
There exists a non-zero complex number $Z(\Pi_\infty)$ depending only on the infinity type of $\Pi$, such that if for any $\sigma\in\Sigma$, each number inside $\{1,2,\cdots,n-1\}$ appears exactly once in $\{I_{u}(\sigma)\}_{1\leq i\leq n-1}$, then we have:
 \begin{equation}\label{Whittaker period whole formula}
 p(\Pi)\sim_{E(\Pi)E(\Pi^{\#})} Z(\Pi_{\infty})\prod\limits_{1\leq u\leq n-1} P^{(I_{u})}(\Pi)
  \end{equation}
  provided $\Pi$ is 3-regular or certain central $L$-values are non-zero. 
\end{thm}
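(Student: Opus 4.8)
The plan is to obtain two different expressions for one and the same Rankin--Selberg special value $L(\tfrac{1}{2}+m,\Pi\times\Pi^{\#})$ --- one from the $GL_{n}\times GL_{n-1}$ relation of Theorem \ref{Whittaker period theorem CM} and one from the $GL_{n}\times GL_{1}$ relation of Theorem \ref{main theorem CM} --- and to read off the desired identity by comparing them. The point is that Theorem \ref{Whittaker period theorem CM} allows the $GL_{n-1}$-factor to be non-cuspidal, so we are free to take $\Pi^{\#}$ to be an isobaric sum of Hecke characters adapted to the prescribed family $(I_{u})_{u}$.

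\emph{Construction of the auxiliary representation.} For each $\sigma\in\Sigma$ the assignment $u\mapsto I_{u}(\sigma)$ is, by hypothesis, a bijection of $\{1,\dots,n-1\}$ onto itself. Write the infinity type of $\Pi$ at $\sigma$ as $(z^{a_{i}(\sigma)}\bar z^{-a_{i}(\sigma)})_{i}$ with $a_{1}(\sigma)>\cdots>a_{n}(\sigma)$. I would choose algebraic Hecke characters $\eta_{1},\dots,\eta_{n-1}$ of $F$ with infinity type $z^{p_{u}(\sigma)}\bar z^{q_{u}(\sigma)}$ at $\sigma$, with $p_{u}(\sigma)+q_{u}(\sigma)$ a constant independent of $\sigma$ and with $p_{u}(\sigma)-q_{u}(\sigma)$ placed in the open gap between two consecutive values $-2a_{i}(\sigma)$ determined by $I_{u}(\sigma)$, so that in the notation of Theorem \ref{main theorem CM} one gets $I(\Pi,\eta_{u})=I_{u}$. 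The $3$-regularity of $\Pi$ leaves enough room inside each such gap to do this while simultaneously arranging that (i) the normalized isobaric sum $\Pi^{\#}$ of $\eta_{1},\dots,\eta_{n-1}$ is a regular cohomological automorphic representation of $GL_{n-1}(\AF)$ in good position with $\Pi$, and (ii) a fixed half-integer $m+\tfrac{1}{2}$ is critical for $\Pi\times\Pi^{\#}$ and, after the standard normalizing twists, lies in the range $\geq (1+\omega(\eta_{u}))/2$ where Theorem \ref{main theorem CM} applies to each $\Pi\otimes\eta_{u}$. Moreover $3$-regularity makes the critical strip of $L(s,\Pi\times\Pi^{\#})$ wide enough that $m+\tfrac{1}{2}$ may be taken at a point where this value is known not to vanish (Jacquet--Shalika, Shahidi); if $\Pi$ is only $2$-regular this non-vanishing is exactly the hypothesis ``certain central $L$-values are non-zero.''

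\emph{Comparison.} Theorem \ref{Whittaker period theorem CM} applied to $(\Pi,\Pi^{\#})$ gives $L(\tfrac{1}{2}+m,\Pi\times\Pi^{\#})\sim_{E(\Pi)E(\Pi^{\#})} p(m,\Pi_{\infty},\Pi^{\#}_{\infty})\,p(\Pi)\,p(\Pi^{\#})$. On the other hand $L(s,\Pi\times\Pi^{\#})=\prod_{u=1}^{n-1}L(s,\Pi\otimes\eta_{u})$ up to the normalizing twists, and Theorem \ref{main theorem CM} evaluates each factor as $(2\pi i)^{\ast}P^{(I_{u})}(\Pi)\prod_{\sigma\in\Sigma}p(\widecheck{\eta_{u}},\sigma)^{I_{u}(\sigma)}p(\widecheck{\eta_{u}},\bar\sigma)^{n-I_{u}(\sigma)}$, the exponent $\ast$ depending only on $m$ and the infinity types. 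Equating, dividing by the nonzero $L$-value, and replacing $p(\Pi^{\#})$ --- the Whittaker period of an isobaric sum of Hecke characters --- by its expression in terms of the CM periods of the $\eta_{u}$ (obtained from the analogous lower-rank computation, including the $L$-values at $1$ of the ratios $\eta_{u}/\eta_{v}$ which are themselves CM periods by Blasius, or inductively from the same machinery) reduces the identity to a relation purely among CM periods of the $\eta_{u}$, powers of $2\pi i$, and the archimedean constant $p(m,\Pi_{\infty},\Pi^{\#}_{\infty})$. Here one uses the CM-period calculus of Proposition \ref{propCM}, Blasius' Theorem \ref{blasius} and the normalization $p(||\cdot||_{\AK},1)\sim_{\Q}(2\pi i)^{-1}$: the hypothesis that $\{I_{u}(\sigma)\}_{u}$ exhausts $\{1,\dots,n-1\}$ at every $\sigma$ is precisely what forces the CM periods of the auxiliary characters to cancel, leaving only a factor determined by the infinity types; and since $p(\Pi)$ and $\prod_{u}P^{(I_{u})}(\Pi)$ involve no auxiliary data, this surviving factor must be independent of all the choices, giving the constant $Z(\Pi_{\infty})$.

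I expect the main obstacle to be this last bookkeeping: pinning down the Whittaker period $p(\Pi^{\#})$ of the Eisenstein representation in closed form, and then checking that the tower of CM periods $p(\widecheck{\eta_{u}},\sigma)$, weighted by the signatures $I_{u}(\sigma)$, telescopes --- via additivity over CM types together with the conjugation and multiplicativity relations of Proposition \ref{propCM} --- into something free of the $\eta_{u}$. A secondary difficulty is the simultaneous arrangement in the construction step (prescribed signatures, good position, and a non-vanishing critical point all at once), which is what consumes the regularity hypothesis and accounts for the weaker ``$2$-regular plus a non-vanishing assumption'' alternative. The $Aut(\C)$-equivariance of the final relation is routine, since each input theorem is stated equivariantly and the manipulations above only use equivariant relations.
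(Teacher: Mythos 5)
Your proposal is essentially the paper's own proof: you introduce an isobaric $\Pi^{\#}$ of Hecke characters adapted to the prescribed signatures $(I_u)$, evaluate $L(\tfrac12+m,\Pi\times\Pi^{\#})$ two ways (via the $GL_n\times GL_{n-1}$ Whittaker-period formula on one side and the product of $GL_n\times GL_1$ formulas on the other), substitute the CM-period expression for $p(\Pi^{\#})$ (via $\Omega(\Pi^\#_\infty)\prod_{u<v}L(1,\chi_u\chi_v^{-1})$ and Blasius), and observe that the exhaustion hypothesis makes the CM periods cancel, leaving a constant depending only on $\Pi_\infty$. The paper simply normalizes the auxiliary characters to be conjugate self-dual (splitting into the cases $n$ even and $n$ odd, twisting by $\psi$ in the latter), which makes the exponent cancellation fully explicit via the identity $\#\{v:k_v(\sigma)>k_u(\sigma)\}-\#\{v:k_v(\sigma)<k_u(\sigma)\}=2I_u(\sigma)-n$, whereas you carry more general infinity types; this is a cosmetic rather than a structural difference, and your treatment of the $3$-regularity/non-vanishing dichotomy matches the paper's.
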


\begin{proof}

Let us assume at first that $n$ is even.

For each $\sigma$ and $u$, let $k_{u}(\sigma)$ be an integer such that $I_{u}(\sigma)=\#\{i\mid-a_{i}(\sigma)> k_{u}(\sigma)\}$. 
 
Since $n$ is even, $a_{i}(\sigma)\in \Z+\cfrac{1}{2}$ for all $1\leq i\leq n$ and all $\sigma\in \Sigma$. The condition on $I_{u}$ implies that for all $\sigma\in\Sigma$, the numbers $\{k_{u}(\sigma)\mid 1\leq u\leq n-1\}$ lie in the $n-1$ gaps between $-a_{n}(\sigma)>-a_{n-1}(\sigma)>\cdots>-a_{1}(\sigma)$.

For $1\leq u \leq n-1$, let $\chi_{u}$ be an algebraic conjugate self-dual Hecke character of $F$ with infinity type $z^{k_{u}(\sigma)}\overline{z}^{-k_{u}(\sigma)}$ at $\sigma\in \Sigma$.

  We define $\Pi^{\#}$ to be the Langlands sum of $\chi_{u}$, $1\leq u\leq n-1$. It is an algebraic regular automorphic representation of $GL_{n-1}(\AF)$. Then the pair $(\Pi,\Pi^{\#})$ is in good position. By Proposition \ref{Whittaker period theorem CM} we have
 \begin{equation}\label{CM Whittaker period}
L(\cfrac{1}{2}+m,\Pi\times \Pi^{\#})\sim_{E(\Pi)E(\Pi^{\#})} p(\Pi)p(\Pi^{\#})p(m,\Pi_{\infty},\Pi^{\#}_{\infty})
\end{equation}
where $p(m,\Pi_{\infty},\Pi^{\#}_{\infty})$ is a complex number which depends on $m,\Pi_{\infty}$ and $\Pi^{\#}_{\infty}$.

 Since $\Pi^{\#}$ is the Langlands sum of $\chi_{u}$, $1\leq u\leq n-1$, we have \begin{equation}\nonumber L(\cfrac{1}{2}+m,\Pi\times \Pi^{\#}) =\prod\limits_{1\leq u\leq n-1}L(\cfrac{1}{2}+m,\Pi\times \chi_{u}).
 \end{equation}
  We then apply Theorem \ref{n*1} to the right hand side and get:
 \begin{eqnarray}
 &&L(\cfrac{1}{2}+m,\Pi\times \Pi^{\#}) =\prod\limits_{1\leq u\leq n-1}L(\cfrac{1}{2}+m,\Pi\times \chi_{u})\nonumber\\
 &\sim_{E(\Pi)E(\Pi^{\#})}&\prod\limits_{1\leq u\leq n-1}[(2\pi i)^{d(m+\frac{1}{2})n}P^{(I(\Pi,\chi_{u}))}(\Pi)\prod\limits_{\sigma\in\Sigma}p(\widecheck{\chi_{u}},\sigma)^{I_{u}(\sigma)} p(\widecheck{\chi_{u}},\overline{\sigma})^{n-I_{u}(\sigma)} ]\nonumber
 \end{eqnarray}
Recall that $I(\Pi,\chi_{u})(\sigma)=\#\{i\mid-a_{i}(\sigma)> k_{u}(\sigma)\}=I_u(\sigma)$ for any $\sigma\in\Sigma$ and $1\leq u\leq n-1$.
 
 Note that $\chi_{u}$ is conjugate self-dual, we have $p(\widecheck{\chi_{u}},\overline{\sigma})\sim_{E(\Pi^{\#})}p(\widecheck{\chi_{u}^{c}},\sigma)\sim_{E(\Pi^{\#})}p(\widecheck{\chi_{u}^{-1}},\sigma)\sim_{E(\Pi^{\#})}p(\widecheck{\chi_{u}},\sigma)^{-1}$. We deduce that:
 
 \begin{equation}\label{the left hand side step 1}
L(\cfrac{1}{2}+m,\Pi\times \Pi^{\#}) \sim_{E(\Pi)E(\Pi^{\#})} (2\pi i)^{d(m+\frac{1}{2})n(n-1)}\prod\limits_{1\leq u\leq n-1} [P^{(I_{u})}(\Pi)\prod\limits_{\sigma\in\Sigma}p(\widecheck{\chi_{u}},\sigma)^{2I_{u}(\sigma)-n}  ]\end{equation}

 By Thoerem {Whittaker period theorem CM}, there exists a constant $\Omega(\Pi^{\#}_{\infty})\in \C^{\times}$ well defined up to $E(\Pi^{\#})^{\times}$ such that 
 \begin{equation}
 p(\Pi^{\#})\sim_{E(\Pi^{\#})} \Omega(\Pi^{\#}_{\infty})\prod\limits_{1\leq u<v\leq n-1}L(1,\chi_{u}\chi_{v}^{-1}).
 \end{equation}
 
 By Blasius's result, we have:
 \begin{equation}\nonumber
 L(1,\chi_{u}\chi_{v}^{-1}) \sim_{E(\Pi^{\#})} (2\pi i)^{d}\prod\limits_{\sigma\in\Sigma} p(\widecheck{\chi_{u}\chi_{v}^{-1}},\sigma')
 \end{equation}
 
 where the embedding $\sigma'$ is defined as follows:
 if $k_{u}(\sigma)<k_{v}(\sigma)$ then $\sigma'=\sigma$ and $p(\widecheck{\chi_{u}\chi_{v}^{-1}},\sigma') \sim_{E(\chi_{u})} p(\widecheck{\chi_{u}},\sigma) p(\widecheck{\chi_{v}},\sigma)^{-1}$; otherwise $\sigma'=\overline{\sigma}$ and
$
p(\widecheck{\chi_{u}\chi_{v}^{-1}},\sigma')\sim_{E(\chi_{u})}  p(\widecheck{\chi_{u}},\sigma)^{-1}p(\widecheck{\chi_{v}},\sigma)
$.

 Therefore, the Whittaker period $p(\Pi^{\#})$
 \begin{equation}\label{the right hand side step 1}
  \sim_{E(\Pi^{\#})}(2\pi i)^{\frac{d(n-1)(n-2)}{2}}\Omega(\Pi^{\#}_{\infty}) \prod\limits_{1\leq u\leq n-1}\prod\limits_{\sigma\in \Sigma} p(\widecheck{\chi_{u}},\sigma)^{\#\{v\mid k_{v}(\sigma)>k_{u}(\sigma)\}-\#\{v\mid k_{v}(\sigma)<k_{u}(\sigma)\}}
 \end{equation}
 
 We know $\#\{v\mid k_{v}(\sigma)<k_{u}(\sigma)\}=n-2-\#\{v\mid k_{v}(\sigma)>k_{u}(\sigma)\}$.
 
 Moreover, by definition of $k_{u}(\sigma)$ we have $\#\{v\mid k_{v}(\sigma)>k_{u}(\sigma)\}=\#\{i\mid -a_{i}(\sigma)>k_{u}(\sigma)\}-1=I_{u}(\sigma)-1$. 
 Therefore, 
 \begin{equation}\label{the right hand side step 2}
 \#\{v\mid k_{v}(\sigma)>k_{u}(\sigma)\}-\#\{v\mid k_{v}(\sigma)<k_{u}(\sigma)\}=2I_{u}(\sigma)-n
 \end{equation}

 We compare equations (\ref{CM Whittaker period}), (\ref{the left hand side step 1}), (\ref{the right hand side step 1}) and (\ref{the right hand side step 2}). If $\Pi$ is $3$-regular we may take $m=1$ and then $L(\cfrac{1}{2}+m,\Pi\times \Pi^{\#})$ is automatically non-zero, otherwise we take $m=0$ and we assume that $L(\cfrac{1}{2},\Pi\times \Pi^{\#})\neq 0$. We obtain that:
 \begin{eqnarray}
&& (2\pi i)^{d(m+\frac{1}{2})n(n-1)}\prod\limits_{1\leq u\leq n-1} P^{(I_{u})}(\Pi)\nonumber\\
 &\sim_{E(\Pi)E(\Pi^{\#})}  &(2\pi i)^{\frac{d(n-1)(n-2)}{2}}p(\Pi)\Omega(\Pi^{\#}_{\infty}) p(m,\Pi_{\infty},\Pi^{\#}_{\infty}).\nonumber
 \end{eqnarray}

 Hence we have $p(\Pi)\sim_{E(\Pi)E(\Pi^{\#})}$
 \begin{equation}\nonumber
 (2\pi i)^{d(m+\frac{1}{2})n(n-1)-\frac{d(n-1)(n-2)}{2}}\Omega(\Pi^{\#}_{\infty})^{-1} p(m,\Pi_{\infty},\Pi^{\#}_{\infty})^{-1}\prod\limits_{1\leq u\leq n-1} P^{(I_{u})}(\Pi).
 \end{equation}
 
 If we take $$Z(m,\Pi_{\infty},\Pi'_{\infty}):=(2\pi i)^{d(m+\frac{1}{2})n(n-1)-\frac{d(n-1)(n-2)}{2}}\Omega(\Pi^{\#}_{\infty})^{-1} p(m,\Pi_{\infty},\Pi^{\#}_{\infty})^{-1}$$ then $p(\Pi)\sim_{E(\Pi)E(\Pi^{\#})} Z(m,\Pi_{\infty},\Pi'_{\infty})\prod\limits_{1\leq u\leq n-1} P^{(I_{u})}(\Pi).$ 
 
 In particular, we have that $Z(m,\Pi_{\infty},\Pi'_{\infty})$ depends only on $\Pi_{\infty}$. \\
 
 We may define:
 \begin{equation}\label{definition of Z}
 Z(\Pi_{\infty}):=Z(m,\Pi_{\infty},\Pi'_{\infty})=(2\pi i)^{d(m+\frac{1}{2})n(n-1)-\frac{d(n-1)(n-2)}{2}}\Omega(\Pi^{\#}_{\infty})^{-1} p(m,\Pi_{\infty},\Pi^{\#}_{\infty})^{-1}.
 \end{equation}
  It is a non-zero complex number well defined up to elements in $E(\Pi)^{\times}$.

 We deduce that:
 \begin{equation}
  p(\Pi)\sim_{E(\Pi)E(\Pi^{\#})} Z(\Pi_{\infty})\prod\limits_{1\leq u\leq n-1} P^{(I_{u})}(\Pi).
  \end{equation}

Now assume that $n$ is odd. We keep the notation in the above section We have $a_{i}(\sigma)\in \Z$ for all $1\leq i\leq n$ and all $\sigma\in \Sigma$. In this case, we take integers $k_{u}(\sigma)$ such that $I_{u}(\sigma)=\#\{i\mid-a_{i}(\sigma)> k_{u}(\sigma)+\cfrac{1}{2}\}$.

We still let $\chi_{u}$ be an algebraic conjugate self-dual Hecke character of $F$ with infinity type $z^{k_{u}(\sigma)}\overline{z}^{-k_{u}(\sigma)}$ at $\sigma\in \Sigma$.

Recall that $\psi$ is an algebraic Hecke character of $F$ with infinity type $z^{1}$ at each $\sigma\in \Sigma$ such that $\psi\psi^{c}=||\cdot||_{\AF}$. We take $\Pi^{\#}$ to be the Langlands sum of $\chi_{u}\psi||\cdot||_{\AF}^{-\frac{1}{2}}$, $1\leq u\leq n-1$. It is an algebraic regular automorphic representation of $GL_{n-1}(\AF)$. The conditions of Theorem \ref{Whittaker period theorem CM} hold. \\
 
 We repeat the above process for $\Pi$ and $\Pi^{\#}$ and get
  \begin{eqnarray}\nonumber
&L(\cfrac{1}{2}+m,\Pi\times \Pi^{\#})&\nonumber\\\nonumber
& \sim_{E(\Pi)E(\Pi^{\#})} (2\pi i)^{dmn(n-1)}\prod\limits_{1\leq u\leq n-1} [P^{(I(\Pi,\chi_{u}\psi))}\prod\limits_{\sigma\in\Sigma}p(\widecheck{\chi_{u}},\sigma)^{2I_{u}(\sigma)-n}  ]\times&\\\nonumber
&\prod\limits_{\sigma\in\Sigma}(p(\widecheck{\psi},\sigma)^{\sum_{1\leq u\leq n-1}I_{u}(\sigma)}p(\widecheck{\psi}^{c},\sigma)^{\sum_{1\leq u\leq n-1}(n-I_{u}(\sigma))})&
\end{eqnarray}
 where $I_{u}:=I(\Pi,\chi_{u}\psi)$ with $I_{u}(\sigma)=\#\{i\mid-a_{i}(\sigma)> k_{u}(\sigma)+\cfrac{1}{2}\}$.

 We see $\prod\limits_{1\leq u\leq n-1}I_{u}(\sigma)=\cfrac{n(n-1)}{2}$ and $\sum_{1\leq u\leq n-1}(n-I_{u}(\sigma))=\cfrac{n(n-1)}{2}$. \\
 
 We then have 
 \begin{eqnarray}
&& \prod\limits_{\sigma\in\Sigma}(p(\widecheck{\psi},\sigma)^{\sum_{1\leq u\leq n-1}I_{u}(\sigma)}p(\widecheck{\psi}^{c},\sigma)^{\sum_{1\leq u\leq n-1}(n-I_{u}(\sigma))}
)\nonumber\\
&\sim_{E(\psi)}& \prod\limits_{\sigma\in\Sigma}p(\widecheck{\psi\psi^{c}},\sigma)^{\frac{n(n-1)}{2}} \sim_{E(\psi)}  \prod\limits_{\sigma\in\Sigma}p(||\cdot||_{\AF}^{-1},\sigma)^{\frac{n(n-1)}{2}} \sim_{E(\psi)} (2\pi i)^{\frac{dn(n-1)}{2}}\nonumber.
 \end{eqnarray}
 
 We verify that the equation (\ref{the right hand side step 1}) and (\ref{the right hand side step 2}) remain unchanged. We can see that equation (\ref{Whittaker period whole formula}) still holds here.

\end{proof}

\subsection{Factorization of arithmetic automorphic periods: restricted case}

  We consider the function $\prod\limits_{\sigma\in\Sigma} \{0,1,\cdots,n\}\rightarrow \C^{\times}/E(\Pi)^{\times}$ which sends  $(I(\sigma))_{\sigma\in\Sigma}$ to $P^{(I)}(\Pi)$.

  In this section, we will prove the above conjecture restricted to $\{1,2,\cdots,{n-1}\}^{\Sigma}$. More precisely, we will prove that
  
 \begin{thm}\label{restricted theorem}
If $n\geq 4$ and $\Pi$ satisfies a global non vanishing condition, in particular, if $\Pi$ is $3$-regular, then there exists some non zero complex numbers $P^{(s)}(\Pi,\sigma)$ for all $1\leq s\leq n-1$, $\sigma\in\Sigma$ such that $P^{(I)}(\Pi) \sim_{E(\Pi)} \prod\limits_{\sigma\in\Sigma}P^{(I(\sigma))}(\Pi,\sigma)$ for all $I=(I(\sigma))_{\sigma\in\Sigma}\in \{1,2,\cdots,n-1\}^{\Sigma}$.

 \end{thm}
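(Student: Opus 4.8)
The plan is to combine Theorem~\ref{main step for factorization} with the abstract factorization criterion of Corollary~\ref{factorization lemma}, applied to the index set $\Sigma$, the sets $X_\sigma=\{1,\dots,n-1\}$, and the map $f\colon \prod_{\sigma\in\Sigma}\{1,\dots,n-1\}\to \C^\times/E(\Pi)^\times$ sending $I$ to $P^{(I)}(\Pi)$. Since $n\ge 4$ we have $\#\{1,\dots,n-1\}\ge 3$, so by Remark~\ref{remark factorization} it is enough to check statement (2) of Corollary~\ref{factorization lemma} in the generic case only: given $I,I'\in\{1,\dots,n-1\}^\Sigma$ with $I(\sigma)\ne I'(\sigma)$ for every $\sigma\in\Sigma$ and a fixed $\sigma_0\in\Sigma$, one must show
\[
P^{(I)}(\Pi)\,P^{(I')}(\Pi)\;\sim_{E(\Pi)}\;P^{(J)}(\Pi)\,P^{(J')}(\Pi),
\]
where $J$ (resp. $J'$) agrees with $I$ (resp. $I'$) at every $\sigma\ne\sigma_0$ and satisfies $J(\sigma_0)=I'(\sigma_0)$ (resp. $J'(\sigma_0)=I(\sigma_0)$). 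Once this is established, Corollary~\ref{factorization lemma} produces the numbers $P^{(s)}(\Pi,\sigma)$ with $P^{(I)}(\Pi)\sim_{E(\Pi)}\prod_{\sigma}P^{(I(\sigma))}(\Pi,\sigma)$, unique up to scalars.

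To prove the displayed relation I would realize $I,I'$ as the first two members of a family of $n-1$ signatures of the kind required by Theorem~\ref{main step for factorization}. For each $\sigma$, the distinct values $I(\sigma),I'(\sigma)$ leave $n-3$ elements of $\{1,\dots,n-1\}$; writing them in increasing order as $c_3(\sigma)<\dots<c_{n-1}(\sigma)$, set $I_1:=I$, $I_2:=I'$, and $I_u(\sigma):=c_u(\sigma)$ for $3\le u\le n-1$, so that $u\mapsto I_u(\sigma)$ is a bijection onto $\{1,\dots,n-1\}$ at every $\sigma$. Define a second family by $J_1:=J$, $J_2:=J'$ and $J_u:=I_u$ for $3\le u\le n-1$; at $\sigma\ne\sigma_0$ one has $\{J_1(\sigma),J_2(\sigma)\}=\{I(\sigma),I'(\sigma)\}$ and at $\sigma_0$ the pair $(J_1(\sigma_0),J_2(\sigma_0))$ equals $(I_2(\sigma_0),I_1(\sigma_0))$, so $(J_u)_u$ again satisfies the hypothesis of Theorem~\ref{main step for factorization}. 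Applying that theorem to both families gives
\[
p(\Pi)\sim Z(\Pi_\infty)\prod_{u=1}^{n-1}P^{(I_u)}(\Pi),\qquad p(\Pi)\sim Z(\Pi_\infty)\prod_{u=1}^{n-1}P^{(J_u)}(\Pi);
\]
the Whittaker period $p(\Pi)$ and the nonzero constant $Z(\Pi_\infty)$ are the same in both lines, and $I_u=J_u$ for $u\ge 3$, so dividing the two relations yields $P^{(I)}(\Pi)P^{(I')}(\Pi)\sim P^{(J)}(\Pi)P^{(J')}(\Pi)$ up to the compositum of $E(\Pi)$ with the rationality fields of the two auxiliary $GL_{n-1}$-representations $\Pi^{\#}$, $(\Pi^{\#})'$ used in the two applications.

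It remains to eliminate those auxiliary fields, and here I would argue equivariantly as is standard in the paper: the relations in Theorem~\ref{main theorem CM}, in Blasius's theorem, and hence in Theorem~\ref{main step for factorization}, are equivariant under $Aut(\C/F^{Gal})$; the factors indexed by $u\ge 3$ and the pair $(p(\Pi),Z(\Pi_\infty))$ cancel compatibly with the $Aut(\C)$-action; and each $P^{(I)}(\Pi^\sigma)$ depends only on $\sigma\mid E(\Pi)$, where we arrange $E(\Pi)\supseteq F^{Gal}$. Lemma~$1.17$ of \cite{grobnerlin}, recalled in the Introduction, then upgrades the resulting equivariant relation between $P^{(I)}(\Pi)P^{(I')}(\Pi)$ and $P^{(J)}(\Pi)P^{(J')}(\Pi)$ to one valid up to $E(\Pi)^\times$, which is exactly statement (2) of Corollary~\ref{factorization lemma} in the generic case.

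I expect the construction of the two families and the cancellation of $p(\Pi)$ and $Z(\Pi_\infty)$ to be routine; the delicate points are (i) the field-of-rationality bookkeeping — ensuring the auxiliary fields $E(\Pi^{\#})$ genuinely disappear, which is where the equivariance formalism of the Introduction and Section~\ref{cohomology} (and \cite{grobnerlin}) is essential; and (ii) that Theorem~\ref{main step for factorization} only supplies the quadrilateral identity when $I(\sigma)\ne I'(\sigma)$ at every place, so one needs $\#\{1,\dots,n-1\}\ge 3$, i.e.\ $n\ge 4$, in order to invoke Remark~\ref{remark factorization}. This is precisely why the present method does not reach the boundary signatures $s\in\{0,n\}$ and why the full Conjecture~\ref{factorization conjecture} requires the more elaborate construction of Section~$3.4$. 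Finally one must carry along the non-vanishing hypotheses: each use of Theorem~\ref{main step for factorization} requires $\Pi$ to be $3$-regular, or else the relevant central value $L(\tfrac12,\Pi\times\Pi^{\#})$ to be nonzero, and these conditions over the finitely many auxiliary $\Pi^{\#}$ constitute the global non-vanishing condition in the statement.
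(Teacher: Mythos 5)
Your proposal is correct and takes essentially the same route as the paper: realize $I,I'$ as the first two members of a family $(I_u)_{1\le u\le n-1}$ satisfying the hypothesis of Theorem~\ref{main step for factorization}, build the swapped family, apply that theorem twice and divide, then feed the resulting quadrilateral identity into Remark~\ref{remark factorization} and Corollary~\ref{factorization lemma}. You are in fact more careful than the paper's own proof, which silently replaces the relation $\sim_{E(\Pi)E(\Pi^{\#})}$ from Theorem~\ref{main step for factorization} by $\sim_{E(\Pi)}$; your $Aut(\C/F^{Gal})$-equivariance argument, invoking Lemma~$1.17$ of \cite{grobnerlin}, is exactly the right way to justify that elision.
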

\begin{dem}
 For all $\sigma\in\Sigma$, let $I_{1}(\sigma)\neq I_{2}(\sigma)$ be two numbers in $\{1,2,\cdots,n-1\}$. We consider $I_{1},I_{2}$ as two elements in $\{1,2,\cdots,n-1\}^{\Sigma}$.
 
 Let $\sigma_{0}$ be any element in $\Sigma$. We define $I'_{1}, I'_{2}\in \{1,2,\cdots,n-1\}^{\Sigma}$ by $I_{1}'(\sigma):=I_{1}(\sigma)$, $I_{2}'(\sigma):=I_{2}(\sigma)$ if $\sigma\neq \sigma_{0}$ and $I'_{1}(\sigma_{0}):=I_{2}(\sigma_{0})$, $I'_{2}(\sigma_{0}):=I_{1}(\sigma_{0})$.
 
  By Remark \ref{remark factorization}, it is enough to prove that 
  \begin{equation}\nonumber
  P^{(I_{1})}(\Pi)P^{(I_{2})}(\Pi)\sim_{E(\Pi)}P^{(I'_{1})}(\Pi)P^{(I'_{2})}(\Pi).
  \end{equation}

Since $I_{1}(\sigma)\neq I_{2}(\sigma)$  for all $\sigma\in\Sigma$, we can always find $I_{3},\cdots,I_{n-1}\in \{1,2,\cdots,n-1\}^{\Sigma}$ such that for all $\sigma\in\Sigma$, the $(n-1)$ numbers $I_{u}(\sigma)$, $1\leq u\leq n-1$ run over $1,2,\cdots,n-1$. In other words, conditions in Theorem \ref{main step for factorization} are verified.
   
 By Theorem \ref{main step for factorization}, we have 
 \begin{equation}\nonumber p(\Pi)\sim_{E(\Pi)} Z(\Pi_{\infty})  P^{(I_{1})}(\Pi)P^{(I_{2})}(\Pi)\prod\limits_{3\leq u\leq n-1} P^{(I_{u})}(\Pi).
 \end{equation}
 
 On the other hand, it is easy to see that $I'_{1}$, $I'_{2}$, $I_{3},\cdots,I_{n-1}$ also satisfy conditions in Theorem \ref{main step for factorization}. Therefore  \begin{equation}\nonumber
 p(\Pi)\sim_{E(\Pi)} Z(\Pi_{\infty})  P^{(I'_{1})}(\Pi)P^{(I'_{2})}(\Pi)\prod\limits_{3\leq u\leq n-1} P^{(I_{u})}(\Pi).
\end{equation}

We conclude at last $P^{(I_{1})}(\Pi)P^{(I_{2})}(\Pi)\sim_{E(\Pi)}P^{(I'_{1})}(\Pi)P^{(I'_{2})}(\Pi)$ and then the above theorem follows.
 \end{dem}
 
  \begin{cor}\label{Whittaker period local formula}
If $\Pi$ satisfied the conditions in the above theorem then we have:
\begin{equation}
p(\Pi)
\sim_{E(\Pi)} Z(\Pi_{\infty}) \prod\limits_{\sigma\in\Sigma} \prod\limits_{1\leq i\leq n-1} P^{(i)}(\Pi,\sigma)
\end{equation}
\end{cor}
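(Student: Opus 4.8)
The plan is to read off the formula by combining Theorem~\ref{restricted theorem} with one further application of Theorem~\ref{main step for factorization}, choosing the signatures occurring in the latter as efficiently as possible. The point is that Theorem~\ref{main step for factorization} already writes $p(\Pi)$ as $Z(\Pi_{\infty})$ times a product of $n-1$ arithmetic automorphic periods $P^{(I_{u})}(\Pi)$, with $I_{u}\in\{1,\dots,n-1\}^{\Sigma}$, while Theorem~\ref{restricted theorem} factors each such $P^{(I_{u})}(\Pi)$ through the places of $\Sigma$; it then only remains to arrange the $I_{u}$ so that the resulting double product is exactly the one in the statement.

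First I would take $I_{u}$ to be the \emph{constant} map $\sigma\mapsto u$ on $\Sigma$, for $1\le u\le n-1$; these are genuine maps $\Sigma\to\{1,2,\dots,n-1\}$, and for each fixed $\sigma\in\Sigma$ the list $(I_{u}(\sigma))_{1\le u\le n-1}$ is a permutation of $(1,2,\dots,n-1)$, so the hypothesis of Theorem~\ref{main step for factorization} is met. Applying that theorem --- using that $\Pi$ is $3$-regular, or the global non-vanishing hypothesis inherited from Theorem~\ref{restricted theorem}, to ensure the relevant central $L$-value does not vanish --- yields
\[
p(\Pi)\ \sim_{E(\Pi)}\ Z(\Pi_{\infty})\prod_{1\le u\le n-1}P^{(I_{u})}(\Pi).
\]

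Next I would substitute the factorization provided by Theorem~\ref{restricted theorem}: there are non-zero complex numbers $P^{(s)}(\Pi,\sigma)$ with $P^{(I)}(\Pi)\sim_{E(\Pi)}\prod_{\sigma\in\Sigma}P^{(I(\sigma))}(\Pi,\sigma)$ for every $I\in\{1,\dots,n-1\}^{\Sigma}$; for $I=I_{u}$ the constant map $u$ this reads $P^{(I_{u})}(\Pi)\sim_{E(\Pi)}\prod_{\sigma\in\Sigma}P^{(u)}(\Pi,\sigma)$. Plugging this into the previous display and interchanging the two finite products gives
\[
p(\Pi)\ \sim_{E(\Pi)}\ Z(\Pi_{\infty})\prod_{1\le u\le n-1}\prod_{\sigma\in\Sigma}P^{(u)}(\Pi,\sigma)\ =\ Z(\Pi_{\infty})\prod_{\sigma\in\Sigma}\prod_{1\le i\le n-1}P^{(i)}(\Pi,\sigma),
\]
which is the assertion. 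The chain of $\sim_{E(\Pi)}$ relations above is legitimate because every quantity in it --- $p(\Pi)$, the $P^{(I_{u})}(\Pi)$, the $P^{(s)}(\Pi,\sigma)$, and $Z(\Pi_{\infty})$ (the latter well defined up to $E(\Pi)^{\times}$) --- is non-zero, so $\sim_{E(\Pi)}$ is transitive along it.

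I do not expect any real obstacle: once the $I_{u}$ are chosen well this is a formal consequence of the two theorems. The only bookkeeping point worth recording is the field of rationality: the auxiliary representation $\Pi^{\#}$ entering Theorem~\ref{main step for factorization} is built from Hecke characters of $\Pi$-independent infinity type, and although it introduces an a priori factor of $E(\Pi^{\#})$, every term in the final identity is $E(\Pi)$-rational and the whole relation is $Aut(\C/\Q)$-equivariant, so as in Section~$1.1$ (cf.\ Lemma~$1.17$ of~\cite{grobnerlin}) it descends to a relation over $E(\Pi)$ --- exactly as already used in the proof of Theorem~\ref{restricted theorem}.
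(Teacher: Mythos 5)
Your proof is correct and is the intended derivation: the paper states the corollary immediately after Theorem~\ref{restricted theorem} with no separate proof, precisely because it is the formal combination of Theorem~\ref{main step for factorization} and Theorem~\ref{restricted theorem} that you spell out. Choosing the constant signatures $I_{u}\colon\sigma\mapsto u$ is a clean instance of a valid tuple (for each fixed $\sigma$ the values $\{I_{u}(\sigma)\}_{u}$ exhaust $\{1,\dots,n-1\}$), and it makes the bookkeeping with the double product transparent; one could equally observe that once the factorization of Theorem~\ref{restricted theorem} is in hand, $\prod_{u}P^{(I_{u})}(\Pi)\sim_{E(\Pi)}\prod_{\sigma}\prod_{i}P^{(i)}(\Pi,\sigma)$ for \emph{any} admissible choice of $I_{u}$, which is essentially what you note. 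Your remark on descending from $E(\Pi)E(\Pi^{\#})$ to $E(\Pi)$ via equivariance is also the same device the paper uses implicitly in the proof of Theorem~\ref{restricted theorem}.
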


  \subsection{Factorization of arithmetic automorphic periods: complete case}\label{complete case}
In this section, we will prove Conjecture \ref{factorization conjecture} when $\Pi$ is regular enough. More precisely, we have

\begin{thm}\label{complete theorem}
Conjecture \ref{factorization conjecture} is true provided that $\Pi$ is $2$-regular and satisfies a global non vanishing condition which is automatically satisfied if $\Pi$ is $6$-regular.
\end{thm}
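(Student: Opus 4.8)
The plan is to reduce the conjecture to period relations, exactly as in the restricted case, and to furnish the relations that the restricted case misses — those involving the extreme signature values $0$ and $n$ — by running the Whittaker-period machinery of Theorem~\ref{main step for factorization} on two larger general linear groups. First I would apply Corollary~\ref{factorization lemma} to the map $(I(\sigma))_{\sigma\in\Sigma}\mapsto P^{(I)}(\Pi)$ from $\{0,1,\dots,n\}^{\Sigma}$ to $\C^{\times}/E(\Pi)^{\times}$. Since each coordinate set has $n+1\geq 3$ elements, Remark~\ref{remark factorization} shows that it suffices to verify statement~(2) of that corollary for systems of indices that differ in every coordinate; that is, for all $I,I'\in\{0,\dots,n\}^{\Sigma}$ with $I(\sigma)\neq I'(\sigma)$ for every $\sigma$, and every fixed $\sigma_{0}\in\Sigma$, one must show $P^{(I)}(\Pi)\,P^{(I')}(\Pi)\sim_{E(\Pi)}P^{(I^{(1)})}(\Pi)\,P^{(I^{(2)})}(\Pi)$, where $I^{(1)},I^{(2)}$ are obtained from $I,I'$ by interchanging their values at $\sigma_{0}$. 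Theorem~\ref{restricted theorem} already gives this when all of $I,I',I^{(1)},I^{(2)}$ take values in $\{1,\dots,n-1\}^{\Sigma}$, so the only new content is the case in which the value $0$ or $n$ occurs at some place.

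To treat that case I would attach to $\Pi$ two auxiliary representations. Choose conjugate self-dual algebraic Hecke characters $\xi_{1},\xi_{2}$ of $F$ (twisted by $\psi$ when $n$ is odd, as in the odd-$n$ part of the proof of Theorem~\ref{main step for factorization}) whose infinity-type exponents at each $\sigma\in\Sigma$ lie strictly outside the range of the exponents $a_{1}(\sigma)>\dots>a_{n}(\sigma)$ of $\Pi$, one to the left and one to the right. Let $\widetilde{\Pi}$ be the suitably normalised algebraic isobaric sum $\Pi\boxplus\xi_{1}\boxplus\xi_{2}$, a regular non-cuspidal cohomological representation of $GL_{n+2}(\AF)$. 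The two extra characters serve to pad the Hodge numbers of $\Pi$ from outside, so that the two unbounded gaps between the $-a_{i}(\sigma)$ become bounded, interior gaps of $\widetilde{\Pi}$; consequently a signature value $0$ or $n$ for $\Pi$ corresponds, under the natural shift, to an interior value in $\{1,\dots,n+1\}$ for $\widetilde{\Pi}$. Then choose an auxiliary cuspidal representation $\Pi^{\#}$ of $GL_{n+3}(\AF)$ induced from conjugate self-dual algebraic Hecke characters, in good position with $\widetilde{\Pi}$ and with its defining characters interlacing the Hodge numbers of $\widetilde{\Pi}$ one in each of the $n+3$ gaps — precisely the data that lets one run the argument of Theorem~\ref{main step for factorization} one level up. Because $\Pi^{\#}$ is built from Hecke characters, Blasius's Theorem~\ref{blasius} expresses all of its relevant $L$-values, and all Rankin–Selberg $L$-values of its characters against other Hecke characters, in terms of CM periods and powers of $2\pi i$.

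The computation then mirrors the proof of Theorem~\ref{main step for factorization}. Apply Theorem~\ref{Whittaker period theorem CM} to the pair $(\Pi^{\#},\widetilde{\Pi})$ to obtain $L(\tfrac12+m,\Pi^{\#}\times\widetilde{\Pi})\sim p(m,\Pi^{\#}_{\infty},\widetilde{\Pi}_{\infty})\,p(\Pi^{\#})\,p(\widetilde{\Pi})$ for an admissible critical $m$. Expand the left-hand $L$-function along the isobaric decompositions of both factors: the $GL_{n}\times GL_{1}$ cross terms $L(\cdot,\Pi\times\mu)$ are evaluated by Theorem~\ref{n*1} and contribute periods $P^{(I(\Pi,\mu))}(\Pi)$ whose signatures now sweep all of $\{0,1,\dots,n\}$ at each $\sigma$, while the $GL_{1}\times GL_{1}$ cross terms are CM periods by Blasius. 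On the right, $p(\Pi^{\#})$ is again a product of Hecke $L$-values, hence of CM periods, and $p(\widetilde{\Pi})$ is given by the Whittaker-period relation for an isobaric sum in terms of $p(\Pi)$, the Whittaker periods of $\xi_{1},\xi_{2}$, and further $GL_{n}\times GL_{1}$ and $GL_{1}\times GL_{1}$ cross-term $L$-values. Solving for $p(\Pi)$ and checking — exactly as for $Z(\Pi_{\infty})$ in \eqref{definition of Z} — that every residual archimedean quantity depends only on $\Pi_{\infty}$, one gets a relation $p(\Pi)\sim_{E(\Pi)} Z'(\Pi_{\infty})\prod_{u}P^{(J_{u})}(\Pi)^{\pm 1}$ in which the signatures $J_{u}(\sigma)$ realize all of $\{0,1,\dots,n\}$ at each place. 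Comparing this relation, together with the one of Theorem~\ref{main step for factorization}, for two systems of auxiliary data related by a transposition at $\sigma_{0}$ yields the outstanding relations $P^{(I)}(\Pi)P^{(I')}(\Pi)\sim P^{(I^{(1)})}(\Pi)P^{(I^{(2)})}(\Pi)$, so that statement~(2) of Corollary~\ref{factorization lemma} holds in full; the numbers $P^{(s)}(\Pi,\sigma)$ for $0\leq s\leq n$ are then the factors produced by that corollary, with $P^{(0)}(\Pi,\sigma)$ and $P^{(n)}(\Pi,\sigma)$ additionally tied down via Lemma~\ref{lemma inverse} and Corollary~\ref{unitary similitude product 1}.

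The main obstacle is the combinatorial and archimedean bookkeeping needed to make this work under only mild regularity. One must position the exponents of $\xi_{1},\xi_{2}$ and of the characters defining $\Pi^{\#}$ so that $\widetilde{\Pi}$ and $\Pi^{\#}$ are regular, the pair $(\Pi^{\#},\widetilde{\Pi})$ is in good position, and the critical point $m$ in Theorem~\ref{Whittaker period theorem CM} can be chosen where $L(\tfrac12+m,\Pi^{\#}\times\widetilde{\Pi})$ is known to be nonzero: when $\Pi$ is $6$-regular there is enough slack in the interlacing to move $m$ strictly away from the centre, where nonvanishing is automatic, whereas for $\Pi$ merely $2$-regular one is forced to the central value and must impose the stated global nonvanishing hypothesis. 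Equally delicate is ensuring that the archimedean periods $p(m,\cdot,\cdot)$, the Eisenstein Whittaker-period factor attached to $\widetilde{\Pi}$, and the $2\pi i$-powers coming from the CM-period relations all remain functions of $\Pi_{\infty}$ alone, so that they cancel when two choices of auxiliary data are compared; this is handled exactly as in the even/odd-$n$ analysis of Theorem~\ref{main step for factorization}.
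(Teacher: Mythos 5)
Your high-level outline is the right one: reduce to period relations via Corollary~\ref{factorization lemma} and Remark~\ref{remark factorization}, form an isobaric $GL_{n+2}$ representation containing $\Pi$ plus two Hecke characters, pair it via Theorem~\ref{Whittaker period theorem CM} with a cuspidal $GL_{n+3}$ representation, and extract the missing relations by comparing two auxiliary systems related by a transposition at $\sigma_0$. But the specific mechanism you propose has two gaps, and they are not cosmetic.

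First, you take $\xi_1,\xi_2$ to have infinity types lying \emph{strictly outside} the Hodge range of $\Pi$ at every $\sigma$, so that $I(\Pi,\xi_1)$ and $I(\Pi,\xi_2)$ are the constant maps $0$ and $n$. Then the factors $L(1,\Pi\otimes\xi_u)$ in the Whittaker period of $\widetilde\Pi$ only ever produce the two extreme periods $P^{(0,\dots,0)}(\Pi)$ and $P^{(n,\dots,n)}(\Pi)$, and these are fixed under the comparison. This does not give the mixed relations $P^{(I_1)}(\Pi)P^{(I_2)}(\Pi)\sim P^{(I_1')}(\Pi)P^{(I_2')}(\Pi)$ you still owe. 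In the paper the padding characters $\chi_1,\chi_2$ are \emph{chosen to realize the target signatures}, i.e.\ $I(\Pi,\chi_u)=I_u$, and it is the $\chi_u$ — not the $GL_{n+3}$ representation — that one varies by a transposition at $\sigma_0$. Keeping $\Pi^\diamondsuit$ fixed is what makes $L(\tfrac12+m,\Pi^\diamondsuit\times\Pi)$, $p(\Pi^\diamondsuit)$ and all the archimedean factors cancel in the comparison, so one never needs to evaluate them.

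Second, you propose to decompose $L(\cdot,\Pi^\#\times\Pi)$ into ``$GL_n\times GL_1$ cross terms $L(\cdot,\Pi\times\mu_j)$''. That is not available. A cuspidal $\Pi^\#$ obtained by automorphic induction of a Hecke character of a degree-$(n+3)$ extension $L/F$ does not decompose as $\boxplus_j\mu_j$ over $F$; what you get instead is $L(\cdot,\Pi^\#\times\Pi)=L(\cdot,\chi\times BC_{L}(\Pi))$, a period of the base change $BC_L(\Pi)$ over $L$, not a product of $P^{(I(\Pi,\mu_j))}(\Pi)$. If you instead take $\Pi^\#$ to be a genuine isobaric sum over $F$ so that it does decompose, it is no longer cuspidal and Theorem~\ref{Whittaker period theorem CM} no longer applies to $(\Pi^\#,\widetilde\Pi)$ (the larger representation must be cuspidal). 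You also do not address why the CM-period contributions from $L(\cdot,\Pi^\diamondsuit\times\chi_u^c)$, $L(1,\Pi\otimes\chi_u)$ and $L(1,\chi_1\chi_2^c)$ cancel; in the paper this is exactly the content of the auxiliary Lemma that identifies $-2I(\Pi^\diamondsuit,\chi_u^c)(\sigma)+(n+3)$ with $2I(\Pi,\chi_u)(\sigma)-n\pm t(\sigma)$, and without such an identity the comparison step does not produce a clean period relation.

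To repair your argument, keep the skeleton but swap what is fixed and what varies: fix once and for all a cuspidal conjugate self-dual $\Pi^\diamondsuit$ on $GL_{n+3}$ interlacing $\Pi^\#$, choose $\chi_1,\chi_2$ with $I(\Pi,\chi_u)=I_u$, form $\Pi^\#=\Pi\boxplus\chi_1^c\boxplus\chi_2^c$, and compare $(\chi_1,\chi_2)$ against $(\chi_1',\chi_2')$ obtained by transposing infinity types at $\sigma_0$. The extreme signatures for $\Pi$ become interior signatures $\{1,\dots,n+2\}$ for $\Pi^\diamondsuit$, so Theorem~\ref{restricted theorem} applied to $\Pi^\diamondsuit$ (or, alternatively, taking $\Pi^\diamondsuit$ an automorphic induction so its periods are CM periods) shows the $\Pi^\diamondsuit$ side of the final ratio is trivial, which is what delivers the relation for $\Pi$.
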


\begin{proof}

 If $n=1$, Conjecture \ref{factorization conjecture} is known as multiplicity of CM periods (see Proposition \ref{propCM}). We may assume that $n\geq 2$. The set $\{0,1,\cdots,n\}$ has at least $3$ elements and then Remark \ref{remark factorization} can apply.
 
  For all $\sigma\in\Sigma$, let $I_{1}(\sigma)\neq I_{2}(\sigma)$ be two numbers in $\{0,1,\cdots,n\}$. We have $I_{1},I_{2}\in \{0,1,2,\cdots,n\}^{\Sigma}$.
 
 Let $\sigma_{0}$ be any element in $\Sigma$. We define $I'_{1}, I'_{2}\in \{0,1,2,\cdots,n\}^{\Sigma}$ as in the proof of Theorem \ref{restricted theorem}. \\
 
 It remains to show that \begin{equation}\label{factorization equation}
 P^{(I_{1})}(\Pi)P^{(I_{2})}(\Pi)\sim_{E(\Pi)}P^{(I'_{1})}(\Pi)P^{(I'_{2})}(\Pi).
 \end{equation}\\
 
 Let us assume that $n$ is odd at first.
 Since $\Pi$ is $2$-regular, we can find $\chi_{u}$ a conjugate self-dual algebraic Hecke character of $F$ such that $I(\Pi,\chi_{u})=I_{u}$ for $u=1,2$. We denote the infinity type of $\chi_{u}$ at $\sigma\in\Sigma$ by $z^{k_{u}(\sigma)}\overline{z}^{-k_{u}(\sigma)}$, $u=1,2$. We remark that $k_{1}(\sigma)\neq k_{2}(\sigma)$ for all $\sigma$ since $I_{1}(\sigma)\neq I_{2}(\sigma)$.
 
 Let $\Pi^{\#}$ be the Langlands sum of $\Pi$, $\chi_{1}^{c}$ and $\chi_{2}^{c}$. We write the infinity type of $\Pi^{\#}$ at $\sigma\in\Sigma$ by $(z^{b_{i}(\sigma)}\overline{z}^{-b_{i}(\sigma)})_{1\leq i\leq n+2}$ with $b_{1}(\sigma)>b_{2}(\sigma)>\cdots >b_{n+2}(\sigma)$. The set $\{b_{i}(\sigma),1\leq i\leq n+2\}=\{a_{i}(\sigma),1\leq i\leq n\}\cup \{-k_{1}(\sigma),-k_{2}(\sigma)\}$.
 
 Let $\Pi^{\diamondsuit}$ be a cuspidalconjugate self-dual cohomological representation of $GL_{n+3}(\AF)$ with infinity type $(z^{c_{i}(\sigma)}\overline{z}^{-c_{i}(\sigma)})_{1\leq i\leq n+3}$ such that $-c_{n+3}(\sigma)>b_{1}(\sigma)>-c_{n+2}(\sigma)>b_{2}(\sigma)>\cdots >-c_{2}(\sigma)>b_{n+2}(\sigma)>-c_{1}(\sigma)$ for all $\sigma\in\Sigma$. We may assume that $\Pi^{\diamondsuit}$ has definable arithmetic automorphic periods.
 
Proposition \ref{Whittaker period theorem CM} is true for $(\Pi^{\diamondsuit},\Pi^{\#})$. Namely,
\begin{equation}
L(\cfrac{1}{2}+m,\Pi^{\diamondsuit}\times \Pi^{\#})\sim_{E(\Pi^{\diamondsuit})E(\Pi^{\#})} p(\Pi^{\diamondsuit})p(\Pi^{\#})p(m,\Pi^{\diamondsuit}_{\infty},\Pi^{\#}_{\infty}).
\end{equation}

We know \begin{equation}
L(\cfrac{1}{2}+m,\Pi^{\diamondsuit}\times \Pi^{\#})=L(\cfrac{1}{2}+m,\Pi^{\diamondsuit}\times \Pi)L(\cfrac{1}{2}+m,\Pi^{\diamondsuit}\times \chi_{1}^{c})L(\cfrac{1}{2}+m,\Pi^{\diamondsuit}\times \chi_{2}^{c})
\end{equation}

For $u=1$ or $2$, by Theorem \ref{n*1} and the fact that $\chi_{u}$ is conjugate self-dual, we have
\begin{eqnarray}
&L(\cfrac{1}{2}+m,\Pi^{\diamondsuit}\times \chi_{u}) &\nonumber\\
&\sim_{E(\Pi^{\diamondsuit})E(\Pi^{\#})} (2\pi i)^{(\frac{1}{2}+m)d(n+3)} P^{I(\Pi^{\diamondsuit},\chi_{u}^{c})}(\Pi)\prod\limits_{\sigma\in\Sigma} p(\widecheck{\chi_{u}},\sigma)^{-2I(\Pi^{\diamondsuit},\chi_{u}^{c})(\sigma)+(n+3)}.&\nonumber
\end{eqnarray}

Thoerem {Whittaker period theorem CM} implies that
\begin{equation}
p(\Pi^{\#})\sim_{E(\Pi^{\#})} \Omega(\Pi^{\#}_{\infty})p(\Pi)L(1,\Pi\otimes \chi_{1})L(1,\Pi\otimes \chi_{2})L(1,\chi_{1}\chi_{2}^{c})
\end{equation} where $\Omega(\Pi^{\#}_{\infty})$ is a non zero complex numbers depend on $\Pi^{\#}_{\infty}$.
 
 By Theorem \ref{n*1} again, for $u=1,2$, we have \begin{equation}
L(1,\Pi\times \chi_{u}) \sim_{E(\Pi^{\#})} (2\pi i)^{dn} P^{I(\Pi,\chi_{u})}\prod\limits_{\sigma\in\Sigma} p(\widecheck{\chi_{u}},\sigma)^{2I(\Pi,\chi_{u})(\sigma)-n}.
\end{equation}
 
 Moreover, $L(1,\chi_{1}\chi_{2}^{c})\sim_{E(\Pi^{\#})} (2\pi i)^{d}\prod\limits_{\sigma\in\Sigma}p(\widecheck{\chi_{1}},\sigma)^{t(\sigma)}p(\widecheck{\chi_{2}},\sigma)^{-t(\sigma)}$ where $t(\sigma)=1$ if $k_{1}(\sigma)<k_{2}(\sigma)$, $t(\sigma)=-1$ if $k_{1}(\sigma)>k_{2}(\sigma)$.

\begin{lem}
For all $\sigma\in\Sigma$, \begin{eqnarray}\nonumber
-2I(\Pi^{\diamondsuit},\chi_{1}^{c})(\sigma)+(n+3)=2I(\Pi,\chi_{1})(\sigma)-n+t(\sigma),\\\nonumber
-2I(\Pi^{\diamondsuit},\chi_{2}^{c})(\sigma)+(n+3)=2I(\Pi,\chi_{1})(\sigma)-n-t(\sigma).
\end{eqnarray}
\end{lem}
 \begin{proof}[Proof of the lemma:]
 
 By definition we have 
 \begin{equation}\nonumber I(\Pi^{\diamondsuit},\chi_{1}^{c})(\sigma)=\#\{1\leq i\leq n+3\mid -c_{i}(\sigma)>-k_{1}(\sigma)\}.
 \end{equation}

Recall that $-c_{n+3}(\sigma)>b_{1}(\sigma)>-c_{n+2}(\sigma)>b_{2}(\sigma)>\cdots >-c_{2}(\sigma)>b_{n+2}(\sigma)>-c_{1}(\sigma)$ and $\{b_{i}(\sigma),1\leq i\leq n+2\}=\{a_{i}(\sigma),1\leq i\leq n\}\cup \{-k_{1}(\sigma),-k_{2}(\sigma)\}$.

Therefore \begin{eqnarray}\nonumber
I(\Pi^{\diamondsuit},\chi_{1}^{c})(\sigma)&=&\#\{1\leq i\leq n+2\mid b_{i}(\sigma)>-k_{1}(\sigma)\}+1\\ \nonumber &=&
\#\{1\leq i\leq n\mid a_{i}(\sigma)>-k_{1}(\sigma)\}+\mathds{1}_{-k_{2}(\sigma)>-k_{1}(\sigma)}+1.
\end{eqnarray}

By definition we have
\begin{equation}\nonumber
I(\Pi,\chi_{1})(\sigma)=\#\{1\leq i\leq n\mid -a_{i}(\sigma)>k_{1}(\sigma)\}=n-\#\{1\leq i\leq n\mid a_{i}(\sigma)>-k_{1}(\sigma)\}.
\end{equation}

Therefore, $I(\Pi^{\diamondsuit},\chi_{1}^{c})(\sigma)=n-I(\Pi,\chi_{1})(\sigma)+\mathds{1}_{-k_{2}(\sigma)>-k_{1}(\sigma)}+1$. Hence we have
$-2I(\Pi^{\diamondsuit},\chi_{1}^{c})(\sigma))+(n+3)=2I(\Pi,\chi_{1})(\sigma)-n+1-2\mathds{1}_{-k_{2}(\sigma)>-k_{1}(\sigma)}$.

It is easy to verify that $1-2\mathds{1}_{-k_{2}(\sigma)>-k_{1}(\sigma)}=t(\sigma)$. The first statement then follows and the second is similar to the first one.

 \end{proof}

We deduce that if $L(\cfrac{1}{2}+m,\Pi^{\diamondsuit}\times \Pi^{\#})\neq 0$, then \begin{eqnarray}\nonumber\label{factorization complete final step}
&L(\cfrac{1}{2}+m,\Pi^{\diamondsuit}\times \Pi) (2\pi i)^{(1+2m)d(n+3)}P^{I(\Pi^{\diamondsuit},\chi_{1}^{c})}(\Pi^{\diamondsuit})P^{I(\Pi^{\diamondsuit},\chi_{2}^{c})}(\Pi^{\diamondsuit})&\\
&\sim_{E(\Pi^{\diamondsuit})E(\Pi^{\#})}(2\pi i)^{d(2n+1)}p(\Pi^{\diamondsuit})\Omega(\Pi^{\#}_{\infty})p(m,\Pi_{\infty}^{\#},\Pi^{\#}_{\infty})P^{I(\Pi,\chi_{1})}(\Pi)P^{I(\Pi,\chi_{2})}(\Pi).& \nonumber
\end{eqnarray}

Now let $\chi_{1}'$, $\chi_{2}'$ be two conjugate self-dual algebraic Hecke characters of $F$ such that $\chi'_{1,\sigma}=\chi_{1,\sigma}$ and $\chi'_{2,\sigma}=\chi_{2,\sigma}$ for $\sigma\neq \sigma_{0}$, $\chi'_{1,\sigma_{0}}=\chi_{2,\sigma_{0}}$ and $\chi'_{2,\sigma_{0}}=\chi_{1,\sigma_{0}}$.

We take $\Pi^{\#\#}$ as Langlands sum of $\Pi$, $\chi'_{1}\text{}^{c}$ and $\chi'_{2}\text{}^{c}$. Since the infinity type of $\Pi^{\#\#}$ is the same with $\Pi^{\#}$, we can repeat the above process and we see that equation (\ref{factorization complete final step}) is true for $(\Pi^{\diamondsuit},\Pi^{\#\#})$. Observe that most terms remain unchanged. \\

Comparing equation (\ref{factorization complete final step}) for $(\Pi^{\diamondsuit},\Pi^{\#})$ and that for $(\Pi^{\diamondsuit},\Pi^{\#\#})$, we get

\begin{equation}\label{last compare factorization}
\cfrac{P^{I(\Pi^{\diamondsuit},\chi'_{1}\text{}^{c})}(\Pi^{\diamondsuit})P^{I(\Pi^{\diamondsuit},\chi'_{2}\text{}^{c})}(\Pi^{\diamondsuit})}{P^{I(\Pi^{\diamondsuit},\chi_{1}^{c})}(\Pi^{\diamondsuit})P^{I(\Pi^{\diamondsuit},\chi_{2}^{c})}(\Pi^{\diamondsuit})} \sim_{E(\Pi^{\diamondsuit})E(\Pi)}
\cfrac{P^{I(\Pi,\chi'_{1})}(\Pi)P^{I(\Pi,\chi'_{2})}(\Pi)}{P^{I(\Pi,\chi_{1})}(\Pi)P^{I(\Pi,\chi_{2})}(\Pi)}.
\end{equation}

By construction, $I(\Pi,\chi_{u})=I_{u}$ and $I(\Pi,\chi'_{u})=I'_{u}$ for $u=1,2$. Hence to prove (\ref{factorization equation}), it is enough to show the left hand side of the above equation is a number in $E(\Pi^{\diamondsuit})^{\times}$.

There are at least two ways to see this. We observe that $I(\Pi^{\diamondsuit},\chi'_{1}\text{}^{c})(\sigma)=I(\Pi^{\diamondsuit},\chi_{1}\text{}^{c})(\sigma)$, $I(\Pi^{\diamondsuit},\chi'_{2}\text{}^{c})(\sigma)=I(\Pi^{\diamondsuit},\chi_{2}\text{}^{c})(\sigma)$ for $\sigma\neq \sigma_{0}$ and $I(\Pi^{\diamondsuit},\chi'_{1}\text{}^{c})(\sigma_{0})=I(\Pi^{\diamondsuit},\chi_{2}\text{}^{c})(\sigma_{0})$, $I(\Pi^{\diamondsuit},\chi'_{2}\text{}^{c})(\sigma_{0})=I(\Pi^{\diamondsuit},\chi_{1}\text{}^{c})(\sigma_{0})$. Moreover, these numbers are all in $\{1,2,\cdots,(n+3)-1\}$. Theorem \ref{restricted theorem} gives a factorization of the holomorphic arithmetic automorphic periods through each place. In particular, it implies that the left hand side of  (\ref{last compare factorization}) is in $E(\Pi^{\diamondsuit})^{\times}$ as expected.

One can also show this by taking $\Pi^{\diamondsuit}$ an automorphic induction of a Hecke character. We can then calculate $L(\cfrac{1}{2}+m,\Pi^{\diamondsuit}\times\chi_{u}^{c})$ in terms of CM periods. Since the factorization of CM periods is clear, we will also get the expected result.

When $n$ is even, we consider $\Pi^{\#}$ the Langlands sum of $\Pi$, $(\chi_{1}\psi||\cdot||^{-1/2})^{c}$ and $(\chi_{2}\psi||\cdot||^{-1/2})^{c}$ where $\chi_{1}$, $\chi_{2}$ are two suitable algebraic Hecke characters of $F$. We follow the above steps and will get the factorization in this case. We leave the details to the reader.

\end{proof}

\subsection{Specify the factorization}\label{specify the factorization}
\text{}
 Let us assume that Conjecture \ref{factorization conjecture} is true. We want to specify one factorization. \\
 
 We denote by $I_{0}$ the map which sends each $\sigma\in\Sigma$ to $0$. By the last part of Corollary \ref{factorization lemma}, it is enough to choose $c(\Pi,\sigma)\in (\C/E(\Pi))^{\times}$ which is $G_{K}$-equivariant such that $P^{(I_{0})}(\Pi)\sim_{E(\Pi)} \prod\limits_{\sigma\in\Sigma}c(\Pi,\sigma)$. Then there exists a unique factorization of $P^{(\cdot)}(\Pi)$ such that $P^{(0)}(\Pi,\sigma)= c(\Pi,\sigma)$ . We may then define the \textbf{local arithmetic automorphic periods} $P^{(s)}(\Pi,\sigma)$ as an element in $\C^{\times}/ (E(\pi))^{\times}$.
 
In this section, we shall prove $P^{(I_{0})}(\Pi)\sim_{E(\Pi)} p(\widecheck{\xi_{\Pi}},\overline{\Sigma}) \sim_{E(\Pi)} \prod\limits_{\sigma\in\Sigma} p(\widecheck{\xi_{\Pi}},\overline{\sigma})$. Therefore, we may take $c(\Pi,\sigma)=p(\widecheck{\xi_{\Pi}},\overline{\sigma})$.

More generally, we will see that:
\begin{lem}\label{period in compact case}
If $I$ is compact then $P^{(I)}(\Pi)\sim_{E(\Pi)}  \prod\limits_{I(\sigma)=0} p(\widecheck{\xi_{\Pi}},\overline{\sigma})\times \prod\limits_{I(\sigma)=n} p(\widecheck{\xi_{\Pi}},\sigma)$. 
\end{lem}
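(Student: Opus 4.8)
The plan is to exploit that a compact signature makes $Sh_{I}$ zero-dimensional, so that the arithmetic automorphic period collapses to a CM period. If $I(\sigma)\in\{0,n\}$ for every $\sigma\in\Sigma$ then $D=2\sum_{\sigma}r_{\sigma}s_{\sigma}=0$, the longest element $w_{0}\in W^{1}$ equals the identity, and the canonical bundle $\mathbb{K}=\Omega^{D/2}_{Sh_{I}}$ is trivial (in the formula for $\mathbb{K}$ recalled in Section \ref{Complex conjugation}, every weight vanishes because $r_{\sigma}s_{\sigma}=0$ at each place). Hence $\bar H^{0}(Sh_{I},E_{w_{0}*\lambda})^{\pi}=\bar H^{0}(Sh_{I},E_{\lambda})^{\pi}$, and, taking a descent $\pi$ of $\Pi$ to $GU_{I}$ --- which is tempered with $\pi_{\infty}$ the finite-dimensional representation of $GU_{I}(\R)$, a group compact modulo its centre, so that the rank one property of the Definition-Lemma above applies in the guise ``$\pi_{\infty}$ discrete series'' --- this cohomology space is a rank one $E(\pi)\otimes_{K}\C$-module, and $P^{(I)}(\Pi)=P^{(I)}(\pi)$ is by construction the ratio on this line between the de Rham rational structure carried by a $K$-rational $\beta$ and the Betti rational structure carried by $c_{B}\beta$, read off through the rational pairing $\Phi^{\pi}$.

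The second step is to compute this ratio pointwise on $Sh_{I}(\C)$. Since $\dim Sh_{I}=0$ and $h_{I}$ factors through the centre $Z_{GU_{I}}\cong Res_{F/\Q}\mathbb{G}_{m}$, a torus, $Sh_{I}(\C)$ is a finite disjoint union of CM points, and the Hodge cocharacter of $Z_{GU_{I}}$ attached to $h_{I}$ cuts out a CM type $\Psi_{I}$ of $F$, which --- as read off from the case $I=I_{0}$ in the statement --- is $\Psi_{I}=\{\overline{\sigma}:I(\sigma)=0\}\cup\{\sigma:I(\sigma)=n\}$. Restricting $E_{\lambda}$ to such a point, and using the canonical trivialisations of automorphic vector bundles at CM points together with the rational $GU_{I}(\AQf)$-equivariant pairing $E_{\lambda}\otimes\overline{E_{\lambda}}\to E_{\nu}$ of Proposition $2.5.8$ of \cite{harris97} (which is rational at CM points for the canonical trivialisations), the value $\Phi^{\pi}(\beta^{\tau},c_{B}\beta^{\tau})$ becomes, up to $E(\pi)^{\times}$, the CM period of the algebraic Hecke character of $F$ through which $Z_{GU_{I}}$ acts on the relevant one-dimensional line; triviality of $\mathbb{K}$ means no power of $2\pi i$ intervenes, leaving only the datum $(\lambda,\lambda_{0})$, which is exactly the infinity type of $\widecheck{\xi_{\Pi}}$ along $\Psi_{I}$. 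This gives $P^{(I)}(\Pi)\sim_{E(\Pi)}p(\widecheck{\xi_{\Pi}},\Psi_{I})$, and the multiplicativity relation $p(\chi,\Psi_{1}\sqcup\Psi_{2})\sim p(\chi,\Psi_{1})p(\chi,\Psi_{2})$ of Proposition \ref{propCM} splits the right side into $\prod_{I(\sigma)=0}p(\widecheck{\xi_{\Pi}},\overline{\sigma})\,\prod_{I(\sigma)=n}p(\widecheck{\xi_{\Pi}},\sigma)$; the case $I=I_{0}$ is the instance $\Psi_{I_{0}}=\overline{\Sigma}$, so $P^{(I_{0})}(\Pi)\sim_{E(\Pi)}p(\widecheck{\xi_{\Pi}},\overline{\Sigma})$.

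The hard part will be the bookkeeping in the second step: pinning down exactly which Hecke character's CM period is produced by the trivialisation of $E_{\lambda}\otimes\overline{E_{\lambda}}\to E_{\nu}$ at the CM points, i.e.\ matching the action of the CM torus on the fibre --- which mixes the highest weight $\lambda$, the central twist $\lambda_{0}$, and the identification of $\overline{E_{\lambda}}$ with $E_{\lambda^{\vee}}$ up to the (here trivial) twist by $\mathbb{K}$ from Section \ref{Complex conjugation} --- with the infinity type of $\xi_{\Pi}$ at the places of $\Sigma$ and $\overline{\Sigma}$, and correctly tracking the operation $\chi\mapsto\widecheck{\chi}$, just as in Blasius's Theorem \ref{blasius} and in Remark \ref{CM complex conjugation}. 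This is in essence the definite case computation already carried out by Harris \cite{harris97} and by Guerberoff--Lin \cite{guerberofflin}, so I would present it as a specialisation of that machinery rather than redo it from scratch. As consistency checks one verifies compatibility with $P^{(I^{c})}(\Pi^{c})\sim_{E(\Pi)}P^{(I)}(\Pi)$ of Lemma \ref{lemma inverse} together with the relation $p(\chi,\Psi)\sim p(\bar{\chi},\bar{\Psi})$ of Proposition \ref{propCM}; and, when $\Pi$ is an automorphic induction of Hecke characters, the identity also drops out of Theorem \ref{main theorem CM} applied to a character $\eta$ with $I(\Pi,\eta)=I$, using Blasius's evaluation of the resulting abelian $L$-values.
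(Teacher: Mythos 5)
Your strategy is the one the paper uses: recognize that a compact signature makes $Sh_{I}$ zero-dimensional, push the pairing down to the central torus, and identify the resulting ratio of rational structures with a CM period of the central character. The CM type $\Psi_{I}=\{\overline{\sigma}:I(\sigma)=0\}\cup\{\sigma:I(\sigma)=n\}$ you read off from the Hodge cocharacter of $h_{I}$, and the final appeal to the multiplicativity $p(\chi,\Psi_{1}\sqcup\Psi_{2})\sim p(\chi,\Psi_{1})p(\chi,\Psi_{2})$ of Proposition~\ref{propCM}, are exactly right, so the proposal is essentially correct.

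Two details will bite in the bookkeeping you defer. First, the centre $T=Z_{GU_{I}}$ is \emph{not} $\mathrm{Res}_{F/\Q}\mathbb{G}_{m}$: it is the proper subtorus cut out by the similitude condition, with $T(\R)\cong\{(z_{\sigma})\in(\C^{\times})^{\Sigma}:|z_{\sigma}|\text{ constant in }\sigma\}$, so $\xi_{\Pi}^{-1}$ does not literally live on $T$. The paper avoids this by first producing the period $p(Sh(T,h_{T}),\pi^{T})$ via Remark~\ref{CM complex conjugation} (using $(c_{B}\phi)|_{T(\AQ)}=\pm i^{\lambda_{0}}(\phi|_{T(\AQ)})^{-1}$ and the injectivity of restriction-to-centre), and then enlarging to $T^{\#}:=\mathrm{Res}_{K/\Q}T_{K}\cong\mathrm{Res}_{K/\Q}\mathbb{G}_{m}\times\mathrm{Res}_{F/\Q}\mathbb{G}_{m}$, where $\pi^{T,\#}:=\|\cdot\|^{-\lambda_{0}}\times\xi_{\Pi}^{-1}$ restricts to $\pi^{T}$; Proposition~\ref{propgeneral} then transports the CM period across $T\hookrightarrow T^{\#}$. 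Second, your claim that triviality of $\mathbb{K}$ means no power of $2\pi i$ appears is a little too quick: the $T^{\#}$-computation yields a $(2\pi i)^{\lambda_{0}}$ from the $\|\cdot\|^{-\lambda_{0}}$ factor, which only disappears after being cancelled against the normalization $P^{(I)}(\Pi)\sim(2\pi)^{-\lambda_{0}}P^{(I)}(\pi)$ relating the two versions of the period (and an $i^{\lambda_{0}}$ coming from $c_{B}$). Neither point changes the conclusion, but they are precisely the places where the ``specialisation of Harris'' hand-wave needs to be unpacked.
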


This lemma leads to the following theorem:
\begin{thm}\label{special factorization theorem}
If Conjecture \ref{factorization conjecture} is true, in particular, if conditions in Theorem \ref{complete theorem} are satisfied, then there exists some complex numbers $P^{(s)}(\Pi,\sigma)$ unique up to multiplication by elements in $(E(\Pi))^{\times}$ such that the following two conditions are satisfied:
\begin{enumerate}
\item $P^{(I)}(\Pi) \sim_{E(\Pi)} \prod\limits_{\sigma\in\Sigma}P^{(I(\sigma))}(\Pi,\sigma)$ for all $I=(I(\sigma))_{\sigma\in\Sigma}\in \{0,1,\cdots,n\}^{\Sigma}$,
\item  and $P^{(0)}(\Pi,\sigma)\sim_{E(\Pi)} p(\widecheck{\xi_{\Pi}},\overline{\sigma})$
\end{enumerate}
where $\xi_{\Pi}$ is the central character of $\Pi$.

Moreover, we know $P^{(n)}(\Pi,\sigma)\sim_{E(\Pi)} p(\widecheck{\xi_{\Pi}},\sigma)$ or equivalently $P^{(0)}(\Pi,\sigma)\times P^{(n)}(\Pi,\sigma)\sim_{E(\Pi)} 1$.
\end{thm}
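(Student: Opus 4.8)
The plan is to feed the (assumed) Conjecture \ref{factorization conjecture} into the abstract factorization statement Corollary \ref{factorization lemma}, and to use Lemma \ref{period in compact case} to single out one particular factorization. Work throughout in the group $Z=\C^{\times}/E(\Pi)^{\times}$, and let $f\colon\prod_{\sigma\in\Sigma}\{0,1,\cdots,n\}\to Z$ be the map $I\mapsto P^{(I)}(\Pi)$; by Conjecture \ref{factorization conjecture} the equivalent conditions of Corollary \ref{factorization lemma} hold for $f$. I would then apply the last part of that corollary with base point $a_{\sigma}:=0$ for every $\sigma\in\Sigma$ and prescribed values $c_{\sigma}:=p(\widecheck{\xi_{\Pi}},\overline{\sigma})$. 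The compatibility hypothesis $f(a)=\prod_{\sigma}c_{\sigma}$ required by the corollary is exactly the relation $P^{(I_{0})}(\Pi)\sim_{E(\Pi)}\prod_{\sigma\in\Sigma}p(\widecheck{\xi_{\Pi}},\overline{\sigma})$, which is Lemma \ref{period in compact case} applied to the compact signature $I_{0}$. The corollary then produces complex numbers $P^{(s)}(\Pi,\sigma)$, unique as elements of $Z$ (that is, up to $E(\Pi)^{\times}$), satisfying $P^{(I)}(\Pi)\sim_{E(\Pi)}\prod_{\sigma}P^{(I(\sigma))}(\Pi,\sigma)$ for all $I\in\{0,1,\cdots,n\}^{\Sigma}$ and $P^{(0)}(\Pi,\sigma)\sim_{E(\Pi)}p(\widecheck{\xi_{\Pi}},\overline{\sigma})$; this is exactly conditions $(1)$ and $(2)$.

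Next, to compute $P^{(n)}(\Pi,\sigma_{0})$, fix $\sigma_{0}\in\Sigma$ and consider the compact signature $I^{\sigma_{0}}$ with $I^{\sigma_{0}}(\sigma_{0})=n$ and $I^{\sigma_{0}}(\sigma)=0$ for $\sigma\neq\sigma_{0}$. On one hand Lemma \ref{period in compact case} gives $P^{(I^{\sigma_{0}})}(\Pi)\sim_{E(\Pi)}p(\widecheck{\xi_{\Pi}},\sigma_{0})\prod_{\sigma\neq\sigma_{0}}p(\widecheck{\xi_{\Pi}},\overline{\sigma})$; on the other, conditions $(1)$ and $(2)$ give $P^{(I^{\sigma_{0}})}(\Pi)\sim_{E(\Pi)}P^{(n)}(\Pi,\sigma_{0})\prod_{\sigma\neq\sigma_{0}}p(\widecheck{\xi_{\Pi}},\overline{\sigma})$. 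Every period appearing here is nonzero, so cancelling the common factor $\prod_{\sigma\neq\sigma_{0}}p(\widecheck{\xi_{\Pi}},\overline{\sigma})$ yields $P^{(n)}(\Pi,\sigma_{0})\sim_{E(\Pi)}p(\widecheck{\xi_{\Pi}},\sigma_{0})$.

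Finally, given condition $(2)$, the relation $P^{(n)}(\Pi,\sigma)\sim_{E(\Pi)}p(\widecheck{\xi_{\Pi}},\sigma)$ is equivalent to $P^{(0)}(\Pi,\sigma)\cdot P^{(n)}(\Pi,\sigma)\sim_{E(\Pi)}1$ as soon as one knows that $p(\widecheck{\xi_{\Pi}},\sigma)\,p(\widecheck{\xi_{\Pi}},\overline{\sigma})\sim_{E(\Pi)}1$. For this I would argue: since $\Pi$ is conjugate self-dual, its central character satisfies $\xi_{\Pi}^{c}=\xi_{\Pi}^{-1}$, whence $\widecheck{\xi_{\Pi}}$ is again conjugate self-dual; then, exactly as in the chain $p(\widecheck{\chi_{u}},\overline{\sigma})\sim p(\widecheck{\chi_{u}},\sigma)^{-1}$ used in the proof of Theorem \ref{main step for factorization} (which combines the conjugation and multiplicativity relations of Proposition \ref{propCM}), one gets $p(\widecheck{\xi_{\Pi}},\overline{\sigma})\sim_{E(\Pi)}p(\widecheck{\xi_{\Pi}},\sigma)^{-1}$, as needed. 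One should also observe that the whole construction is $G_{K}$-equivariant: $p(\widecheck{\xi_{\Pi}},\overline{\sigma})$ is an equivariant family by the equivariance clauses of Theorem \ref{blasius} and Proposition \ref{propCM}, and each remaining $P^{(s)}(\Pi,\sigma)$ is, by $(1)$, recovered as an equivariant ratio of $P^{(I)}(\Pi)$ (equivariant by hypothesis) and the already-fixed periods $P^{(0)}(\Pi,\sigma')$, $\sigma'\neq\sigma$.

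As for the main obstacle: the real content of the theorem sits in Lemma \ref{period in compact case} and in Conjecture \ref{factorization conjecture} itself, both of which we are permitted to assume, so once these are in hand the argument above is essentially bookkeeping with Corollary \ref{factorization lemma}. The one point that still needs care is the identity $p(\widecheck{\xi_{\Pi}},\sigma)\,p(\widecheck{\xi_{\Pi}},\overline{\sigma})\sim_{E(\Pi)}1$, together with the accompanying issue of making sure the relevant CM periods are well defined (this uses the regularity and conjugate self-duality of $\Pi$, and degenerates to an Artin-period statement when $\widecheck{\xi_{\Pi}}$ happens not to be critical), and in checking that the normalisation chosen through the base point $I_{0}$ is compatible with the $Aut(\C/F^{Gal})$-equivariance of all the families in play.
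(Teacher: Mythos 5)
Your proposal is correct and follows essentially the same route as the paper: apply the last part of Corollary \ref{factorization lemma} with base point $I_{0}$ and prescribed values $c(\Pi,\sigma)=p(\widecheck{\xi_{\Pi}},\overline{\sigma})$, verify the required compatibility $P^{(I_{0})}(\Pi)\sim_{E(\Pi)}\prod_{\sigma}p(\widecheck{\xi_{\Pi}},\overline{\sigma})$ via Lemma \ref{period in compact case}, and then extract $P^{(n)}(\Pi,\sigma)$ from the same lemma applied to compact signatures together with the conjugate self-duality of $\xi_{\Pi}$. The only difference is cosmetic: you make the ``moreover'' step and the equivariance bookkeeping explicit, whereas the paper leaves both to the reader, folding the substance entirely into the proof of Lemma \ref{period in compact case}.
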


\paragraph{Proof of Lemma \ref{period in compact case}:}
Recall that $D/2=\sum\limits_{\sigma\in\Sigma}I_{\sigma}(n-I_{\sigma})=0$ since $I$ is compact.

Let $T$ be the center of $GU_{I}$. We have $$T(\R)\cong \{(z_{\sigma})\in (\C^{\times})^{\Sigma}\mid |z_{\sigma}| \text{ does not depend on }\sigma\}.$$
We define a homomorphism $h_{T}:\Ss(\R) \rightarrow T(\R)$ by sending $z\in \C$ to $((z)_{I(\sigma)=0},(\overline{z})_{I(\sigma)=n})$. \\

Since $I$ is compact, we see that $h_{I}$ is the composition of $h_{T}$ and the embedding $T\hookrightarrow GU_{I}$. We get an inclusion of Shimura varieties:
$Sh_{T}:=Sh(T,h_{T})\hookrightarrow Sh_{I}=Sh(GU_{I},h_{I})$.

Let $\xi$ be a Hecke character of $K$ such that $\Pi^{\vee}\otimes \xi$ descends to $\pi$, a representation of $GU_{I}(\AQ)$, as before. We write $\lambda\in \Lambda(GU_{I})$ the cohomology type of $\pi$. We define $\lambda^{T}:=(\lambda_{0},(\sum\limits_{1\leq i\leq n}\lambda_{i}(\sigma))_{\sigma\in\Sigma})$. Since $\pi$ is irreducible, it acts as scalars when restrict to $T$. This gives $\pi^{T}$, a one dimensional representation of $T(\AQ)$ which is cohomology of type ${\lambda}^{T}$. We denote by $V_{\lambda^{T}}$ the character of $T(\R)$ with highest weight $\lambda^{T}$. 

The automorphic vector bundle $E_{\lambda}$ pulls back to the automorphic vector bundle $[V_{\lambda^{T}}]$ (see \cite{harrisappendix} for notation) on $Sh_{T}$. \\

Let $\beta$ be an element in $\bar{H}^{0}(Sh_{I},E_{\lambda})^{\pi}$. We fix a non zero $E(\pi)$-rational element in $\pi$ and then we can lift $\beta$ to $\phi$, an automorphic form on $GU_{I}(\AQ)$.

There is an isomorphism $H^{0}(Sh_{T},[V_{\lambda^{T}}])\xrightarrow{\sim} \{f\in \mathbb{C}^{\infty}(T(\Q)\backslash T(\AQ),\C\mid f(tt_{\infty}))=\pi^{T}(t_{\infty})f(t), t_{\infty}\in T(\R), t\in T(\AQ)\}$ (c.f. \cite{harrisappendix}). We send $\beta$ to the element in $H^{0}(Sh_{T},[V_{\lambda^{T}}])^{\pi^{T}}$ associated to $\phi|_{T(\AQ)}$.

We then obtain rational morphisms
\begin{eqnarray}
&\bar{H}^{0}(Sh_{I},E_{\lambda})^{\pi} \xrightarrow{\sim} H^{0}(Sh_{T},[V_{\lambda^{T}}])^{\pi^{T}}&\\
\text{ and similarly } &\bar{H}^{0}(Sh_{I},E_{\lambda^{\vee}})^{\pi^{\vee}} \xrightarrow{\sim} H^{0}(Sh_{T},[V_{\lambda^{T,\vee}}])^{\pi^{T,\vee}}.&
\end{eqnarray}

These morphisms are moreover isomorphisms. In fact, since both sides are one dimensional, it is enough to show the above morphisms are injective. Indeed, if $\phi$, a lifting of an element in $\bar{H}^{0}(Sh_{I},E_{\lambda})^{\pi}$, vanishes at the center, in particular, it vanishes at the identity. Hence it vanishes at $GU_{I}(\AQf)$ since it is an automorphic form. We observe that $GU_{I}(\AQf)$ is dense in $GU_{I}(\Q)\backslash GU_{I}(\AQ)$. We know $\phi=0$ as expected.

We are going to calculate the arithmetic automorphic period. Let $\beta$ be rational. We take a rational element $\beta^{\vee}\in \bar{H}^{0}(Sh_{I},E_{\lambda^{\vee}})^{\pi^{\vee}} $ and lift it to an automorphic form $\phi^{\vee}$. We have $ c_{B}(\phi) \sim_{E(\pi)}P^{(I)}(\pi) \phi^{\vee} $ by Lemma \ref{pair to ratio}.

For the torus, by Remark \ref{CM complex conjugation}, we know \begin{equation}\nonumber
\phi^{\vee}|_{T(\AQ)}\sim_{E(\pi)} p(Sh(T,h_{T}),\pi^{T})^{-1}(\phi|_{T(\AQ)})^{-1}.
\end{equation}

Recall that $c_{B}(\phi)=\pm i^{\lambda_{0}}\overline{\phi}||\nu(\cdot)||^{\lambda_{0}}$. Therefore $(c_{B}(\phi))|_{T(\AQ)}=\pm i^{\lambda_{0}}(\phi|_{T(\AQ)})^{-1}$. We then get \begin{equation}\label{pass to torus}
 i^{\lambda_{0}}P^{(I)}(\pi)\sim_{E(\pi)}p(Sh(T,h_{T}),\pi^{T}).
\end{equation}

We now set $T^{\#}:=Res_{K/\Q}T_{K}$. We have $T^{\#} \cong Res_{K/\Q}\mathbb{G}_{m}\times Res_{F/\Q}\mathbb{G}_{m}$. In particular, $T^{\#}(\R)\cong \C^{\times} \times (\R\otimes_{\Q}F)^{\times}\cong \C^{\times}\times (\C^{\times})^{\Sigma}$.

We define $h_{T^{\#}}:\Ss(\R) \rightarrow T^{\#}(\R)$ to be the composition of $h_{T}$ and the natural embedding $T(\R)\rightarrow T^{\#}(\R)$. We know $h_{T^{\#}}$ sends $z\in\C^{\times}$ to $(z\overline{z},(z)_{I(\sigma)=0}, (\overline{z})_{r(\sigma)=0})$. The embedding $(T,h_{T})\rightarrow (T^{\#},h_{T^{\#}})$ is a map between Shimura datum.

We observe that $\pi^{T,\#}:=||\cdot||^{-\lambda_{0}}\times \xi_{\Pi}^{-1}$ is a Hecke character on $T^{\#}$. Its restriction to $T$ is just $\pi^{T}$. By Proposition \ref{propgeneral}, we have $p(Sh(T,h_{T}),\pi^{T})\sim_{E(\pi)}p(Sh(T^{\#},h_{T^{\#}}),\pi^{T^{\#}})$. \\

By the definition of CM period and Proposition \ref{propCM}, we have
 \begin{equation}
p(Sh(T^{\#},h_{T^{\#}}),\pi^{T^{\#}}) \sim_{E(\pi)} (2\pi i)^{\lambda_{0}} \prod\limits_{I(\sigma)=0}p(\xi_{\Pi}^{-1},\sigma) \prod\limits_{I(\sigma)=n}p(\xi_{\Pi}^{-1},\overline{\sigma}).
\end{equation}

Since $\xi_{\Pi}$ is conjugate self-dual, we have $p(\xi_{\Pi}^{-1},\overline{\sigma})\sim_{E(\Pi)} p(\xi_{\Pi},\sigma)$.

By equation (\ref{pass to torus}), we get:
\begin{equation}
 i^{\lambda_{0}}P^{(I)}(\pi)\sim_{E(\pi)}(2\pi i)^{\lambda_{0}} \prod\limits_{I(\sigma)=0}p(\xi_{\Pi}^{-1},\sigma) \prod\limits_{I(\sigma)=n}p(\xi_{\Pi},\sigma).
\end{equation}
Recall that by definition $P^{(I)}(\Pi)\sim_{E(\Pi)} (2\pi)^{-\lambda_{0}} P^{(I)}(\pi)$,  we get finally
\begin{eqnarray}\nonumber
P^{(I)}(\Pi)&\sim_{E(\Pi)}&  \prod\limits_{I(\sigma)=0} p(\xi_{\Pi}^{-1},\sigma)\times \prod\limits_{I(\sigma)=n} p(\xi_{\Pi},\sigma)\\\nonumber
&\sim_{E(\Pi)}&  \prod\limits_{I(\sigma)=0} p(\widecheck{\xi_{\Pi}},\overline{\sigma})\times \prod\limits_{I(\sigma)=n} p(\widecheck{\xi_{\Pi}},\sigma). 
\end{eqnarray} 
The last formula comes from the fact that $\xi_{\Pi}$ is conjugate self-dual.

\begin{flushright}$\Box$\end{flushright}

\begin{rem}\label{remark n=1}
If $n=1$ and $\Pi=\eta$ is a Hecke character, we obtain that:
$
P^{(0)}(\eta,\sigma)\sim_{E(\eta)} p(\widecheck{\eta},\overline{\sigma})
$
and similarly
$
P^{(1)}(\eta,\sigma)\sim_{E(\eta)}p(\widecheck{\eta},\sigma).$
\end{rem}

\footnotesize
{\sc Jie Lin: Institut des hautes \'{e}tudes scientifiques, 35 Route de Chartres, 91440 Bures-sur-Yvette, France.}
\\ {\it E-mail address:} {\tt linjie@ihes.fr}

\bibliography{bibfile}
\bibliographystyle{alpha}

\end{document}